\newtheorem{theorem}{Theorem}[section]
\newtheorem*{theorem*}{Theorem}
\newtheorem*{thma}{Theorem A}
\newtheorem*{thmb}{Theorem B}
\newtheorem{lemma}[theorem]{Lemma}
\newtheorem*{lemma*}{Lemma}
\newtheorem{corollary}[theorem]{Corollary}
\newtheorem*{corollary*}{Corollary}
\newtheorem{proposition}[theorem]{Proposition}
\newtheorem*{proposition*}{Proposition}
\newtheorem{claim}[theorem]{Claim}
\newtheorem*{claim*}{Claim}
\newtheorem*{question}{Open Question}
\theoremstyle{definition}
\newtheorem{definition}[theorem]{Definition}
\newtheorem{remark}[theorem]{Remark}
\newcommand{\vl}{\mathcal L}
\newcommand{\vh}{\mathcal H}
\newcommand{\ind}{\textup{Ind}}
\newcommand{\spec}{\textup{spec}}
\newcommand{\diffeo}{\varPsi}
\newcommand{\supp}{\textup{supp}}
\newcommand{\cl}{\textup{C}\ell}
\newcommand{\bd}{A}
\newcommand{\lcn}{\langle\langle}
\newcommand{\rcn}{\rangle\rangle}
\begin{document}

\title{ A relative higher index theorem, diffeomorphisms and positive scalar curvature}
\author{Zhizhang Xie \and Guoliang Yu\footnote{Guoliang Yu was partially supported by the US National Science Foundation.} }
\date{}

\maketitle
\begin{abstract}
We prove a general relative higher index theorem for complete manifolds with positive scalar curvature towards infinity. We apply this theorem to study Riemannian metrics of positive scalar curvature on manifolds. For every two metrics of positive scalar curvature on a closed manifold and a Galois cover of the manifold, we define a secondary higher index class. Non-vanishing of this higher index class is an obstruction for the two metrics to be in the same connected component of the space of metrics of positive scalar curvature. In the special case where one metric is induced from the other by a diffeomorphism of the manifold, we obtain a formula for computing this higher index class. In particular, it follows that the higher index class lies in the image of the Baum-Connes assembly map.

\end{abstract}

\section{Introduction}
In this paper, we use methods from noncommutative geometry to study problems of positive scalar curvature on manifolds. From the work of Fomenko and Mischenko \cite{MF79}, Kasparov \cite{GK88}, and Connes and Moscovici \cite{CM90}, methods from noncommutative geometry have found many impressive applications towards geometry and topology, in particular, to those related to the Novikov conjecture and the positive scalar curvature problem. The fact that the positive scalar curvature problem is closely related to  the Novikov conjecture (or the Baum-Connes conjecture) was already made apparent by Rosenberg in \cite{JR83}. Block and Weinberger \cite{JBSW99}, and the second author \cite{GY98}\cite{GY00} successfully applied noncommutative geometric methods 
to determine the existence (nonexistence) of positive scalar curvature on certain classes of manifolds. By applying the work of Lott on higher eta invariants (which is noncommutative geometric) \cite{JLott92}, Leichtnam and Piazza studied the connectedness of the space of all Riemannian metric of positive scalar curvature on certain classes of manifolds \cite{ELPP01}. 

One of main tools used in all the studies mentioned above is index theory in the context of noncommutative geometry, often referred as \textit{higher index theory}. The method of applying (classical) index theory to study the positive scalar curvature problem on manifolds goes back to Lichnerowicz. By applying the Atiyah-Singer index theorem \cite{MAIS63},  he showed that a compact spin manifold does not support positive scalar curvature if its $\hat A$-genus is nonzero \cite{AL63}. With a refined version of the Atiyah-Singer index theorem \cite{MAIS71b}, Hitchin showed that half of the exotic spheres in dimension $1$ and $2$ ($\bmod 8$) cannot carry metrics of positive scalar curvature \cite{NH74}. This line of development was pursued further by Gromov and Lawson. In \cite{MGBL83}, they developed a relative index theorem and obtained nonexistence of positive scalar curvature for a large class of (not necessarily compact) manifolds. In \cite{UB95}, Bunke proved a relative higher index theorem and applied it to study problems of positive scalar curvature on manifolds.    

In this paper, we prove a general relative higher index theorem (for both real and complex cases). We apply this theorem to study Riemannian metrics of positive scalar curvature on manifolds. For every two metrics of positive scalar curvature on a closed manifold and a Galois cover of the manifold, there is a naturally defined secondary higher index class. Non-vanishing of this higher index class is an obstruction for the two metrics to be in the same connected component of the space of metrics of positive scalar curvature. In the special case where one metric is induced from the other by a diffeomorphism of the manifold, we obtain a formula for computing this higher index class. In particular, it follows that the higher index class lies in the image of the Baum-Connes assembly map.

 It is essential to allow real $C^\ast$-algebras and their (real) $K$-theory groups when studying problems of positive scalar curvature on manifolds, cf.\cite{MGBL83}\cite{JR83}. In fact, the (real) $K$-theory groups of real $C^\ast$-algebras provide more refined invariants for obstructions of existence of positive scalar curvature. We point out that the proofs in our paper are written in such a way that they apply to both the real and the complex cases. In order to keep the notation simple, we shall only prove the results for the complex case and indicate how to modify the arguments, if needed, for the real case. From now on, unless otherwise specified, all bundles and algebras are defined over $\mathbb C$.

Here is a synopsis of the main results of the paper. Let $X_0$ and $X_1$ be two even dimensional\footnote{In the real case, we assume $\dim X_0 = \dim X_1 \equiv 0 \pmod 8$.} spin manifolds with complete Riemannian metrics of positive scalar curvature (uniformly bounded below) away from compact sets. Assume that we have compact subspaces $K_i\subset X_i$ such that there is an (orientation preserving) isometry $\diffeo: \Omega_0 \to \Omega_1$, where  $\Omega_i \subset X_i - K_i$ is a union of (not necessarily all) connected components of $X_i - K_i$  (see Figure $\ref{fig:ends}$ in Section $\ref{sec:rel}$). We emphasize that the Riemannian metric on $\Omega_i$ may have \textit{nonpositive} scalar curvature on some compact subset.  Let $\mathcal S_i$ be the corresponding spinor bundle over $X_i$. We assume that  $\diffeo$ lifts to a bundle isometry $\widetilde \diffeo: \mathcal S_0|_{\Omega_0} \to \mathcal S_1|_{\Omega_1}$.  Let $(X_i)_\Gamma$ be a $\Gamma$-cover\footnote{All covering spaces considered in this paper are Galois covering spaces, i.e. regular covering spaces.} of $X_i$, where $\Gamma$ is a discrete group. We assume that $\diffeo$ lifts to an isometry on the covers. Let $D_i$ be the associated Dirac operator on $(X_i)_\Gamma$. Then we have $ D_1 = \widetilde \diffeo \circ D_0 \circ \widetilde \diffeo^{-1} $
on $(\Omega_1)_\Gamma$.

Let $N$ be a compact hypersurface in $\Omega\cong \Omega_i$ such that $N$ cuts $X_i$ into two components. We separate off the component that is inside $\Omega_i$ and denote the remaining part of  $X_i$ by $Y_i$ (see Figure $\ref{fig:ends}$ in Section $\ref{sec:rel}$). We obtain $X_2$ by gluing $Y_0$ and $Y_1$ along $N$. Moreover, we glue the spinor bundles over $Y_0$ and  $Y_1$ to get a spinor bundle over $X_2$. All these cutting-pastings lift to the covers, and produce a $\Gamma$-cover $(X_2)_\Gamma$ of $X_2$. Let $D_2$ be the associated Dirac operator on $(X_2)_\Gamma$.  For each $D_i$, we have its higher index class $\ind(D_i) \in K_0(C_r^\ast(\Gamma))$ (resp. $K_0(C_r^\ast(\Gamma; \mathbb R))$ in the real case). We have the following relative higher index theorem (Theorem \ref{thm:rit}).

\begin{thma}
\[ \ind(D_2)  = \ind(D_0) - \ind(D_1). \]

\end{thma}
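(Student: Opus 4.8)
My plan is to show that both sides of the identity are assembled from the same ``local'' index contributions attached to the compact regions where positive scalar curvature degenerates, the minus sign coming from an orientation reversal. First I would normalize the geometry. By the Lichnerowicz formula $D_i^{\,2}=\nabla^\ast\nabla+\kappa_i/4$ and the hypothesis $\kappa_i\geq c>0$ outside a compact set, each $D_i$ is invertible on $(X_i)_\Gamma$ outside $\pi^{-1}(B_i)$, where $B_i\subset X_i$ is a compact set containing $K_i$ together with the compact part of $\Omega_i$ on which $\kappa_i$ may be nonpositive. I would take $N$ far enough out in $\Omega$ that the separated cap $Z_i$ (the component of $X_i\setminus N$ lying inside $\Omega_i$) sits entirely in the region where $\kappa_i\geq c$, so that $B_i\subset Y_i$; after a Gromov--Lawson modification of the metric near $N$ that keeps the scalar curvature positive there and makes the metric a Riemannian product, so that $Y_0$ and $Y_1$ glue to a smooth Riemannian manifold $X_2$, I may further arrange that the product, positively curved collar separating $B_0$ from $B_1$ in $X_2=Y_0\cup_N Y_1$ is as long as I wish. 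Finally I record that for $X_2$ to be a spin manifold with $D_2$ its Dirac operator, the $\mathbb Z/2$-grading of the spinor bundle on the $Y_1$-part must be the \emph{reverse} of the one it carries inside $X_1$; this orientation reversal is the source of the minus sign.

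The technical heart is a localization principle for the higher index. For a $\Gamma$-equivariant Dirac operator $D$ on a complete manifold, invertible outside $\pi^{-1}(B)$ with $D^2\geq c$ there, the class $\ind(D)\in K_0(C^\ast_r\Gamma)$ is represented by the usual idempotent built from $\chi(D)$, where $\chi$ is an odd chopping function with $\widehat{\chi^2-1}$ compactly supported (propagation radius $R$) and $\chi(t)^2=1$ for $|t|\geq c^{1/2}$. Unit propagation speed of the wave operator of $D$, together with invertibility of $D$ away from $\pi^{-1}(B)$, then shows that $\ind(D)$ lies in the image of $K_0\bigl(C^\ast(\mathrm{Pen}_R\pi^{-1}(B))^\Gamma\bigr)$, the $K$-theory of the equivariant Roe algebra of an $R$-neighborhood of the bad set. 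Two consequences I would isolate: \emph{(additivity)} if $B=B'\sqcup B''$ with the two pieces sufficiently far apart, that Roe algebra splits as a direct sum and $\ind(D)$ decomposes accordingly as $\ind_{B'}(D)+\ind_{B''}(D)$; \emph{(locality)} $\ind(D)$, and each localized summand, depends only on a $\Gamma$-equivariant isometry class of a neighborhood of the relevant part of the bad set together with $D$ on it.

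With these in hand the theorem is bookkeeping. Applying additivity to $D_2$ on $(X_2)_\Gamma$ gives $\ind(D_2)=\ind_{B_0}(D_2)+\ind_{B_1}(D_2)$. Near $B_0$ the covering $(X_2)_\Gamma$ is $\Gamma$-equivariantly isometric to $(X_0)_\Gamma$, and $B_0$ is the whole bad set of $D_0$, so locality gives $\ind_{B_0}(D_2)=\ind(D_0)$; near $B_1$, $(X_2)_\Gamma$ is isometric to $(X_1)_\Gamma$ but with reversed spinor grading, so $\ind_{B_1}(D_2)=\ind(D_{\overline{X_1}})=-\ind(D_1)$, reversing the orientation swapping $\mathcal S^+$ and $\mathcal S^-$ and hence negating the graded index. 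Therefore $\ind(D_2)=\ind(D_0)-\ind(D_1)$; the real case ($\dim\equiv 0\bmod 8$) is identical with real Clifford algebras and $KO$-theory. An equivalent, more classical packaging: $X_0\sqcup\overline{X_1}$ and $X_2\sqcup W$ arise by gluing the same four pieces $Y_0,Z_0,\overline{Y_1},\overline{Z_1}$ along $N$ in two ways, where $W=Z_0\cup_N\overline{Z_1}$ is a complete manifold with scalar curvature $\geq c$ everywhere, hence $\ind(D_W)=0$; cut-and-paste invariance of the total index then yields $\ind(D_0)+\ind(D_{\overline{X_1}})=\ind(D_2)+\ind(D_W)$, i.e.\ the claim. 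I expect the \textbf{main obstacle} to be the localization principle of the second paragraph: establishing finite propagation speed for the wave operator of $D$ on the noncompact cover, constructing the equivariant Roe algebras with their support filtration and the identification of their $K$-theory with $K_\ast(C^\ast_r\Gamma)$, and carrying out the excision argument that confines $\ind(D)$ to a neighborhood of the bad set. Once that machinery is in place, additivity, locality, and Theorem~A follow formally.
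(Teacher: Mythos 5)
Your proposal takes a genuinely different route from the paper, and it contains a gap that the paper's own argument is carefully designed to avoid. The paper does not use Roe algebras or a coarse localization/additivity principle. Instead it builds an explicit finite--propagation idempotent for each $D_i$, applies the difference construction $E(\tilde p,\tilde q)$ of Section~\ref{sec:diff}, and shows by inspection of the matrix entries that the contribution from the region beyond $N$ cancels, yielding $\ind(D_0)-\ind(D_1)=\ind(D_2)-\ind(D_3)$ where $X_3$ is the \emph{double of $Y_1$}; finally $\ind(D_3)=0$ is supplied by the invertible double theorem (Theorem~\ref{thm:double}). Your alternative ``cut-and-paste'' packaging with $W=Z_0\cup_N\overline{Z_1}$ is morally closest to this (both $W$ and $X_3$ are doubles), but your primary argument via localization to the bad sets $B_0$ and $B_1$ is structurally different.

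The gap is in the assertion that a ``Gromov--Lawson modification of the metric near $N$ \dots keeps the scalar curvature positive there and makes the metric a Riemannian product.'' A product metric $h+du^2$ on a collar of $N$ has scalar curvature equal to $\kappa(N,h)$; if $N$ does not admit positive scalar curvature (e.g.\ a torus-like hypersurface), then \emph{no} product metric on the collar can be psc, and no codimension-one bending argument will give you one. Consequently, after you make the metric product to glue $Y_0$ and $\overline{Y_1}$ smoothly, the collar region around $N$ may lie inside the bad set of $D_2$, the bad set becomes connected, and your additivity step $\ind(D_2)=\ind_{B_0}(D_2)+\ind_{B_1}(D_2)$ does not apply. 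The same issue undermines your claim $\ind(D_W)=0$ by Lichnerowicz, since $W$ may fail to be psc near $N$. This is precisely what Theorem~\ref{thm:double} is for: it shows the Dirac operator on a double is bounded below \emph{without any curvature hypothesis near the gluing hypersurface}, only psc towards infinity; the proof runs through Green's formula and the Poincar\'e-type estimate of Theorem~\ref{thm:sn}, not Lichnerowicz. If you replace ``$W$ has $\kappa\geq c$ everywhere, hence $\ind(D_W)=0$'' by ``$W$ is a double, hence $\ind(D_W)=0$ by the invertible double theorem,'' your alternative packaging becomes essentially the paper's proof; but you would still need to justify cut-and-paste invariance, which the paper does via the difference construction and finite propagation, not via a coarse Mayer--Vietoris excision.
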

  
The usefulness of the above theorem lies in the fact that the index class on the left hand side is computable in many cases (for example, when $X_2$ is compact), while the index classes on the right hand side are difficult to compute. In the proof of this theorem, we carry out a construction of invertible doubles (Theorem $\ref{thm:double}$). Our construction takes place on manifolds with $C^\ast_r(\Gamma)$-bundles\footnote{For a real or complex $C^\ast$-algebra $\mathcal A$, by a $\mathcal A$-bundle over a manifold $M$, we mean a locally trivial Banach vector bundle over $M$ whose fibers have the structure of finitely generated projective $\mathcal A$-modules.} (resp. $C^\ast_r(\Gamma; \mathbb R)$-bundles ) and  generalizes the invertible double construction for manifolds with classical vector bundles (i.e. $\mathbb C$-vector bundles or $\mathbb R$-vector bundles), cf. \cite[Chapter 9]{BBKW93}. We point out that if the scalar curvature on $\Omega$ is positive everywhere, then our theorem above follows from Bunke's relative higher index theorem \cite[Theorem 1.2]{UB95}. In our theorem, we allow the scalar curvature on $\Omega$ to be nonpositive on a compact subset. In particular, in the case when $X_0$ and $X_1$ are both compact, our theorem applies to any Riemmannian metrics  (possibly with scalar curvature nowhere positive) on $X_0$ and $X_1$. 

 As an application of our relative higher index theorem, we consider a compact odd dimensional\footnote{In the real case, we assume $\dim M \equiv -1 \pmod 8$.} spin manifold $M$ (with a fixed spin structure) which supports positive scalar curvature. Let $M_\Gamma$ an $\Gamma$-cover of $M$, where $\Gamma$ is a discrete group. Let $\diffeo$ be an orientation preserving diffeomorphism $\diffeo: M \to M$ which in addition preserves the spin structure of $M$. Choose $g_0\in \mathcal R^+(M)$. Let $g_1 = (\diffeo^{-1})^\ast g_0$ and $g_t$  a smooth path of Riemannian metrics on $M$ with
\[ 
g_t = \begin{cases}
g_0 \quad \textup{for $t\leq 0$, }\\
g_1 \quad \textup{for $t\geq 1$,} \\
\textup{any smooth homotopy from $g_0$ to $g_1$ for $0\leq t \leq 1$.}
\end{cases}\] 
Then $X = M\times \mathbb R$ endowed with the metric $ h = g_t + (dt)^2$ becomes a complete Riemannian manifold with positive scalar curvature away from a compact set. Let $X_\Gamma = M_\Gamma \times \mathbb R$ and  $D$ the corresponding Dirac operator on $X_\Gamma$. Then we have the higher index class $\ind(D)\in K_0(C_r^\ast(\Gamma))$ (resp.  $\ind(D)\in K_0(C_r^\ast(\Gamma; \mathbb R))$ in the real case).

Assume that $\diffeo$ lifts to a diffeomorphism $\widetilde \diffeo: M_{\Gamma} \to M_\Gamma$. This is always the case when $\Gamma = \pi_1(M)$ the fundamental group of $M$. Notice that $\diffeo$ induces an automorphism $\Gamma \to \Gamma$. Let $\Gamma\rtimes\mathbb Z$ be the semi-direct product with the action of $\mathbb Z$ on $\Gamma$ induced by $\diffeo$. Then $X_\Gamma = M_\Gamma \times \mathbb R$ becomes a  $(\Gamma\rtimes \mathbb Z)$-cover of $M_\diffeo$. Here $M_\diffeo = (M\times [0, 1])/\sim,$
 where $\sim$ is the equivalence relation $(x, 0)\sim (\diffeo(x), 1)$ for $x\in M$. We denote by $D_{\Gamma\rtimes \mathbb Z}$ the Dirac operator on $X_\Gamma$, which defines a higher index class $\ind(D_{\Gamma\rtimes \mathbb Z}) \in K_0(C_r^\ast(\Gamma\rtimes \mathbb Z))$ (resp.  $\ind(D_{\Gamma\rtimes \mathbb Z}) \in K_0(C_r^\ast(\Gamma\rtimes \mathbb Z; \mathbb R))$ in the real case ), cf. \cite[Section 5]{CM90}.  Now let $\iota:  \Gamma \hookrightarrow \Gamma\rtimes \mathbb Z$ 
 be the natural inclusion map, which induces a homomorphism $\iota_\ast : K_0(C_r^\ast(\Gamma)) \to K_0(C_r^\ast(\Gamma\rtimes \mathbb Z)).$
Then we have the following theorem. 	
\begin{thmb} 
 \[  \iota_\ast( \ind(D) ) = \ind(D_{\Gamma\rtimes \mathbb Z})  \]
in $ K_0(C_r^\ast(\Gamma\rtimes \mathbb Z))$ (resp.  $K_0(C_r^\ast(\Gamma\rtimes \mathbb Z; \mathbb R))$ for the real case).
\end{thmb}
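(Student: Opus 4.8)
The plan is to deduce Theorem B from the relative higher index theorem (Theorem \ref{thm:rit}, i.e.\ Theorem A), applied with the discrete group $\Gamma\rtimes\mathbb Z$ in place of $\Gamma$, by exhibiting the mapping torus $M_\diffeo$ as the result of cutting $X=M\times\mathbb R$ open along a hypersurface and re-pasting it using the isometry coming from $\diffeo$.

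First I would fix the two source manifolds for Theorem A. Let $X_0:=X=(M\times\mathbb R, h)$; this is an even-dimensional complete spin manifold with positive scalar curvature, uniformly bounded below, outside $M\times[0,1]$, so Theorem A applies to it. I equip $X_0$ with the $(\Gamma\rtimes\mathbb Z)$-cover \emph{induced} from the $\Gamma$-cover $M_\Gamma\times\mathbb R\to M\times\mathbb R$ along $\iota$; concretely this cover is $\bigsqcup_{n\in\mathbb Z}(M_\Gamma\times\mathbb R)$, with $\mathbb Z\subset\Gamma\rtimes\mathbb Z$ permuting the copies and $\Gamma$ acting inside each. Because a parametrix for $D$ over $C^\ast_r(\Gamma)$ induces one over $C^\ast_r(\Gamma\rtimes\mathbb Z)$ for the Dirac operator $D_0$ of this induced cover, naturality of the higher index under $\iota$ gives $\ind(D_0)=\iota_\ast(\ind(D))$. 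Let $X_1:=(M\times\mathbb R, g_0+(ds)^2)$ be the product cylinder on the fixed positive scalar curvature metric $g_0$, with the analogously induced cover; its Dirac operator is everywhere invertible, so $\ind(D_1)=0$.

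Next I would perform the cut-and-paste. Put $K_0=K_1=M\times[0,1]$, so each $X_i-K_i$ has two ends; let $\Omega_i$ be their union, and let $N=(M\times\{-1\})\sqcup(M\times\{2\})$ consist of one copy of $M$ in each end (Theorem A being additive over the two pieces of $N$). Define $\Phi\colon\Omega_0\to\Omega_1$ to be the identity on the left ends $M\times(-\infty,0)$ — both carry $g_0+dt^2$ — and $(x,t)\mapsto(\diffeo^{-1}(x),t)$ on the right ends, which is an isometry from $g_1+dt^2$ onto $g_0+(ds)^2$ precisely because $g_1=(\diffeo^{-1})^\ast g_0$. Since $\diffeo$ preserves the spin structure, $\Phi$ lifts to a bundle isometry of spinor bundles, and since $\widetilde\diffeo$ \emph{is} the generator of the $\mathbb Z$-part of the $(\Gamma\rtimes\mathbb Z)$-action on the induced cover, $\Phi$ lifts to a $(\Gamma\rtimes\mathbb Z)$-equivariant isometry of the covers. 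Separating off the components of $X_i\setminus N$ lying in $\Omega_i$ leaves $Y_i=M\times[-1,2]$, and $X_2$ is the result of gluing $Y_0$ to $Y_1$ along $N$ via $\Phi$. Tracing a loop shows $X_2$ is the mapping torus $M_\diffeo$, carrying exactly the spinor bundle used to define $D_{\Gamma\rtimes\mathbb Z}$; and, crucially, on covers the gluings along the left copy of $M$ preserve the $\mathbb Z$-grading while those along the right copy shift it by one, so the countably many components of the induced covers of $X_0$ and $X_1$ get threaded into a single \emph{connected} $(\Gamma\rtimes\mathbb Z)$-cover of $X_2$, which is canonically $M_\Gamma\times\mathbb R$ with its $(\Gamma\rtimes\mathbb Z)$-action. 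Hence $D_2=D_{\Gamma\rtimes\mathbb Z}$, and Theorem A's conclusion $\ind(D_2)=\ind(D_0)-\ind(D_1)$ reads $\ind(D_{\Gamma\rtimes\mathbb Z})=\iota_\ast(\ind(D))$, which is the assertion.

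The hard part is the cover-theoretic bookkeeping just described: one must verify carefully that the twist $\widetilde\diffeo$ of $M_\Gamma$ is exactly absorbed by the passage from $\Gamma$ to the semidirect product $\Gamma\rtimes\mathbb Z$, so that $\Phi$ genuinely lifts equivariantly and, after pasting, the disconnected induced covers of $X_0$ and $X_1$ reassemble into the connected cover $M_\Gamma\times\mathbb R\to M_\diffeo$ that computes $\ind(D_{\Gamma\rtimes\mathbb Z})$. The identity $\ind(D_0)=\iota_\ast(\ind(D))$ also needs unwinding the definition of the higher index class for a manifold that is cocompact only away from infinity, but this is routine naturality; and minor orientation conventions (the mapping torus of $\diffeo$ versus that of $\diffeo^{-1}$) are harmless, since in the end both sides of the asserted equality are indices of the Dirac operator on $M_\Gamma\times\mathbb R$ with one fixed spin structure.
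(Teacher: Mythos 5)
Your proposal is correct and follows essentially the same route as the paper's proof of Theorem \ref{thm:hc}: take $X_0 = (M\times\mathbb R, g_t+dt^2)$ and $X_1 = (M\times\mathbb R, g_0+dt^2)$ with the $(\Gamma\rtimes\mathbb Z)$-covers induced along $\iota$, note $\ind(D_0)=\iota_*(\ind(D))$ by naturality and $\ind(D_1)=0$ because $g_0+dt^2$ has uniformly positive scalar curvature everywhere, and apply Theorem \ref{thm:rit} to the cut-and-paste that reassembles the two pieces into $M_\diffeo$ with its $C^*_r(\Gamma\rtimes\mathbb Z)$-bundle $\mathcal W_\diffeo$. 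The paper spells out the construction of $\mathcal W$ via the $\mathbb Z$-action on $\bigsqcup_n X_n$ where you invoke the induced-cover description instead, but these are the same bundle; the cover-theoretic bookkeeping you flag as the delicate point is treated at the same level of detail in the paper.
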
 
 If we assume that the strong Novikov conjecture holds for $\Gamma\rtimes \mathbb Z$, then the above theorem provides a formula to determine when $\iota_\ast( \ind(D) )$ is nonvanishing. Note that the above theorem implies that  $\iota_\ast( \ind(D) )$ lies in the image of the Baum-Connes assembly map $\mu:  K_0^{\Gamma\rtimes \mathbb Z}(\underline{E}(\Gamma\rtimes\mathbb Z))\to K_0(C_r^\ast(\Gamma\rtimes \mathbb Z))$ (or its real analogue \cite{PBMK04}). It remains an open question whether $\ind(D) \in K_0(C_r^\ast(\Gamma)) $  lies in the image of the Baum-Connes assembly map $\mu:  K_0^{\Gamma}(\underline{E}\Gamma)\to K_0(C_r^\ast(\Gamma))$. 

An outline of the paper is as follows. In Section $\ref{sec:pre}$, we review some basic facts in $K$-theory and index theory. In Section $\ref{sec:dirac}$, we discuss some basic properties of Dirac operators (in Hilbert modules over a $C^\ast$-algebra) and construct their higher index classes (with finite propagation property). In Section $\ref{sec:rel}$, we prove a general relative higher index theorem. In Section $\ref{sec:id}$, we carry out an invertible double construction. In Section $\ref{sec:diffeo}$, we apply our relative higher index theorem to study positive scalar curvature problem on manifolds under diffeomorphisms.

\noindent\textbf{Acknowledgements} We want to thank Matthias Lesch and Rufus Willett for many stimulating discussions and helpful comments. We also thank Etienne Blanchard and Ulrich Bunke for helpful comments. 

\section{Preliminaries}\label{sec:pre}
In this section, we review some preliminary facts in $K$-theory and index theory. 
\subsection{Abstract index theory}

Let $\mathcal B$ be a unital $C^\ast$-algebra and  
\[  0\to \mathcal I \to \mathcal B\to \mathcal B/\mathcal I  \to 0 \] a short exact sequence of $C^\ast$-algebras. Then we have the following six-term\footnote{In the real case, the long exact sequence has $24$ terms, due to the fact real $K$-theory has periodicity $8$.} long exact sequence in $K$-theory.

\[ \xymatrix { K_0(\mathcal I) \ar[r] & K_0(\mathcal B) \ar[r] & K_0(\mathcal B/\mathcal I )  \ar[d]^{\ind}\\
K_1(\mathcal B/\mathcal I  ) \ar[u]^{\ind}
  & K_1(\mathcal B) \ar[l] & K_1(\mathcal I) \ar[l]
}
\]

In particular, each invertible element in $u\in \mathcal B/\mathcal I$ defines an element $\ind(u)\in K_0(\mathcal I)$, called the index of $u$.

We recall an explicit construction of the index map (in the even case). Let $u\in \mathcal B$ such that $u$ becomes invertible in $\mathcal B/\mathcal I$. Then there exists $v\in \mathcal B$ such that both $uv -1$ and $vu-1$ are in $\mathcal I$. We define 
\[ w = \begin{pmatrix} 1 & u \\ 0 & 1\end{pmatrix} \begin{pmatrix} 1 & 0 \\ -v & 1\end{pmatrix} \begin{pmatrix} 1 & u  \\ 0 & 1 \end{pmatrix}\begin{pmatrix} 0 & -1\\ 1 & 0 \end{pmatrix}. \]
Notice that $w$ is invertible and a direct computation shows that 
\[  w - \begin{pmatrix} u & 0 \\ 0 & v \end{pmatrix} \in M_2(\mathcal I) = M_2(\mathbb C)\otimes \mathcal I.\]
Consider the idempotent
\[ p = w \begin{pmatrix} 1 & 0 \\ 0 & 0\end{pmatrix} w^{-1} = \begin{pmatrix} uv + uv(1-uv) & (2 + uv)(1-uv) u \\ v(1-uv) & (1-uv)^2\end{pmatrix}. \]
We have  
\[ p - \begin{pmatrix} 1 & 0 \\0 & 0\end{pmatrix} \in M_2(\mathcal I) \]
and 
\[  \ind(u) = [p] - \left[\begin{pmatrix} 1 & 0 \\0 & 0\end{pmatrix} \right] \in K_0(\mathcal I).\]

Now suppose $F = \begin{pmatrix} 0 & u^\ast \\ u & 0\end{pmatrix} \in M_2(\mathcal B)$ is a self-adjoint element such that $F^2  - 1 \in M_2(\mathcal I)$.
In this case, the index of $F$ is defined to be \[ \ind(F) := \ind(u).\]

\subsection{K-theory and almost idempotents}\label{subsec:k}

There are several equivalent ways to define the $K$-theory groups of a $C^\ast$-algebra. In this subsection, we review a slightly non-standard definition, which will be used in the later sections.

In the following discussion, we fix a (sufficiently small) constant $\tau > 0$  once and for all.  
 
\begin{definition}\label{def:close}

Let $\mathcal B$ be a $C^\ast$-algebra and $\mathcal I$ a closed ideal of $\mathcal B$. 
\begin{enumerate}[(a)]

\item  We call an element $ x\in \mathcal B$ is $\tau$-close to $\mathcal I$ if there exists an element $y\in \mathcal I^+$ such that 
\[   \| x- y\| < \min\{ \tau, \|x\|^{-1}\tau\}. \]

\item An element $z \in \mathcal B$ is called a $\tau$-almost idempotent if 
\[  \| z^2 - z\|< \tau.   \]
\end{enumerate}
\end{definition}

Notice that if $z$ is a $\tau$-almost idempotent, then (as long as $\tau$ is sufficiently small) we can choose disjoint open sets $U$ and $V$ such that $\spec(z)\subset U\cup V$ with $0\in U$ and $1\in V$. Set $h = 0 $ on $U$ and $h= 1$ on $V$ and define
\[  p = \frac{1}{2\pi i}\int_{C} \frac{h(\zeta)}{\zeta - z} d\zeta \]
where $C$ is a contour surrounding $\spec(z)$ in $U\cup V$. Then  $p$ is an idempotent and therefore defines a $K$-theory class. 

Now let $p$ be an idempotent in $M_\infty(\mathcal B)$. In general, such an idempotent does not define a $K$-theory class in $K_0(\mathcal I)$. However,  if $p$ is $\tau$-close to $\mathcal I$ (with $\tau$ sufficiently small), then $p$ does uniquely define an element in $K_0(\mathcal I)$. Indeed, choose $q\in M_\infty (\mathcal I^+)$ so that $\| p -q \| < \min\{\tau, \|p\|^{-1}\tau\}$. Since $p^2 - p = 0$, we have 
\[  \| q^2 - q\|  \leq  \| (p-q)p\| + \|q(p-q)\| + \|p-q\| < 4\tau,    \]    
i.e. $q\in M_\infty( \mathcal I^+)$ is a $(4\tau)$-almost idempotent. By the above discussion,  $q$ defines a $K$-theory class in $K_0(\mathcal I^+)$. Let $\pi$ be the  quotient map 
\[  \pi : \mathcal I^+ \to \mathcal I^+/\mathcal I = \mathbb C. \]
and $\pi_\ast: K_0(\mathcal I^+)\to K_0(\mathbb C) = \mathbb Z$ the induced map on $K_0$-groups. If $\pi_\ast([q]) = [n]$, then we have $[q] - [n] \in K_0(\mathcal I)$. We shall still denote this class by $[p] -[n]$ if no confusion arises. 


\begin{remark}
Notice that the class $[p] - [n]\in K_0(\mathcal I)$ does not depend on the choice of $q$. Indeed, if we choose another $q'\in M_\infty (\mathcal I^+)$ such that $\| p -q' \| < \min\{\tau, \|p\|^{-1}\tau\}$, then we have $\| q- q'\|< 2 \min\{\tau, \|p\|^{-1}\tau\}$. It is easy to verify that $[q] = [q']\in K_0(\mathcal I^+)$.    
\end{remark}

\subsection{A difference construction}\label{sec:diff}
In this subsection, we review the difference construction in $K$-theory from \cite[Section 6]{GKGY06}. Let $\mathcal B$ be a $C^\ast$-algebra and $\mathcal I$ be a two-sided closed ideal in $\mathcal B$. For each pair of idempotents $p, q\in \mathcal B$ with $p-q\in \mathcal I$, we shall define a difference element $E(p, q) \in K_0(\mathcal I)$.  

First consider the invertible element
\[ Z(q) = \begin{pmatrix} q & 0 & 1-q & 0 \\ 1-q & 0 & 0 & q \\ 0 & 0 & q  & 1-q \\ 0 & 1 & 0 & 0 \end{pmatrix} \]
whose inverse is 
\[ Z(q)^{-1} = \begin{pmatrix} q & 1-q & 0 & 0 \\ 0 & 0 & 0 & 1 \\ 1-q & 0 & q  & 0 \\ 0 & q & 1-q & 0 \end{pmatrix}. \]
Then we define 
\[ E_0(p, q) = Z(q)^{-1} \begin{pmatrix} p & 0 & 0 & 0 \\ 0 & 1-q & 0 & 0 \\ 0 & 0 & 0  & 0 \\ 0 & 0 & 0 & 0 \end{pmatrix} Z(q). \]
A direct computation shows that 
\begin{align}
E_0(p, q)   & = \begin{pmatrix} 1 + q(p-q) q & 0 & qp(p-q) & 0 \\ 0 & 0 & 0 & 0 \\ (p-q)pq & 0 & (1-q)(p-q)(1-q)  & 0 \\ 0 & 0 & 0 & 0 \end{pmatrix}. \label{eq:diff}
 \end{align}

It follows immediately that $E_0(p, q) \in M_4(\mathcal I^+)$ and  $E_0(p, q) = e$ modulo $M_4(\mathcal I)$, where 
\[  e = \begin{pmatrix} 1 & 0 & 0 & 0 \\ 0 & 0 & 0 & 0 \\ 0 & 0 & 0  & 0 \\ 0 & 0 & 0 & 0 \end{pmatrix}.   \]
\begin{definition} Define
\[ E(p, q) = [E_0(p, q) ] - [e] \in K_0(\mathcal I). \]
\end{definition}

\begin{remark}
In fact, the same construction works when $(p-q)$ is $\tau$-close to $\mathcal I$. In this case,  although $E_0(p, q) -e \notin M_4(\mathcal I) $, the explicit formula $\eqref{eq:diff}$ shows that $E_0(p, q)$ is $\tau$-close to $\mathcal I$ (with a slight modification of the definition of $\tau$-closeness). According to the discussion in the previous subsection,  $E_0(p, q)$ defines a $K$-theory class in $K_0(\mathcal I)$, which we shall still denote by $E(p, q)$.  
\end{remark}

\section{Dirac operators and their higher index classes}\label{sec:dirac}
In this section, we review some basic properties of Dirac operators over Galois covers of complete manifolds and their higher index classes. 

Let $X$ be a complete even dimensional spin manifold. Let $X_\Gamma$ be a $\Gamma$-cover of $X$. We define a flat $C_r^\ast(\Gamma)$-bundle\footnote{In the real case, we consider the bundle $ \mathcal V = X_\Gamma \times_\Gamma C_r^\ast(\Gamma; \mathbb R)$.} $\mathcal V$ on $X$ by
\[ \mathcal V = X_\Gamma \times_\Gamma C_r^\ast(\Gamma),  \]
where $\Gamma$ acts on $X_{\Gamma}$  and $C^\ast_r(\Gamma)$ diagonally. Denote by  $\mathcal S = \mathcal S^+ \oplus \mathcal S^-$ the spinor bundle over $X$ and 
\[  \varepsilon = \begin{pmatrix} 1 & 0 \\ 0 & -1\end{pmatrix} \] 
the grading operator on $\mathcal S$. Then with the flat connection of $\mathcal V$, we can define the Dirac operator 
\[  D_{\mathcal V} : \Gamma^\infty (X, \mathcal S\otimes \mathcal V) \to \Gamma^\infty (X, \mathcal S\otimes \mathcal V) \]
where $\Gamma^\infty(X, \mathcal S\otimes \mathcal V)$ is the space of smooth sections of $\mathcal S\otimes \mathcal V$ over $X$. We will simply write $D$ for $D_{\mathcal V}$ if no ambiguity arises.   With the $\mathbb Z_2$-grading on $\mathcal S$, we have  
\[  D = \begin{pmatrix} 0 & D^-  \\ D^+ & 0 \end{pmatrix}. \] 
Here are some standard properties of this Dirac operator $D$ as a unbounded operator on the $C^\ast_r(\Gamma)$-Hilbert module $\vl^2(X, \mathcal S\otimes \mathcal V)$ the space of $L^2$ sections of $\mathcal S\otimes \mathcal V$ over $X$:
\begin{enumerate}[(a)]
\item $D$ is an essentially self-adjoint operator; 
\item $D^2 \sigma  = 0  \Leftrightarrow D \sigma = 0$, for $\sigma \in \ \vl^2(X, \mathcal S\otimes \mathcal V)$; 
\item if the scalar curvature $\kappa$ of the manifold $X$ is uniformly bounded, then the maximal domain of $D$ on $\vl^2(X, \mathcal S \otimes \mathcal V)$ is exactly the Soblev space $\vh^1(X, \mathcal S\otimes \mathcal V)$; 
\item  $  \displaystyle D^2 = \nabla^\ast\nabla + \frac{\kappa}{4}$,  where $\nabla: \Gamma^\infty (X, \mathcal S\otimes \mathcal V) \to \Gamma^\infty (X, T^\ast X\otimes \mathcal S\otimes \mathcal V)$ is the connection on the bundle $\mathcal S\otimes \mathcal V$ and $\nabla^\ast$ is the adjoint of $\nabla$. 

\end{enumerate}


From now on, let us assume that $X$ has \textit{(strictly) positive scalar curvature towards infinity}. More precisely, there exist a subset $\Omega\subset X$ such that $X - \Omega$ is compact and the Riemannian metric has positive scalar curvature $>k_0$ on $\Omega$, for some positive constant $k_0$. 

In this case, there exists a compactly supported function $\rho \geq 0$ on $X$ such that 
\begin{enumerate}[(i)]
\item $\displaystyle \frac{\kappa}{4} + \rho^2 \geq c_0 > 0$ for some fixed constant $c_0$,
\item $\| [D, \rho] \|$ as small as we want, in particular $ \displaystyle  < \frac{c_0}{2}$.
\end{enumerate}

\begin{lemma}\label{lemma:lb}
 $D_{\mathcal V} + \varepsilon\rho : \vh^1(X, \mathcal S\otimes \mathcal V) \to \vl^2(X, \mathcal S\otimes \mathcal V) $ is bounded below. 
\end{lemma}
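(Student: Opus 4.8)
The plan is to prove the operator inequality $(D_{\vl} + \varepsilon\rho)^2 \geq \tfrac{c_0}{2}$ on a core and then deduce boundedness below on all of $\vh^1$ by taking inner products and passing to a limit by density.

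First I would record the elementary relations among the operators involved: $\varepsilon$ is self-adjoint with $\varepsilon^2 = 1$; $\varepsilon$ anticommutes with $D_{\vl}$ since $D_{\vl}$ is odd for the $\mathbb Z_2$-grading of $\mathcal S$; and $\rho$, being multiplication by a nonnegative scalar function, is self-adjoint and commutes with $\varepsilon$. Expanding on the core $\Gamma^\infty_c(X,\mathcal S\otimes\mathcal V)$ and using these relations,
\[ (D_{\vl}+\varepsilon\rho)^2 = D_{\vl}^2 + (D_{\vl}\varepsilon\rho + \varepsilon\rho D_{\vl}) + (\varepsilon\rho)^2 = D_{\vl}^2 + \rho^2 - \varepsilon[D_{\vl},\rho]. \]
Then I would invoke property (d), the Lichnerowicz formula $D_{\vl}^2 = \nabla^\ast\nabla + \tfrac{\kappa}{4}$ — note that the flatness of $\mathcal V$ is precisely what keeps any twisting-curvature term from appearing — to rewrite the right-hand side as $\nabla^\ast\nabla + \big(\tfrac{\kappa}{4}+\rho^2\big) - \varepsilon[D_{\vl},\rho]$. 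A short computation with the same (anti)commutation relations also shows that $\varepsilon[D_{\vl},\rho]$ is self-adjoint.

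Next comes the quadratic-form estimate, which is the heart of the matter. For $\sigma\in\Gamma^\infty_c(X,\mathcal S\otimes\mathcal V)$ I would take $C_r^\ast(\Gamma)$-valued inner products: $\langle\nabla^\ast\nabla\sigma,\sigma\rangle = \langle\nabla\sigma,\nabla\sigma\rangle$ is a positive element of $C_r^\ast(\Gamma)$; property (i) gives $\langle(\tfrac{\kappa}{4}+\rho^2)\sigma,\sigma\rangle \geq c_0\langle\sigma,\sigma\rangle$; and since $\varepsilon[D_{\vl},\rho]$ is self-adjoint and bounded with $\|[D_{\vl},\rho]\| < \tfrac{c_0}{2}$ by property (ii), we have $\langle\varepsilon[D_{\vl},\rho]\sigma,\sigma\rangle \geq -\tfrac{c_0}{2}\langle\sigma,\sigma\rangle$, all inequalities being between positive elements of $C_r^\ast(\Gamma)$. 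Adding these and using that $a\geq b\geq 0$ in a $C^\ast$-algebra forces $\|a\|\geq\|b\|$,
\[ \|(D_{\vl}+\varepsilon\rho)\sigma\|^2 = \big\|\langle(D_{\vl}+\varepsilon\rho)^2\sigma,\sigma\rangle\big\| \geq \tfrac{c_0}{2}\,\|\sigma\|^2. \]
Finally, since $\rho$ is bounded and $D_{\vl}\colon\vh^1\to\vl^2$ is bounded, $D_{\vl}+\varepsilon\rho$ extends continuously to $\vh^1$, and $\Gamma^\infty_c(X,\mathcal S\otimes\mathcal V)$ is dense there, so the estimate passes to all of $\vh^1$, which is exactly the assertion.

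The only real subtlety — more bookkeeping than obstacle — is to make sure every occurrence of "positive" and "bounded below" is read in the Hilbert-module sense (positivity of the $C_r^\ast(\Gamma)$-valued inner product, and of adjointable operators written as $T = R^\ast R$), so that the classical Bochner--Lichnerowicz computation carries over verbatim; once that framework is fixed, the expansion and the form estimate above are both short.
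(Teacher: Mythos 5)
Your expansion of $(D_{\mathcal V}+\varepsilon\rho)^2$, the sign bookkeeping for $[D_{\mathcal V},\rho]\varepsilon=-\varepsilon[D_{\mathcal V},\rho]$, the use of the Lichnerowicz formula, and the $C^\ast$-module positivity argument all match the paper's route and are correct as far as they go. But there is a genuine gap at the end: you discard the term $\langle\nabla\sigma,\nabla\sigma\rangle$ by using only that it is a positive element, and conclude
\[
\|(D_{\mathcal V}+\varepsilon\rho)\sigma\|_0^2 \;\ge\; \tfrac{c_0}{2}\,\|\sigma\|_0^2,
\]
which is a lower bound only against the $\vl^2$-norm. The lemma, however, asserts that $D_{\mathcal V}+\varepsilon\rho\colon\vh^1\to\vl^2$ is bounded below, i.e.\ $\|(D_{\mathcal V}+\varepsilon\rho)\sigma\|_0\ge c\,\|\sigma\|_{\vh^1}$, and this stronger estimate is exactly what is used immediately after the lemma to conclude that the inverse $(D_{\mathcal V}+\varepsilon\rho)^{-1}\colon\vl^2\to\vh^1$ is bounded. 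Your statement "which is exactly the assertion" is therefore not quite right.

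The fix is small and stays entirely within your own computation: do not throw away the $\langle\nabla\sigma,\nabla\sigma\rangle$ contribution. After the Lichnerowicz step you have, on the core,
\[
\big\langle(D_{\mathcal V}+\varepsilon\rho)^2\sigma,\sigma\big\rangle
=\langle\nabla\sigma,\nabla\sigma\rangle+\big\langle(\tfrac{\kappa}{4}+\rho^2)\sigma,\sigma\big\rangle-\big\langle\varepsilon[D_{\mathcal V},\rho]\sigma,\sigma\big\rangle
\;\ge\;\langle\nabla\sigma,\nabla\sigma\rangle+\tfrac{c_0}{2}\langle\sigma,\sigma\rangle
\;\ge\; c\big(\langle\nabla\sigma,\nabla\sigma\rangle+\langle\sigma,\sigma\rangle\big)
\]
with $c=\min\{1,\tfrac{c_0}{2}\}$ (one may assume $c_0\le 1$ without loss of generality). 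Applying $\,a\ge b\ge 0\Rightarrow\|a\|\ge\|b\|\,$ to the outer two members now gives $\|(D_{\mathcal V}+\varepsilon\rho)\sigma\|_0^2\ge c\,\|\sigma\|_{\vh^1}^2$, and the density/continuity argument you already give then closes the proof. This is precisely the estimate the paper records.
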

\begin{proof}
Indeed, for each $\sigma \in \vh^1(X, \mathcal S\otimes \mathcal V)$, we have
\begin{align*}
 \| (D_{\mathcal V} + \varepsilon \rho)\sigma \|^2_0  & =\| \langle (D +\varepsilon \rho)\sigma, (D + \varepsilon \rho) \sigma\rangle \| \\
 & =  \| \langle (D^2 + [D, \rho] \varepsilon +  \rho^2)\sigma, \sigma\rangle \|\\
 & \geq \| \langle (\nabla^\ast\nabla +\frac{\kappa}{4} + \rho^2)\sigma, \sigma\rangle\| - \frac{c_0}{2}\|\sigma\|_0^2 \\
 & =  \| \langle \nabla\sigma, \nabla\sigma\rangle + \langle (\frac{\kappa}{4} + \rho^2)\sigma, \sigma\rangle  \|  - \frac{c_0}{2}\|\sigma\|_0^2 \\
 & \geq c \| \langle \nabla\sigma, \nabla\sigma\rangle + \langle \sigma, \sigma \rangle \| =  c \|\sigma\|^2_{\vh^1}
\end{align*}
where $\|\cdot \|_0$ denotes the $L^2$-norm on $\vl^2(X, \mathcal S\otimes \mathcal V) $  and $   c =  \min\{1/2 , c_0/2 \} > 0.$ We have used the fact that $ a \geq b \geq 0\Rightarrow \|a\|\geq \|b\| $ in a $C^\ast$-algebra.

\end{proof} 
It follows that $D_{\mathcal V}+ \varepsilon \rho$ has a bounded inverse 
\[ (D_{\mathcal V} +\varepsilon \rho)^{-1} : \vl^2(X, \mathcal S\otimes \mathcal V) \to \vh^1(X, \mathcal S\otimes \mathcal V). \]

\subsection{Generalized Fredholm operators in Hilbert modules}

Let $\mathcal A$ be a $C^\ast$-algebra and $\vh_{\mathcal A}$ a Hilbert module over $\mathcal A$. We denote the space of all adjointable operators in $\vh_{\mathcal A}$ by $\mathcal B (\vh_{\mathcal A})$. Let $\mathcal K(\vh_{\mathcal A})$ be the space of all compact adjointable operators in $\vh_{\mathcal A}$. Note that $\mathcal K(\vh_{\mathcal A})$ is a two-sided ideal in $\mathcal B(\vh_{\mathcal A})$. 

\begin{definition}(cf. \cite[Chapter~17]{WO93}) 
An operator $F\in \mathcal B(\vh_{\mathcal A})$ is called a generalized Fredholm operator if $\pi(F)\in \mathcal B(\vh_{\mathcal A})/\mathcal K(\vh_{\mathcal A})$ is invertible.
\end{definition}

\begin{proposition}\label{prop:fred}
Let 
\[ F = D_{\mathcal V}(D_{\mathcal V}^2 + \rho^2 + [D_{\mathcal V}, \rho]\varepsilon)^{-1/2} :  \vl^2(X, \mathcal S\otimes \mathcal V) \to  \vl^2(X, \mathcal S\otimes \mathcal V). \] Then $F$ is a generalized Fredholm operator.  
\end{proposition}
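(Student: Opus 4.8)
\medskip
\noindent\emph{Proof proposal.}
The plan is to exhibit $F$ as a unitary plus a $C^\ast_r(\Gamma)$-compact operator. Write $T := D_{\vl} + \varepsilon\rho$ and $B := T^\ast T$. The inner-product computation in the proof of Lemma~\ref{lemma:lb} is precisely the identity $B = D_{\vl}^2 + \rho^2 + [D_{\vl},\rho]\varepsilon$, so the operator in the proposition is $F = D_{\vl}\,B^{-1/2}$. By Lemma~\ref{lemma:lb} and the discussion following it, $T$ is a bounded isomorphism $\vh^1(X,\mathcal S\otimes\mathcal V)\to\vl^2(X,\mathcal S\otimes\mathcal V)$ (bounded below with bounded inverse by that lemma, and bounded since $D_{\vl}$ has order one). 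Hence $B = |T|^2$ is a positive, invertible regular operator on $\vl^2(X,\mathcal S\otimes\mathcal V)$, and $B^{-1/2} = |T|^{-1}$ is a bounded positive adjointable operator. Moreover $\|B^{1/2}\sigma\|_0 = \|T\sigma\|_0$ is equivalent to $\|\sigma\|_{\vh^1}$, so $B^{-1/2}$ maps $\vl^2(X,\mathcal S\otimes\mathcal V)$ boundedly into $\vh^1(X,\mathcal S\otimes\mathcal V)$.

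Since $T$ is invertible it has a polar decomposition $T = u\,|T| = u\,B^{1/2}$ with $u = T B^{-1/2}$ a unitary in $\mathcal B(\vl^2(X,\mathcal S\otimes\mathcal V))$. Therefore
\[
F \;=\; D_{\vl}B^{-1/2} \;=\; (T-\varepsilon\rho)B^{-1/2} \;=\; u-\varepsilon\rho\,B^{-1/2},
\]
which in particular exhibits $F$ as a bounded adjointable operator. Thus it suffices to show that $\varepsilon\rho\,B^{-1/2}$ — equivalently $\rho\,B^{-1/2}$, since $\varepsilon$ is a unitary — lies in $\mathcal K(\vl^2(X,\mathcal S\otimes\mathcal V))$: granting this, $\pi(F) = \pi(u)$ is a unitary in the generalized Calkin algebra $\mathcal B(\vl^2(X,\mathcal S\otimes\mathcal V))/\mathcal K(\vl^2(X,\mathcal S\otimes\mathcal V))$, hence invertible, and $F$ is a generalized Fredholm operator.

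To prove $\rho\,B^{-1/2}\in\mathcal K(\vl^2(X,\mathcal S\otimes\mathcal V))$, note that $\rho$ is smooth with compact support $K:=\supp\rho$, so multiplication by $\rho$ is bounded on $\vh^1(X,\mathcal S\otimes\mathcal V)$ with range contained in the subspace $\vh^1_K$ of sections supported in $K$. Hence $\rho\,B^{-1/2}$ factors as
\[
\vl^2(X,\mathcal S\otimes\mathcal V)\ \xrightarrow{\ B^{-1/2}\ }\ \vh^1(X,\mathcal S\otimes\mathcal V)\ \xrightarrow{\ \rho\ }\ \vh^1_K\ \hookrightarrow\ \vl^2(X,\mathcal S\otimes\mathcal V).
\]
The last inclusion is $C^\ast_r(\Gamma)$-compact by the Rellich lemma for Hilbert-module Sobolev spaces over a Galois cover: over the relatively compact region $K$ one trivializes $\mathcal V$ locally and reduces to the classical compact Sobolev embedding tensored with $C^\ast_r(\Gamma)$ (part of the Mishchenko--Fomenko calculus of elliptic operators over $C^\ast$-algebras). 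Since the compact adjointable operators form an ideal, the composite $\rho\,B^{-1/2}$ is compact, which completes the argument.

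The one genuinely analytic ingredient is this Rellich compactness statement — that an order $-1$ (pseudo)differential operator on $X$, localized to a compact subset, induces a $\Gamma$-compact operator on $\vl^2(X,\mathcal S\otimes\mathcal V)$; everything else is formal manipulation of the polar decomposition of the invertible operator $D_{\vl}+\varepsilon\rho$ together with the elliptic estimate already packaged in Lemma~\ref{lemma:lb}. The same argument applies verbatim in the real case, with $C^\ast_r(\Gamma;\mathbb R)$ in place of $C^\ast_r(\Gamma)$.
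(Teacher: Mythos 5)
Your proof is correct, and it takes a genuinely different route from the paper's. You observe that $T := D_{\mathcal V} + \varepsilon\rho$ is a self-adjoint regular operator with $T^2 = D_{\mathcal V}^2 + \rho^2 + [D_{\mathcal V},\rho]\varepsilon = B$, so $B^{1/2} = |T|$; since $T$ is invertible (Lemma~\ref{lemma:lb}) its polar decomposition $T = uB^{1/2}$ has $u$ unitary, and then $F = DB^{-1/2} = u - \varepsilon\rho\,B^{-1/2}$ is a unitary plus an operator you show is $\Gamma$-compact by a Rellich argument: $B^{-1/2}$ is bounded $\vl^2\to\vh^1$ by the elliptic estimate inside Lemma~\ref{lemma:lb}, multiplication by $\rho$ then lands in sections supported in the compact set $\supp\rho$, and the resulting inclusion into $\vl^2$ is compact by the module-theoretic Sobolev embedding. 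The paper instead proves $F^2-1$ is compact directly, writing $T^{-1}$ via the integral formula $\int_0^\infty((D+\varepsilon\rho)^2+\lambda^2)^{-1}\,d\lambda$, commuting $D$ through the resolvent, and reducing to compactness of $(\rho^2 + [D,\rho]\varepsilon)T^{-2}$ and $DKT^{-2}$, which again rests on the compact support of $\rho$ and $[D,\rho]$. Both arguments reduce to the same analytic kernel — compactness of negative-order operators localized to a compact subset — but your polar-decomposition route dispenses with the integral/commutator bookkeeping and also gets boundedness of $F$ for free, whereas the paper establishes boundedness separately. One small thing to keep in mind if you wanted to rely on this later: the paper's formulation, by exhibiting $F^2-1$ explicitly, also feeds into the finite-propagation constructions in Section~\ref{sec:hic}; the polar decomposition $u$ does not have finite propagation, so while your proof of the Fredholm property is cleaner, the representative the paper produces is chosen with an eye toward that later use.
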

\begin{proof}
Let  $D = D_{\mathcal V}$ and 
\[ T = ( D_{\mathcal V}^2 + \rho^2 + [D_{\mathcal V}, \rho]\varepsilon)^{-1/2} = \sqrt {\frac{1}{(D_{\mathcal V} + \varepsilon \rho)^2}}. \]
First, we show that $F : \vl^2(X, \mathcal S\otimes \mathcal V)\to \vl^2(X, \mathcal S\otimes \mathcal V)$  is bounded. Indeed, 
\begin{align*}
\|F \sigma\|_0^2 &= \|\langle DT\sigma,  DT \sigma \rangle \| \\
& = \|\langle (TD^2T \sigma,   \sigma \rangle \|\\
& = \|\left\langle \sigma - T(\rho^2 + [D, \rho]\varepsilon) T \sigma, \ \sigma \right\rangle \|\\
& \leq \| 1 - R\| \|\sigma\|_0^2
\end{align*}
where $ R= T(\rho^2 + [D, \rho]\varepsilon) T:  \vl^2(X, \mathcal S\otimes \mathcal V) \to  \vl^2(X, \mathcal S\otimes \mathcal V) $ is compact, in particular bounded. 

To show that $F$ is a generalized Fredholm operator, it suffices to show that $F^2 - 1$ is compact, i.e. $F^2 - 1 \in \mathcal K(\mathcal H_{C_r^\ast(\Gamma)})$. Since 
\[ \frac{1}{\sqrt x} = \int_0^\infty \frac{1}{x^2 + \lambda^2}  d\lambda, \]
we have 
\[ T =  \sqrt {\frac{1}{(D + \varepsilon \rho)^2}} = \int_0^\infty ((D + \varepsilon\rho)^2 + \lambda^2)^{-1} d\lambda. \]
Now notice that 
\begin{align*}
 &((D + \varepsilon\rho)^2+ \lambda^2)^{-1} D - D ((D + \varepsilon\rho)^2 + \lambda^2)^{-1} \\
  & = ((D + \varepsilon\rho)^2 + \lambda^2)^{-1}  [D, \rho^2 + [D, \rho]\varepsilon ] ((D + \varepsilon\rho)^2 + \lambda^2)^{-1}.
\end{align*}
It follows that 
\begin{align*}
F^2 & = DTDT \\
& = D \left(\int_0^\infty ((D + \varepsilon\rho)^2 + \lambda^2)^{-1} d\lambda \right) D T\\
& = \left( D^2 \int_0^\infty ((D + \varepsilon\rho)^2 + \lambda^2)^{-1} d\lambda +  D \int_0^\infty K(\lambda) d\lambda \right) T \\
& = D^2 T^2 + D K T	\\
& = 1 - ( \rho^2 + [D, \rho]\varepsilon) T^2 + D K T^2
\end{align*} 
where 
\[ K(\lambda) =((D + \varepsilon\rho)^2 + \lambda^2)^{-1}  [D, \rho^2 + [D, \rho]\varepsilon ] ((D + \varepsilon\rho)^2 + \lambda^2)^{-1}\]
 and  $K = \int_0^\infty K(\lambda) d\lambda$. Since $\rho$ and $[D, \rho]$ have compact supports, it follows that   $( \rho^2 + [D, \rho]\varepsilon) T^2 $ and $D K T^2$ are both compact. This finishes the proof.

\end{proof}

\subsection{Finite propagation speed}\label{subsec:fps}

In this subsection, we shall show that for any first order essentially selfadjoint differential operator $D : \Gamma^\infty (X, \mathcal S\otimes \mathcal V) \to \Gamma^\infty (X, \mathcal S\otimes \mathcal V)$, if the propagation speed of $D$ is finite, that is, 
\[ c_D = \sup\{ \|\sigma_D(x, \xi)\|: x\in X, \xi\in T_x^\ast X, \|\xi\| = 1 \} < \infty, \]
then the unitary operators $e^{isD}$ satisfy the following finite propagation property. The results in this subsection are straightforward generalizations of their corresponding classical results. We refer the reader to \cite[Section~10.3]{NH-JR00} for detailed proofs.  

In the rest of this subsection, let us fix a closed  (not necessarily compact) subset $K\subset X$. We denote  
 \[   Z_\beta  = Z_\beta(K) = \{ x\in X\mid d(x, K) < 2\beta \} \]
 where $d(x, K)$ is the distance of $x$ from $K$ and $\beta > 0$ is some fixed constant.

\begin{proposition}\label{prop:fps}
Let 
\[ D :\Gamma^\infty (X, \mathcal S\otimes \mathcal V) \to \Gamma^\infty (X, \mathcal S\otimes \mathcal V)\]
 be a first order essentially self-adjoint differential operator on a complete Riemannian manifold $X$. Suppose $D$ has finite propagation speed $c_D < \infty$. Then for all $\sigma\in \Gamma^\infty (X, \mathcal S\otimes \mathcal V) $ supported within $K$,  we have    $e^{isD} \sigma$ supported in $Z_\beta(K)$, for all $s$ with $|s| < \beta c_D^{-1}$.
  
\end{proposition}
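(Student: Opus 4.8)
The plan is to run the classical finite--propagation--speed estimate for the wave equation $\partial_s u = iDu$, with the scalar $L^2$ energy replaced by a $C^\ast_r(\Gamma)$--valued one. First I would reduce the proposition to the sharper claim that $e^{isD}\sigma$ is supported in $\{x\in X : d(x,K)\le c_D|s|\}$; since $c_D|s| < \beta < 2\beta$ whenever $|s| < \beta c_D^{-1}$, this set lies in $Z_\beta(K)$. Put $u(s) = e^{isD}\sigma$. By Stone's theorem $u$ is a $C^1$ path in $\mathrm{dom}(D)$ with $\partial_s u = iDu$, and since $\partial_s^2 u = -D^2 u$ the section $u$ solves $(\partial_s^2 + D^2)u = 0$, an equation elliptic in the space--time variable $(s,x)$; hence $u$ is smooth in $(s,x)$ with values in $\mathcal S\otimes\mathcal V$. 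It is enough to treat $s\ge 0$, the case $s\le 0$ following on replacing $D$ by $-D$.

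The heart of the argument is an energy estimate. Fix $c > c_D$ and $a > 0$, and choose (mollifying $d(\cdot,K)$ as in the classical case) a smooth function $\varphi\ge 0$ on $X$ that vanishes in a neighborhood of $K$, satisfies $\|\mathrm{grad}\,\varphi\|\le 1$ pointwise, and bounds $d(\cdot,K)$ from below up to a fixed error. For $s\ge 0$ put
\[ N(s) = \int_{\{\varphi\,\ge\,a+cs\}} \big\langle u(s,x),\, u(s,x)\big\rangle \, d\mathrm{vol}(x)\ \in\ C^\ast_r(\Gamma)_+ . \]
Because $\sigma$ vanishes off $K$ and $\varphi = 0$ near $K$, we have $N(0) = 0$. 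Differentiating, the transport term $\int_{\{\varphi\ge a+cs\}}\big(\langle iDu,u\rangle + \langle u,iDu\rangle\big)\,d\mathrm{vol}$ equals, by Green's formula for the symmetric first--order operator $D$, the boundary integral $\int_{\{\varphi = a+cs\}} \langle \beta(\nu)u, u\rangle\, dS$, where $\nu$ is the unit conormal to the level set and $\beta(\nu)$ is a self-adjoint bundle endomorphism built from $\sigma_D(x,\nu)$, of norm $\le c_D$; hence $\beta(\nu)\le c_D$ as an endomorphism and $\langle\beta(\nu)u,u\rangle\le c_D\langle u,u\rangle$ pointwise in $C^\ast_r(\Gamma)$. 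The shrinking of the domain contributes $-c\int_{\{\varphi = a+cs\}}\langle u,u\rangle\,dS$. Adding up, $\frac{d}{ds}N(s) \le (c_D - c)\int_{\{\varphi = a+cs\}}\langle u,u\rangle\,dS \le 0$. Since $N(0) = 0$ and $N(s)\ge 0$, this forces $N(s)\equiv 0$, so $u(s)$ vanishes on $\{\varphi\ge a+cs\}$; letting $a\to 0^+$ and the mollification scale tend to $0$, then $c\to c_D^+$, we conclude $u(s)$ is supported in $\{d(\cdot,K)\le c_D s\}$, as wanted.

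The points needing (routine) care, which I would write out, are: (1) the regularity justifying differentiation under the integral sign and Green's formula, supplied by the space--time ellipticity of $\partial_s^2 + D^2$ together with the smooth mollification of the Lipschitz function $d(\cdot,K)$; and (2) the fact that everything above is legitimate for $C^\ast_r(\Gamma)$--valued densities on a Hilbert module --- integrals of norm--continuous $C^\ast_r(\Gamma)_+$--valued functions over measurable regions, their differentiability in $s$, the pointwise operator inequality $\pm\, i\,\sigma_D(x,\nu)\le c_D$ between bundle endomorphisms, and the order--monotonicity conclusion ``$N'\le 0,\ N\ge 0,\ N(0)=0 \Rightarrow N\equiv 0$'' in $C^\ast_r(\Gamma)_{\mathrm{sa}}$. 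I do not anticipate an essential obstacle: the proof is a line--by--line transcription of the scalar argument (see \cite[Section~10.3]{NH-JR00}), and the only mildly delicate bookkeeping is carrying the $C^\ast$--algebra order simultaneously through Green's formula and the monotonicity step.
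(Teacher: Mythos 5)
Your proposal is essentially the paper's own approach: the authors simply cite \cite[Section~10.3]{NH-JR00}, whose proof of finite propagation is exactly the hyperbolic energy estimate you run, and the adaptation to the $C^\ast_r(\Gamma)$--Hilbert--module setting is the ``straightforward generalization'' the paper asserts. Your treatment of the $C^\ast$--valued order bookkeeping is correct: positivity of $\langle u,u\rangle$ and the cone being closed under integration give that $N'\le 0$, $N(0)=0$, $N\ge 0$ force $N\equiv 0$, and the pointwise symbol bound $\pm i\,\sigma_D(x,\nu)\le c_D\cdot\mathrm{id}$ transfers verbatim from the scalar case.

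One step deserves a flag. You justify space--time smoothness of $u(s)=e^{isD}\sigma$ via elliptic regularity for $\partial_s^2+D^2$; but $D^2$ is elliptic only when $D$ is, and the proposition as stated assumes merely that $D$ is a first-order essentially self-adjoint operator with finite propagation speed, not that it is elliptic. For a general such $D$, the ellipticity of $\partial_s^2+D^2$ fails (take $D=-i\partial_{x_1}$ on $\mathbb{R}^2$), so the regularity should instead be obtained from symmetric-hyperbolic existence/regularity theory for the Cauchy problem $\partial_s u = iDu$, or, as in Higson--Roe, by working with a weak formulation of the energy identity for $u$ ranging over a dense core of $D$ and then extending by density. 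In the paper's actual application $D$ is of Dirac type, so $D^2$ is elliptic and your shortcut does apply; but as written the regularity argument is narrower than the statement being proved. The remaining ``routine care'' items you list (mollifying $d(\cdot,K)$, coarea/Green's identities, differentiation under the integral) are genuinely routine and your outline of them is fine.
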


\begin{corollary}\label{cor:fp}
Let $\varphi$ be a bounded Borel function on $\mathbb R$ whose Fourier transform is supported in $(-\beta c_D^{-1}, \beta c_D^{-1})$. If $\sigma \in \Gamma^\infty (X, \mathcal S\otimes \mathcal V)$ is supported in $ K$, then $\varphi(D ) \sigma$ is supported in $Z_\beta(K)$.     
\end{corollary}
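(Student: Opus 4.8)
The plan is to represent $\varphi(D)$ through the Fourier inversion formula in terms of the unitary group $\{e^{isD}\}$ and then feed each $e^{isD}\sigma$ into the finite propagation estimate of Proposition \ref{prop:fps}. Write $\psi = \hat\varphi$ for the (a priori distributional) Fourier transform of $\varphi$; by hypothesis $\supp \psi \subset (-\beta c_D^{-1}, \beta c_D^{-1})$. When $\psi \in C_c^\infty(\mathbb R)$ one has the operator identity
\[ \varphi(D) = \frac{1}{2\pi}\int_{\mathbb R} \psi(s)\, e^{isD}\, ds, \]
the integral converging in the strong operator topology on $\mathcal B(\vl^2(X, \mathcal S\otimes \mathcal V))$ since $s \mapsto e^{isD}$ is strongly continuous and uniformly bounded by $1$ and $\psi$ is compactly supported; in this case the integral is in fact a norm-convergent Bochner integral of $\vl^2(X,\mathcal S\otimes\mathcal V)$-valued functions after applying it to a fixed section.

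First I would reduce a general bounded Borel $\varphi$ with $\supp\hat\varphi$ compactly contained in $(-\beta c_D^{-1}, \beta c_D^{-1})$ to this regular case: pick $\chi \in C_c^\infty(-\beta c_D^{-1}, \beta c_D^{-1})$ equal to $1$ near $\supp\hat\varphi$, mollify to obtain $\varphi_n$ with $\widehat{\varphi_n} \in C_c^\infty(-\beta c_D^{-1},\beta c_D^{-1})$ and $\varphi_n \to \varphi$ boundedly and pointwise, and pass to the limit using dominated convergence in the Borel functional calculus. Since the support conclusion (being a closed condition tested against fixed compactly supported multipliers) is stable under strong limits, it suffices to prove the statement for $\varphi$ with $\hat\varphi \in C_c^\infty$. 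This two-step scheme, and the interchange arguments below, are exactly those of \cite[Section~10.3]{NH-JR00}, and nothing changes in the Hilbert module setting because the only inputs are strong continuity and uniform boundedness of $\{e^{isD}\}$ together with the locality supplied by Proposition \ref{prop:fps}.

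Next, with $\hat\varphi \in C_c^\infty(-\beta c_D^{-1}, \beta c_D^{-1})$ and $\sigma$ supported in $K$, I would test the support of $\varphi(D)\sigma$ against multiplication operators. Fix $f \in C_c(X)$ whose support is disjoint from $\overline{Z_\beta(K)}$. For every $s$ in $\supp\hat\varphi$ we have $|s|c_D < \beta$, so Proposition \ref{prop:fps} gives $\supp(e^{isD}\sigma) \subset Z_\beta(K)$, hence $f\, e^{isD}\sigma = 0$. Interchanging $f$ with the (norm-convergent) integral,
\[ f\,\varphi(D)\sigma = \frac{1}{2\pi}\int_{\mathbb R} \hat\varphi(s)\, f\, e^{isD}\sigma\, ds = 0. \]
Letting $f$ range over all such functions shows that $\varphi(D)\sigma$ vanishes off $\overline{Z_\beta(K)}$, i.e. $\supp(\varphi(D)\sigma) \subset \overline{Z_\beta(K)} \subset Z_{\beta'}(K)$ for any $\beta' > \beta$; using the sharp form $\supp(e^{isD}\sigma) \subset \{\,d(\cdot,K)\le |s|c_D\,\}$ one gets $\supp(\varphi(D)\sigma) \subset \{\,d(\cdot,K) \le \beta\,\} \subset Z_\beta(K)$ directly.

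The only genuine obstacle is the first step — making sense of $\varphi(D)$ for merely bounded Borel $\varphi$ and justifying both the Fourier representation and the interchange of the $ds$-integral with multiplication by $f$ (and, in the module setting, with the $\mathcal A$-valued inner product). This is precisely what the regularize–then–approximate argument handles, and it requires no new ideas beyond \cite[Section~10.3]{NH-JR00}; once one is in the case $\hat\varphi \in C_c^\infty$, every manipulation is an honest norm-convergent integral and the conclusion follows from Proposition \ref{prop:fps} termwise.
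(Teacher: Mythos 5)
Your argument is correct and is the same as the paper's: express $\varphi(D)$ via Fourier inversion as an integral of $e^{isD}$ against $\widehat\varphi$, then deduce the support statement termwise from Proposition \ref{prop:fps}. The paper phrases the locality test as a pairing $\langle \varphi(D)\sigma, \nu\rangle = \frac{1}{2\pi}\int \langle e^{isD}\sigma, \nu\rangle\,\widehat\varphi(s)\,ds$ against an arbitrary test section $\nu$ rather than against a compactly supported multiplier $f$, and it leaves the mollification of $\widehat\varphi$ to $C_c^\infty$ implicit, but these are cosmetic differences in an otherwise identical proof.
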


\begin{proof}
Since
\[ \langle \varphi(D)\sigma, 
\nu \rangle  = \frac{1}{2\pi} \int_{-\infty}^\infty \langle e^{i sD} \sigma, \nu\rangle \widehat \varphi(s) ds, \]
the corollary follows immediately from the proposition above.

\end{proof}
\begin{corollary}
Let $D_1$ and $D_2$ be essentially selfadjoint differential operators on $X$ which coincide on $Z_\beta(K)$. Suppose $\varphi$ is a bounded Borel function on $\mathbb R$ whose Fourier transform is supported in $(-\beta c_D^{-1}, \beta c_D^{-1})$. Then 
\[ \varphi(D_1) \sigma = \varphi(D_2)\sigma \]
for all $\sigma \in \Gamma^\infty (X, \mathcal S\otimes \mathcal V)$ supported in $K$.  
\end{corollary}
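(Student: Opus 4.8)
The plan is to reduce the assertion to a comparison of the wave operators $e^{isD_1}$ and $e^{isD_2}$, exactly as in Corollary \ref{cor:fp}, and then run a finite-propagation argument. Fix $\sigma\in\Gamma^\infty(X,\mathcal S\otimes\mathcal V)$ supported in $K$. Since $\widehat\varphi$ is supported in $(-\beta c_D^{-1},\beta c_D^{-1})$, for any $\nu$ we have
\[ \langle \varphi(D_j)\sigma,\nu\rangle = \frac{1}{2\pi}\int_{-\infty}^{\infty}\langle e^{isD_j}\sigma,\nu\rangle\,\widehat\varphi(s)\,ds ,\qquad j=1,2, \]
so it suffices to show $e^{isD_1}\sigma=e^{isD_2}\sigma$ for all $s$ with $|s|<\beta c_D^{-1}$, where $c_D=\max\{c_{D_1},c_{D_2}\}$ (note $D_1$ and $D_2$ have the same symbol on $Z_\beta(K)$, so a single constant governs the relevant propagation).

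To compare the two flows I would set $\sigma_j(s)=e^{isD_j}\sigma$, which is smooth in $(s,x)$ and solves $\partial_s\sigma_j=iD_j\sigma_j$ with $\sigma_j(0)=\sigma$, and then consider $g(s)=e^{-isD_1}\sigma_2(s)$. Differentiating and using the intertwining $D_1e^{-isD_1}=e^{-isD_1}D_1$ gives
\[ g'(s)=e^{-isD_1}\bigl(-iD_1\sigma_2(s)+iD_2\sigma_2(s)\bigr)=i\,e^{-isD_1}(D_2-D_1)\sigma_2(s). \]
By Proposition \ref{prop:fps}, for $|s|<\beta c_D^{-1}$ the section $\sigma_2(s)=e^{isD_2}\sigma$ is supported within $\{x:d(x,K)\le|s|c_{D_2}\}$, which is a compact subset of the open set $Z_\beta(K)$. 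Since $D_1$ and $D_2$ coincide on $Z_\beta(K)$, the differential operator $D_2-D_1$ annihilates any section supported there, so $(D_2-D_1)\sigma_2(s)=0$ and hence $g'(s)=0$ on $(-\beta c_D^{-1},\beta c_D^{-1})$. Therefore $g(s)=g(0)=\sigma$, i.e.\ $\sigma_2(s)=e^{isD_1}\sigma=\sigma_1(s)$ on this interval, which by the displayed integral formula yields $\varphi(D_1)\sigma=\varphi(D_2)\sigma$.

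The substantive content is entirely in Proposition \ref{prop:fps} (finite propagation speed), which is already available; the remaining points are routine analytic bookkeeping: that $s\mapsto e^{isD_j}\sigma$ is a smooth curve in every Sobolev space $\vh^m(X,\mathcal S\otimes\mathcal V)$ (so that the product rule and the intertwining relation above are legitimate, and the Fourier inversion formula converges in $\vl^2$), and that the constants match so that the support of $\sigma_2(s)$ stays strictly inside the region where $D_1=D_2$ for the whole time interval $|s|<\beta c_D^{-1}$. Both are standard and handled as in \cite[Section~10.3]{NH-JR00}, so I do not expect any genuine obstacle here; if anything, the only care needed is to keep track of the factor of $2$ in the definition $Z_\beta(K)=\{x:d(x,K)<2\beta\}$ when invoking the proposition.
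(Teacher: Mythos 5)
Your proof is correct and is the standard argument the paper implicitly relies on: the paper gives no proof for this corollary, deferring to \cite[Section~10.3]{NH-JR00}, and the argument there is precisely a reduction via Fourier inversion to a comparison of wave operators, settled by a derivative-of-the-propagator (equivalently, uniqueness for the generalized wave equation) computation combined with Proposition \ref{prop:fps}. Your bookkeeping with the factor of $2$ in $Z_\beta(K)$ and the choice $c_D=\max\{c_{D_1},c_{D_2}\}$ is the right way to make the paper's slightly loose statement precise, and the slack goes in the correct direction.
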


\subsection{Higher index classes}\label{sec:hic}
In this subsection, we construct the higher index class $\ind(D_{\mathcal V})$ (with finite propagation property) of the Dirac operator $D_{\mathcal V}$.

Let $D = D_{\mathcal V}$. A similar argument as that in Proposition $\ref{prop:fred}$ shows that  
\[ G = D(D^2 + \rho^2)^{-1/2}  \] 
is  a generalized Fredholm operator.  With respect to the $\mathbb Z_2$ grading, 
\[  G = \begin{pmatrix} 0 & U_G^\ast \\ U_G & 0 \end{pmatrix} = \begin{pmatrix} 0 & D^- (D^+D^- + \rho^2)^{-1/2} \\ D^+(D^-D^+ + \rho^2) ^{-1/2} & 0 \end{pmatrix}. \]
Then the index of $D$ is
\[ \ind(D) := \ind(U_G) \in  K_0(\mathcal K(\mathcal H_{C_r^\ast(\Gamma)}) ) \cong K_0(C^\ast_r(\Gamma)). \]

Such a representative of the index class does not have finite propagation property in general. In the following discussion, we shall carry out an explicit construction to remedy this.  

Before getting into the details, we would like to point out that one can also use the operator 
\[ F = D (D^2 + \rho^2 + [D, \rho]\varepsilon)^{-1/2}\]
to construct a representative of the index class $\ind(D)$. This requires some justification, since after all $F$ is not an odd operator with respect to the $\mathbb Z_2$-grading. In order to define an index class,  we need to take the odd part of $F$.  If we write 
\[ F  = \begin{pmatrix} A & U_F^\ast \\ U_F & C \end{pmatrix},  \]
then its odd part is 
\[ \begin{pmatrix} 0 & U_F^\ast \\ U_F & 0 \end{pmatrix}.  \]
One readily verifies that 
\[  G- F = \int_0^\infty D (D^2 + \rho^2 + [D, \rho]\varepsilon + \lambda^2)^{-1} ([D, \rho]\varepsilon) (D^2 + \rho^2 + \lambda^2)^{-1} d\lambda.\]
It follows that we can choose an appropriate $\rho$ so that $F$ is sufficiently close to $G$. In particular, $U_F$ is sufficiently  close to $U_G$. Since $U_G$ is generalized Fredholm, it follows that $U_F$ is also generalized Fredholm.  Moreover, 
\[  \ind(D) = \ind(U_G) = \ind(U_F). \]
In fact, the operator 
\[ F' := (D + \varepsilon\rho) (D^2 + \rho^2 + [D, \rho]\varepsilon)^{-1/2}\]
also produces the same index class for $D$, since $F'$ only differs from $F$ by a compact operator $\varepsilon\rho (D^2 + \rho^2 + [D, \rho]\varepsilon)^{-1/2}$. 

Notice that neither $F$ nor $F'$ produces an index class of finite propagation property yet. In the following, we shall  approximate $F'$ by an operator of finite propagation property and use the latter to construct a representative of the index class $\ind(D)$. 

\begin{definition}
A smooth function $\chi : \mathbb R \to [-1, 1]$ is a normalizing function if 
\begin{enumerate}[(1)]
\item $\chi (-\lambda) = -\chi(\lambda) $ for all $\lambda\in\mathbb R$,
\item $\chi(\lambda) > 0$ for $\lambda>0$,
\item $\chi(\lambda) \to \pm 1$ as $\lambda\to \pm \infty$. 
\end{enumerate}
\end{definition}


Since $D+ \varepsilon \rho$ is invertible, there exists  a normalizing function $\chi$ such that 
\[  \begin{cases}
 \chi(\lambda) = 1 & \textup{for $\lambda \geq a$} \\
\chi(\lambda) = -1 & \textup{for $\lambda \leq -a$ }
 \end{cases}\]
where  $a>0$ is a constant such that $\spec (D + \varepsilon \rho)  \cap (-a, a) = \emptyset$. In fact, 
\[  \chi(D+ \varepsilon\rho) = F' =  (D + \varepsilon\rho) (D^2 + \rho^2 + [D, \rho]\varepsilon)^{-1/2}.   \]

\begin{lemma}\label{lemma:normal}
For any $\delta >0$, there exists a normalizing function $\varphi$, for which its distributional Fourier transform $\widehat \varphi$ is compactly supported and for which $\xi\widehat \varphi(\xi)$ is a smooth function, such that
\[  \sup_{\lambda \in \mathbb R} |\varphi(\lambda) -\chi(\lambda)|< \delta.\]

\end{lemma}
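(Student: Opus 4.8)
The plan is to produce $\varphi$ by smoothing $\chi$ at the level of its (distributional) Fourier transform, while controlling the sup-norm error. First I would recall that since $\chi$ is a normalizing function with $\chi(\lambda) = \pm 1$ for $|\lambda| \geq a$, the function $\chi$ is bounded and $\chi' \in L^1(\mathbb R)$ (indeed $\chi'$ has compact support contained in $[-a,a]$ once we fix $\chi$ as above, or at worst is Schwartz-class if we only assume smoothness). Hence $\widehat{\chi}$ makes sense as a tempered distribution, and $\xi\widehat{\chi}(\xi)$ is, up to a constant, the Fourier transform of $\chi'$, hence a bounded continuous function. The strategy is then: fix a nonnegative even bump function $\psi \in C_c^\infty(\mathbb R)$ with $\int \psi = 1$, set $\psi_\epsilon(s) = \epsilon^{-1}\psi(s/\epsilon)$, and define $\varphi_\epsilon := \mathcal{F}^{-1}\big( \widehat{\chi} \cdot \eta_\epsilon\big)$ where $\eta_\epsilon(\xi) = \widehat{\psi}(\epsilon \xi)$ is a smooth cutoff equal to $1$ near $0$; equivalently $\varphi_\epsilon = \chi * \psi_\epsilon$ is the mollification of $\chi$. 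The key points to verify are (i) $\widehat{\varphi_\epsilon} = \widehat{\chi}\cdot\eta_\epsilon$ is compactly supported — this needs $\eta_\epsilon$ compactly supported, so I would instead take $\psi$ to be a Schwartz function whose Fourier transform $\widehat{\psi}$ is a compactly supported smooth bump equal to $1$ on a neighborhood of $0$ (such $\psi$ exists, e.g. $\psi = \mathcal{F}^{-1}$ of such a bump), and then $\widehat{\varphi_\epsilon}$ is supported in $\tfrac{1}{\epsilon}\,\mathrm{supp}\,\widehat{\psi}$, which is compact; (ii) $\xi\widehat{\varphi_\epsilon}(\xi) = \xi\widehat{\chi}(\xi)\eta_\epsilon(\xi)$ is smooth, which follows since $\xi\widehat{\chi}(\xi)$ is continuous (it is $\propto \widehat{\chi'}$) and multiplying by the smooth compactly supported $\eta_\epsilon$ — wait, a continuous times smooth need not be smooth, so here I would argue more carefully that $\widehat{\varphi_\epsilon} = \widehat{\chi'}/(i\xi)$ away from $0$ times $\eta_\epsilon$, and near $0$ use that $\eta_\epsilon \equiv 1$ there so $\xi\widehat{\varphi_\epsilon}(\xi) = \xi \widehat{\chi}(\xi)$; actually the cleanest route is to write $\varphi_\epsilon = \chi * \psi_\epsilon$ directly and compute $\widehat{\varphi_\epsilon} = \widehat{\chi}\,\widehat{\psi_\epsilon}$, then note $\xi\widehat{\varphi_\epsilon}(\xi)$ is the Fourier transform of $\varphi_\epsilon' = \chi' * \psi_\epsilon$, an honest $L^1$ (in fact Schwartz-decaying) function, hence $\xi\widehat{\varphi_\epsilon}(\xi)$ is smooth.

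Next I would check that $\varphi_\epsilon$ is a normalizing function for small $\epsilon$. Symmetry $\varphi_\epsilon(-\lambda) = -\varphi_\epsilon(\lambda)$ is immediate because $\chi$ is odd and $\psi_\epsilon$ is even. The limit $\varphi_\epsilon(\lambda) \to \pm 1$ as $\lambda \to \pm\infty$ follows from $\chi(\lambda)\to\pm1$ together with $\int \psi_\epsilon = 1$ and a standard dominated-convergence / uniform-continuity argument (write $\varphi_\epsilon(\lambda) - 1 = \int (\chi(\lambda - s) - 1)\psi_\epsilon(s)\,ds$ and split the integral). Positivity $\varphi_\epsilon(\lambda) > 0$ for $\lambda > 0$ is the one genuinely delicate condition: convolving an odd function with a bump can in principle create small negative values near the origin. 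To handle this I would use that $\chi'(0) > 0$ (we may choose $\chi$ with this property) so $\chi(\lambda) \geq c\lambda$ for $0 \le \lambda \le \delta_0$ and $\chi(\lambda) \geq c\delta_0 > 0$ for $\lambda \ge \delta_0$; a quantitative estimate then shows $\varphi_\epsilon(\lambda) > 0$ for all $\lambda>0$ once $\epsilon$ is small relative to $\delta_0$ and $c$. Alternatively, and more robustly, one can observe that this positivity is not actually needed for the application — but since the statement asks for it I would include the estimate.

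Finally, the sup-norm bound $\sup_\lambda |\varphi_\epsilon(\lambda) - \chi(\lambda)| < \delta$: write $\varphi_\epsilon(\lambda) - \chi(\lambda) = \int (\chi(\lambda - s) - \chi(\lambda))\psi_\epsilon(s)\,ds$ and bound using uniform continuity of $\chi$ (which holds: $\chi$ is smooth, constant outside $[-a,a]$, hence uniformly continuous on $\mathbb R$). Given $\delta$, pick $\eta > 0$ with $|\chi(u) - \chi(v)| < \delta$ whenever $|u-v| < \eta$; since $\psi_\epsilon$ is supported in $[-\epsilon M, \epsilon M]$ for $M = \sup |\mathrm{supp}\,\psi|$ (if $\psi$ had compact support) or concentrates near $0$ (Schwartz case, split off the tail using $\|\chi\|_\infty \le 1$), choosing $\epsilon$ small forces the integrand's support into $|s| < \eta$ up to negligible tail, giving the bound. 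Then set $\varphi = \varphi_\epsilon$ for this $\epsilon$. The main obstacle I anticipate is the bookkeeping reconciling the two competing demands — $\widehat\varphi$ compactly supported forces $\psi$ (the mollifier) to be a Schwartz function rather than compactly supported, which slightly complicates the tail estimates in the positivity and sup-norm arguments; but all of these are routine once one commits to the mollifier $\psi = \mathcal{F}^{-1}$ of a smooth compactly supported bump that is $\equiv 1$ near the origin.
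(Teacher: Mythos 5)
Your proposal is correct and follows essentially the same route as the paper's proof: mollify $\chi$ by convolving with a scaled even Schwartz kernel whose Fourier transform is a compactly supported smooth bump, then verify the stated properties from $\widehat{\chi * m_t} = \widehat\chi \cdot \widehat{m_t}$ together with uniform continuity of $\chi$. Two minor streamlinings are available: since the fixed $\chi$ has $\chi'$ smooth with compact support, $\xi\widehat\chi(\xi)=\widehat{\chi'}(\xi)$ is already smooth (indeed real analytic), so your detour through $\varphi_\epsilon'=\chi'*\psi_\epsilon$ is unnecessary; and the positivity $\varphi(\lambda)>0$ for $\lambda>0$ that you flag as delicate becomes immediate if one takes the mollifier nonnegative (a Fej\'er-type kernel, still with compactly supported Fourier transform) and $\chi$ nondecreasing, since then the even--odd splitting $\varphi(\lambda)=\int_0^\infty m_t(s)\bigl(\chi(\lambda+s)+\chi(\lambda-s)\bigr)\,ds$ has a manifestly nonnegative integrand that is strictly positive near $s=0$.
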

\begin{proof}
By our explicit choice of $\chi$, we see that $\chi'$  has compact support. Therefore  $ \xi \widehat \chi(\xi)  = \widehat{\chi'} (\xi)$ is a smooth function.

Let $m$ be a smooth even function on $\mathbb R$ whose Fourier transform is a compactly supported smooth function. Moreover, we assume that 
\[ \int_{\mathbb R} m(\lambda) \, d\lambda  = 1.\]
Define $m_t(\lambda)  = t^{-1} m(t^{-1} \lambda).$  It is easy to verify that  $(m_t \ast \chi)$ is a normalizing function and its distributional Fourier transform
\[ \widehat{m_t\ast \chi} = \widehat m_t \cdot \widehat \chi \]
is compactly supported. Moreover,  $  m_t \ast \chi \to \chi $ uniformly as $t \to 0 $, since $\chi$ is uniformly continuous on $\mathbb R$. Now let $\varphi = m_t\ast \chi$ for some sufficiently small $t>0$. Notice that $\xi\widehat \varphi(\xi)$ is a smooth function, since both $\widehat m_t$ and $\xi\widehat \chi(\xi)$ are smooth functions. This finishes the proof. 

\end{proof}

\begin{definition} Define $F_\varphi = \varphi(D+ \varepsilon \rho)$.

\end{definition}

Since the distributional Fourier transform $\widehat \varphi$ of $\varphi$ has compact support, it follows immediately from Corollary $\ref{cor:fp}$ that $ F_\varphi $ has finite propagation property. More precisely, we have the following lemma.

\begin{lemma} 
Suppose 
\[  \supp (\widehat \varphi ) \subset (-b, b). \]
Let $\beta =  c_D \cdot b $, where $c_D$ is the propagation speed  of $(D + \varepsilon \rho)$.
Let $K$ be a closed subset of $X$ and 
 \[   Z_\beta  = \{ x\in X\mid d(x, K) < 2\beta \} \]
 where $d(x, K)$ is the distance of $x$ from $K$. If $\sigma \in \Gamma^\infty (X, \mathcal S\otimes \mathcal V)$ is supported within $K$, then $\varphi(D+ \varepsilon \rho) \sigma$ is supported in $Z_\beta$. 
\end{lemma}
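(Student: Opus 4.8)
The plan is to deduce this lemma directly from the finite propagation results established earlier in the subsection, principally Corollary~\ref{cor:fp}, applied to the operator $D + \varepsilon\rho$ in place of the bare differential operator $D$. The first point to check is that $D + \varepsilon\rho$ is still a first order essentially self-adjoint differential operator to which Proposition~\ref{prop:fps} and Corollary~\ref{cor:fp} apply: indeed, $\varepsilon\rho$ is a bounded, compactly supported, fiberwise (zeroth order) self-adjoint bundle endomorphism, so $D + \varepsilon\rho$ is a first order differential operator, it is essentially self-adjoint (a bounded self-adjoint perturbation of an essentially self-adjoint operator), and it has the same symbol as $D$, hence the same finite propagation speed $c_D$. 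This is precisely the hypothesis needed for the corollary.

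Next I would invoke the definition $F_\varphi = \varphi(D + \varepsilon\rho)$ together with the hypothesis $\supp(\widehat\varphi) \subset (-b,b)$. With $\beta = c_D \cdot b$, the interval $(-b,b)$ is exactly $(-\beta c_D^{-1}, \beta c_D^{-1})$, so $\varphi$ is a bounded Borel function on $\mathbb R$ whose Fourier transform is supported in $(-\beta c_D^{-1}, \beta c_D^{-1})$ --- which is the hypothesis of Corollary~\ref{cor:fp} with $D$ replaced by $D + \varepsilon\rho$. Applying that corollary verbatim: for $\sigma \in \Gamma^\infty(X, \mathcal S\otimes\mathcal V)$ supported within $K$, the section $\varphi(D+\varepsilon\rho)\sigma$ is supported in $Z_\beta(K) = \{x \in X \mid d(x,K) < 2\beta\}$, which is the assertion. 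One should note that $\varphi$ is bounded (it is a normalizing function, valued in $[-1,1]$) so $\varphi(D+\varepsilon\rho)$ is a bona fide bounded operator and the functional calculus is unproblematic.

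There is essentially no obstacle here; the lemma is a bookkeeping consequence of the general finite propagation machinery, and the only thing requiring a word of care is confirming that the symbol (hence the propagation speed) of $D + \varepsilon\rho$ agrees with that of $D$, so that the constant $c_D$ appearing in the statement is legitimately the same as the one used in Corollary~\ref{cor:fp}. I would therefore write the proof in two or three sentences: first observe $D+\varepsilon\rho$ satisfies the hypotheses of Corollary~\ref{cor:fp} with the same propagation speed $c_D$, then note that $\widehat\varphi$ supported in $(-b,b)$ with $\beta = c_D b$ means $\widehat\varphi$ is supported in $(-\beta c_D^{-1}, \beta c_D^{-1})$, and conclude by a direct application of the corollary. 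If one wanted to be fully self-contained one could instead cite Proposition~\ref{prop:fps} and unwind the Fourier inversion formula $\langle \varphi(D+\varepsilon\rho)\sigma, \nu\rangle = \frac{1}{2\pi}\int \langle e^{is(D+\varepsilon\rho)}\sigma, \nu\rangle \widehat\varphi(s)\,ds$, but invoking the already-proved corollary is cleaner.
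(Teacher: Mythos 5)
Your proof is correct and follows exactly the route the paper takes: the paper states this lemma as an immediate consequence of Corollary~\ref{cor:fp} (noting $\supp(\widehat\varphi)$ is compact), and you supply the same application, with the additional worthwhile check that $D + \varepsilon\rho$ is still a first-order essentially self-adjoint operator with the same symbol and hence the same propagation speed $c_D$.
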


\noindent We denote the odd-grading part of $F_\varphi$ by  $\begin{pmatrix} 0 & U^\ast \\ U & 0 \end{pmatrix}$
and denote the odd-grading part of $F'$ by $ \begin{pmatrix} 0 & U_{F'}^\ast \\ U_{F'} & 0 \end{pmatrix},$
where $ F' = \chi(D+ \varepsilon\rho) =  (D + \varepsilon\rho) (D^2 + \rho^2 + [D, \rho]\varepsilon)^{-1/2}.$
By choosing $\varphi$ sufficiently close to $\chi$, we can make   $\| F_\varphi - F'\| $
as small as we want. 
In particular, we can choose $\varphi$ such that  $U$ is sufficiently close to $U_{F'}$.

To summarize, we have constructed an element $U$ such that
\begin{enumerate}[(a)]
\item $U$ has finite propagation property;
\item $U$ is a generalized Fredholm operator;
\item $\ind(D_{\mathcal V}) = \ind(U)$.
\end{enumerate}
Recall that, to construct the index class of the element $U\in \mathcal B(\mathcal H_{C_r^\ast(\Gamma)})$, we choose a element $V$ such that  $ UV- 1$ and $VU-1$ are in $\mathcal K(\mathcal H_{C_r^\ast(\Gamma)})$  . Then the idempotent
\[ W \begin{pmatrix} 1 & 0 \\ 0 & 0\end{pmatrix} W^{-1} = \begin{pmatrix} UV + UV(1-UV) & (2 + UV)(1-UV) U \\ V(1-UV) & (1-UV)^2\end{pmatrix},\]
is a representative of the index class, where
\[ W = \begin{pmatrix} 1 & U \\ 0 & 1\end{pmatrix} \begin{pmatrix} 1 & 0 \\ -V & 1\end{pmatrix} \begin{pmatrix} 1 & U  \\ 0 & 1 \end{pmatrix}\begin{pmatrix} 0 & -1\\ 1 & 0 \end{pmatrix}. \]
However, an arbitrary choice of  $V$ cannot guarantee that the resulting idempotent $p$ still has finite propagation property. So to remedy this, we choose 
\[  V = U^\ast. \]  
Then clearly 
\[ p =  \begin{pmatrix} UU^\ast + UU^\ast(1-UU^\ast) & (2 + UU^\ast)(1-UU^\ast) U \\ U^\ast(1-UU^\ast) & (1-UU^\ast)^2\end{pmatrix} \]
has finite propagation property. In general, $UU^\ast -1$  and $U^\ast U - 1$ are not in $\mathcal K(\mathcal H_{C_r^\ast(\Gamma)})$. As a result, 
\[ p- \begin{pmatrix} 1 & 0 \\0 & 0\end{pmatrix}  \notin \mathcal K(\mathcal H_{C_r^\ast(\Gamma)}).\]
This is taken care of by the discussion in Section $\ref{subsec:k}$, since $p$ is $\tau$-close to $\mathcal K(\mathcal H_{C_r^\ast(\Gamma)})$ (in the sense of Definition $\ref{def:close}$)  when $\varphi$ is sufficiently close to $\chi$. Therefore,  $p$ defines a $K$-theory class in $K_0( \mathcal K(\mathcal H_{C_r^\ast(\Gamma)}))$. This class coincides with the index class $\ind(D_{\mathcal V})$. 

\begin{definition}\label{def:idemfp}
We call the idempotent $p$ constructed above  an \textit{idempotent of finite propagation} of the Dirac operator $D_{\mathcal V}$.
\end{definition}

\begin{remark}\label{remark:dim}
To deal with manifolds of dimension $n \neq 0 \pmod 8$ in the real case, we work with $\cl_n$-linear Dirac operators, cf. \cite[Section II.7]{BLMM89}. Here $\cl_{n}$ is the standard real Clifford algebra on $\mathbb R^n$ with $e_ie_j + e_je_i =  - 2 \delta_{ij}$. We recall the definition of $\cl_n$-linear Dirac operators in the following. Consider the standard representation $\ell$ of $\textup{Spin}_n$ on $\cl_n$ given by left multiplication. Let $P_{\textup{spin}}(X)$ be the principal $\textup{Spin}_n$-bundle of a $n$-dimensional spin manifold $X$, then we define $\mathfrak S$ to be the vector bundle
\[ \mathfrak S = P_{\textup{spin}}(X)\times_{\ell} \cl_n. \]
Now let $\mathcal V$ be a $C_r^\ast(\Gamma; \mathbb R)$-bundle over $X$ as before. We denote the associated Dirac operator on $\mathfrak S\otimes \mathcal V$ by 
\[ \mathfrak D: \vl^2(X, \mathfrak S\otimes \mathcal V) \to \vl^2(X, \mathfrak S\otimes \mathcal V).\]
Notice that the right multiplication of $\cl_n$ on $\mathfrak S$ commutes with $\ell$. So we see that $\mathfrak D$ in fact defines a  higher index class 
\[\ind(\mathfrak D)\in \widehat K_0(C_r^\ast(\Gamma; \mathbb R)\widehat\otimes \cl_n) \cong \widehat K_{n}(C_r^\ast(\Gamma; \mathbb R)) \cong K_n(C_r^\ast(\Gamma; \mathbb R)).\] Here $\widehat K_\ast$ stands for $\mathbb Z_2$-graded $K$-theory, $\widehat\otimes$ stands for $\mathbb Z_2$-graded tensor product, cf. \cite[Chapter III]{MK78}. Notice that for a trivially graded $C^\ast$-algebra $\mathcal A$, we have $ \widehat K_{n}(\mathcal A) \cong K_n(\mathcal A)$.

This approach works equally well for the complex case, in which case $K$-theory takes periodicity $2$ instead of $8$. 
\end{remark}

\begin{remark}
In fact, there is a more geometric approach in the complex case. With the same notation as before, we assume $X$ is an odd dimensional spin manifold. Then $\mathbb R\times X_\Gamma$ is a $(\mathbb Z\times \Gamma)$-cover of $\mathbb S^1\times X$, where $\mathbb S^1$ is the unit circle. Now define the corresponding $C_r^\ast(\mathbb Z\times \Gamma)$-bundle over $\mathbb S^1\times \Gamma$ by
\[  \mathcal W = (\mathbb R\times X_\Gamma)\times_{\mathbb Z\times \Gamma}C_r^\ast(\mathbb Z\times \Gamma). \]
Then we have $\ind(D_{\mathcal W}) \in K_0(C_r^\ast(\mathbb Z\times \Gamma)) = K_0(C_r^\ast(\Gamma)) \oplus K_1(C_r^\ast(\Gamma)).  $  In fact, $ \ind(D_{\mathcal W})$ lies in the second summand, that is, $ \ind(D_{\mathcal W})  \in   K_1(C_r^\ast(\Gamma)).$
\end{remark}

%

\section{A relative higher index theorem} \label{sec:rel}

In this section, we prove one of the main results, a \textit{relative higher index theorem},  of the paper . 

Let $X_0$ and $X_1$ be two even dimensional\footnote{In the real case, we assume $\dim X_0 = \dim X_1 \equiv 0 \pmod 8$.} spin manifolds with complete Riemannian metrics of positive scalar curvature towards infinity. Assume that we have compact subspaces $K_i\subset X_i$ such that there is an (orientation preserving) isometry $\diffeo: \Omega_0 \to \Omega_1$, where  $\Omega_i \subset X_i - K_i$ is a union of (not necessarily all) connected components of $X_i - K_i$  (see Figure $\ref{fig:ends}$). Let $\mathcal S_i$ be the corresponding spinor bundle over $X_i$. We assume that  $\diffeo$ lifts to a bundle isometry $\widetilde \diffeo: \mathcal S_0|_{\Omega_0} \to \mathcal S_1|_{\Omega_1}$.  

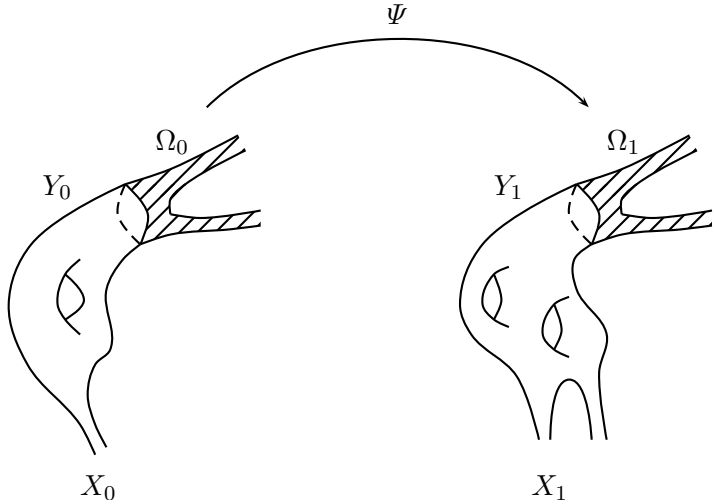
\begin{figure}[h]

\begin{pspicture*}(0, 0)(12, 7)
\rput[bl](3, 4.6){\rnode{A}{$\Omega_0$}}
\rput[bl](9, 4.6){\rnode{B}{$\Omega_1$}}
\psset{nodesep=.3, arcangle=45}
\ncarc{->}{A}{B}
\Aput{$\displaystyle \diffeo$}

\rput[bl](2, 0){$X_0$}
\rput[bl](8, 0){$X_1$}

\rput[bl](1.5, 4){$Y_0$}
\rput[bl](7.5, 4){$Y_1$}

\rput[bl](0, 0){ 

\pscurve(2.2,.6)(2, 1)(1.3, 1.8)(1.05, 2.6)(1.35, 3.4)(2.6, 4.2)
\pscurve(2.35, .7)(2.2, 1)(2.1, 1.4)(2.2, 1.8)(2.4, 2)(2.35, 2.8)(2.57, 3.2)(2.8, 3.4)


\pscurve(2, 2.2)(1.8, 2.4)(1.7, 2.6)(1.8, 3)(2,3.2)

\pscurve(1.8, 2.4) (2.05, 2.65)(1.8, 3)

\rput[bl](0,0){
\pscustom{  
\pscurve(2.8, 3.4)(3.2,3.53)(4, 3.6)(4.4, 3.65)
\gsave
\pscurve[liftpen=1](4.4, 3.85)(3.6, 3.75)(3.2, 3.8)
\fill[fillstyle=hlines,fillcolor=gray] \grestore}
\pscurve(4.4, 3.85)(3.6, 3.75)(3.2, 3.8)

\pscustom{  
\pscurve(2.8, 3.4)(2.9, 3.8)(2.6, 4.2) 
\gsave
\pscurve[liftpen=1](3.4, 4.2)(3.2, 4)(3.2, 3.8)
\fill[fillstyle=hlines,fillcolor=gray] \grestore}
\pscurve(3.4, 4.2)(3.2, 4)(3.2, 3.8)

\pscustom{  
\pscurve(2.6, 4.2)(3.2, 4.4)(4.1, 4.85)
\gsave
\pscurve[liftpen=1](4.2, 4.65)(3.4, 4.2)(3.2, 4)
\fill[fillstyle=hlines,fillcolor=gray] \grestore}

\pscurve(4.2, 4.65)(3.4, 4.2)(3.2, 4)
\pscurve[linestyle=dashed](2.8, 3.4)(2.5, 3.8)(2.6, 4.2)
}

}

\rput[bl](6,0){

\pscurve(2.1, .8)(1.8, 1.65)(1.2, 2.2)(1.05, 2.6)(1.35, 3.4)(2.6, 4.2)
\pscurve(3, .8)(2.9, 1.7)(3, 2.2)(2.6, 2.8)(2.57, 3.2)(2.8, 3.4)


\pscurve(2.25, .8)(2.5, 1.6)(2.8, .8) 
\pscurve(1.7, 2.3)(1.5, 2.4)(1.35, 2.6)(1.5, 3)(1.7, 3.1)
\pscurve(1.5, 2.4)(1.6, 2.65)(1.5, 3)

\pscurve(2.5, 1.9)(2.3, 2)(2.15, 2.2)(2.3, 2.6)(2.5, 2.7)
\pscurve(2.3, 2)(2.4, 2.25)(2.3, 2.6)
%

%
%
%
%

\rput[bl](0,0){
\pscustom{  
\pscurve(2.8, 3.4)(3.2,3.53)(4, 3.6)(4.4, 3.65)
\gsave
\pscurve[liftpen=1](4.4, 3.85)(3.6, 3.75)(3.2, 3.8)
\fill[fillstyle=hlines,fillcolor=gray] \grestore}
\pscurve(4.4, 3.85)(3.6, 3.75)(3.2, 3.8)

\pscustom{  
\pscurve(2.8, 3.4)(2.9, 3.8)(2.6, 4.2) 
\gsave
\pscurve[liftpen=1](3.4, 4.2)(3.2, 4)(3.2, 3.8)
\fill[fillstyle=hlines,fillcolor=gray] \grestore}
\pscurve(3.4, 4.2)(3.2, 4)(3.2, 3.8)

\pscustom{  
\pscurve(2.6, 4.2)(3.2, 4.4)(4.1, 4.85)
\gsave
\pscurve[liftpen=1](4.2, 4.65)(3.4, 4.2)(3.2, 4)
\fill[fillstyle=hlines,fillcolor=gray] \grestore}
\pscurve(4.2, 4.65)(3.4, 4.2)(3.2, 4)
\pscurve[linestyle=dashed](2.8, 3.4)(2.5, 3.8)(2.6, 4.2)
}

}

\end{pspicture*}

\caption{manifolds $X_0$ and  $X_1$ }\label{fig:ends}
\end{figure}


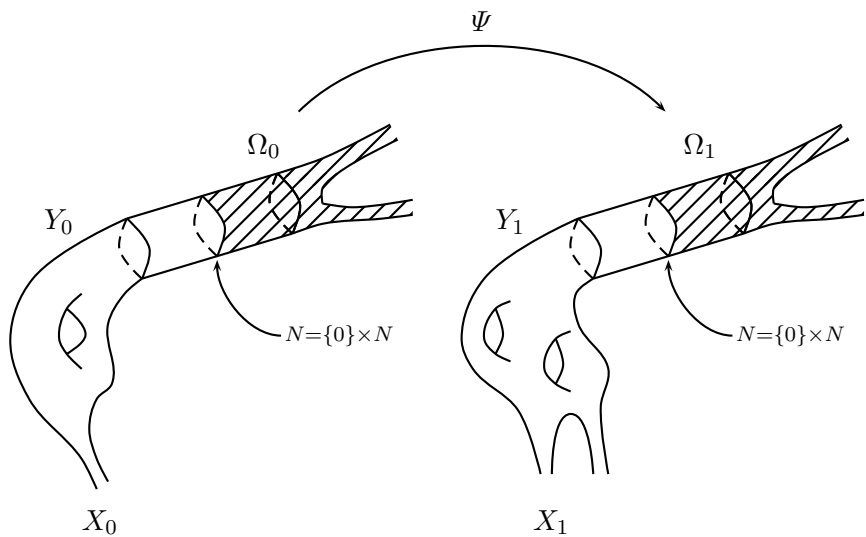
\begin{figure}[h]

\begin{pspicture*}(-1.5, 0)(11, 7)
\rput[bl](2.2, 5){\rnode{A}{$\Omega_0$}}
\rput[bl](8, 5){\rnode{B}{$\Omega_1$}}
\psset{nodesep=.3, arcangle=45}
\ncarc{->}{A}{B}
\Aput{$\displaystyle \diffeo$}


\rput[bl](-2, 0){

\pscurve(2.2,.6)(2, 1)(1.3, 1.8)(1.05, 2.6)(1.35, 3.4)(2.6, 4.2)
\pscurve(2.35, .7)(2.2, 1)(2.1, 1.4)(2.2, 1.8)(2.4, 2)(2.35, 2.8)(2.57, 3.2)(2.8, 3.4)


\pscurve(2, 2.2)(1.8, 2.4)(1.7, 2.6)(1.8, 3)(2,3.2)

\pscurve(1.8, 2.4) (2.05, 2.65)(1.8, 3)

\pscurve(2.8, 3.4)(2.9, 3.8)(2.6, 4.2) 
\pscurve[linestyle=dashed](2.8, 3.4)(2.5, 3.8)(2.6, 4.2)

\rput[bl](3.8, 3.7){\rnode{C}{}}
\rput[bl](4.7, 2.5){\rnode{D}{$\scriptstyle N = \{0\}\times N$}}

\psset{nodesep=.05}
\nccurve[angleA=-90, angleB=180]{<-}{C}{D}

\psline(2.6,4.2)(4.6,4.8)
\psline(2.8, 3.4)(4.8, 4)

\rput[bl](1, .3){

\pscurve[linestyle=dashed](2.8, 3.4)(2.5, 3.8)(2.6, 4.2)
\pscustom{
\pscurve(2.8, 3.4)(2.9, 3.8)(2.6, 4.2) 
\gsave
\pscurve[liftpen=1](3.6, 4.5) (3.9, 4.1)(3.8, 3.7)
\fill[fillstyle=hlines,fillcolor=gray] \grestore}
\pscurve(3.8, 3.7)(3.9, 4.1)(3.6, 4.5) 

\pscurve[linestyle=dashed](3.8, 3.7)(3.5, 4.1)(3.6, 4.5)
}

\rput[bl](2,.6){
\pscustom{  
\pscurve(2.8, 3.4)(3.2,3.53)(4, 3.6)(4.4, 3.65)
\gsave
\pscurve[liftpen=1](4.4, 3.85)(3.6, 3.75)(3.2, 3.8)
\fill[fillstyle=hlines,fillcolor=gray] \grestore}
\pscurve(4.4, 3.85)(3.6, 3.75)(3.2, 3.8)

\pscustom{  
\pscurve(2.8, 3.4)(2.9, 3.8)(2.6, 4.2) 
\gsave
\pscurve[liftpen=1](3.4, 4.2)(3.2, 4)(3.2, 3.8)
\fill[fillstyle=hlines,fillcolor=gray] \grestore}
\pscurve(3.4, 4.2)(3.2, 4)(3.2, 3.8)

\pscustom{  
\pscurve(2.6, 4.2)(3.2, 4.4)(4.1, 4.85)
\gsave
\pscurve[liftpen=1](4.2, 4.65)(3.4, 4.2)(3.2, 4)
\fill[fillstyle=hlines,fillcolor=gray] \grestore}
\pscurve(4.2, 4.65)(3.4, 4.2)(3.2, 4)
\pscurve[linestyle=dashed](2.8, 3.4)(2.5, 3.8)(2.6, 4.2)
}

 }


\rput[bl](4,0){

\pscurve(2.1, .8)(1.8, 1.65)(1.2, 2.2)(1.05, 2.6)(1.35, 3.4)(2.6, 4.2)
\pscurve(3, .8)(2.9, 1.7)(3, 2.2)(2.6, 2.8)(2.57, 3.2)(2.8, 3.4)


\pscurve(2.25, .8)(2.5, 1.6)(2.8, .8) 
\pscurve(1.7, 2.3)(1.5, 2.4)(1.35, 2.6)(1.5, 3)(1.7, 3.1)
\pscurve(1.5, 2.4)(1.6, 2.65)(1.5, 3)

\pscurve(2.5, 1.9)(2.3, 2)(2.15, 2.2)(2.3, 2.6)(2.5, 2.7)
\pscurve(2.3, 2)(2.4, 2.25)(2.3, 2.6)

\pscurve(2.8, 3.4)(2.9, 3.8)(2.6, 4.2) 
\pscurve[linestyle=dashed](2.8, 3.4)(2.5, 3.8)(2.6, 4.2)

\rput[bl](3.8, 3.7){\rnode{E}{}}
\rput[bl](4.7, 2.5){\rnode{F}{$\scriptstyle N = \{0\}\times N$}}

\psset{nodesep=.05}
\nccurve[angleA=-90, angleB=180]{<-}{E}{F}

\psline(2.6,4.2)(4.6,4.8)
\psline(2.8, 3.4)(4.8, 4)

\rput[bl](1, .3){

\pscurve[linestyle=dashed](2.8, 3.4)(2.5, 3.8)(2.6, 4.2)
\pscustom{
\pscurve(2.8, 3.4)(2.9, 3.8)(2.6, 4.2) 
\gsave
\pscurve[liftpen=1](3.6, 4.5) (3.9, 4.1)(3.8, 3.7)
\fill[fillstyle=hlines,fillcolor=gray] \grestore}
\pscurve(3.8, 3.7)(3.9, 4.1)(3.6, 4.5) 

\pscurve[linestyle=dashed](3.8, 3.7)(3.5, 4.1)(3.6, 4.5)
}

\rput[bl](2,.6){
\pscustom{  
\pscurve(2.8, 3.4)(3.2,3.53)(4, 3.6)(4.4, 3.65)
\gsave
\pscurve[liftpen=1](4.4, 3.85)(3.6, 3.75)(3.2, 3.8)
\fill[fillstyle=hlines,fillcolor=gray] \grestore}
\pscurve(4.4, 3.85)(3.6, 3.75)(3.2, 3.8)

\pscustom{  
\pscurve(2.8, 3.4)(2.9, 3.8)(2.6, 4.2) 
\gsave
\pscurve[liftpen=1](3.4, 4.2)(3.2, 4)(3.2, 3.8)
\fill[fillstyle=hlines,fillcolor=gray] \grestore}
\pscurve(3.4, 4.2)(3.2, 4)(3.2, 3.8)

\pscustom{  
\pscurve(2.6, 4.2)(3.2, 4.4)(4.1, 4.85)
\gsave
\pscurve[liftpen=1](4.2, 4.65)(3.4, 4.2)(3.2, 4)
\fill[fillstyle=hlines,fillcolor=gray] \grestore}
\pscurve(4.2, 4.65)(3.4, 4.2)(3.2, 4)
\pscurve[linestyle=dashed](2.8, 3.4)(2.5, 3.8)(2.6, 4.2)
}

}
\rput[bl](0, 0){$X_0$}
\rput[bl](6, 0){$X_1$}

\rput[bl](-.5, 4){$Y_0$}
\rput[bl](5.5, 4){$Y_1$}

\end{pspicture*}

\caption{stretched manifolds $X_0$ and $X_1$}\label{fig:surfaces}

\end{figure}

Let $(X_i)_\Gamma$ be a $\Gamma$-cover of $X_i$, where $\Gamma$ is a discrete group. Denote by 
\[  \pi_i : (X_i)_\Gamma \to X_i  \] 
the corresponding covering map. We assume that $\diffeo$ lifts to an isometry of the covers, also denoted by  $\widetilde \diffeo$, i.e. the following diagram commutes.
\[  \xymatrix{ \pi_0^{-1}(\Omega_0)  \ar[r]^{\widetilde \diffeo} \ar[d]_{\pi_0}  & \pi_1^{-1}(\Omega_1) \ar[d]^{\pi_1} \\
   \Omega_0 \ar[r]^{\diffeo}  & \Omega_1
}
\]
Define a flat $C_r^\ast(\Gamma)$-bundle $\mathcal V_i$ on $X_i$ by 
\[  \mathcal V_i = (X_i)_\Gamma\times_\Gamma C_r^\ast(\Gamma). \]
Then $\widetilde \diffeo$ induces a bundle isometry from $\mathcal V_0 |_{\Omega_0} $ to $\mathcal V_1|_{\Omega_1} $, which we still denote by $\widetilde\diffeo$. Notice that $\widetilde \diffeo:  \mathcal V_0 |_{\Omega_0}\to \mathcal V_0 |_{\Omega_1}$ covers the isometry $\diffeo: \Omega_0 \to \Omega_1$, that is, the following diagram commutes.
\[  \xymatrix{ \mathcal V_0|_{\Omega_0}  \ar[r]^{\widetilde \diffeo} \ar[d]  & \mathcal V_1|_{\Omega_1} \ar[d] \\
   \Omega_0 \ar[r]^{\diffeo}  & \Omega_1
}
\]
Let $D_i = D_{\mathcal V_i}$ be the associated Dirac operator on $X_i$, $ i =0, 1$. Then we have 
\[  D_1 = \widetilde \diffeo \circ D_0 \circ \widetilde \diffeo^{-1}  \]
on $\Omega_1$. We say $D_0$ and $D_1$ agree on $\Omega = \Omega_0 \cong \Omega_1.$

Let $N$ be a compact hypersurface in $\Omega\cong \Omega_i$ such that $N$ cuts $X_i$ into two components. We separate off the component that is inside $\Omega_i$ and denote the remaining part of $X_i$ by $Y_i$ (see Figure $\ref{fig:ends}$). Note that a deformation of the metric in a compact subset of a manifold does not affect the scalar curvature towards infinity, neither does it change the $K$-theory class of the higher index of the associated Dirac operator. So without loss of generality,  we can assume $Y_i$ has product metric in a small neighborhood of $N$. 

In fact, in order to make use of the finite propagation property of our higher index classes, we further deform the metric near a collar neighborhood $(-\delta, \delta)\times N$ of $N$ so that $ (-\delta, \delta)\times N$ becomes $(-\ell, \ell)\times N$ for $\ell$ sufficiently large. Here we assume the standard Euclidean metric along the interval $(-\ell, \ell)$. Now $ N = \{0\}\times N$ cuts $X_i$ into two components. We separate off the component that is inside $\Omega_i$ and denote the remaining part of $X_i$ by $Y_i\cup ( (-\ell, 0]\times N) $ (see Figure $\ref{fig:surfaces}$).

\begin{remark}
 Recall that our choice of the normalizing function $\chi$ (Section $\ref{sec:hic}$)  depends on the lower bound of $D_{\mathcal V} + \varepsilon \rho$. We claim that the stretching performed above  does not affect the choice of $\chi$. Indeed, since the stretching does not change the scalar curvature on the cylindrical neighborhood of $N$, it follows from the proof of Lemma $\ref{lemma:lb}$ that the same lower bound works for the operator $D_{\mathcal V} + \varepsilon \rho$ before and after the stretching.  
\end{remark}

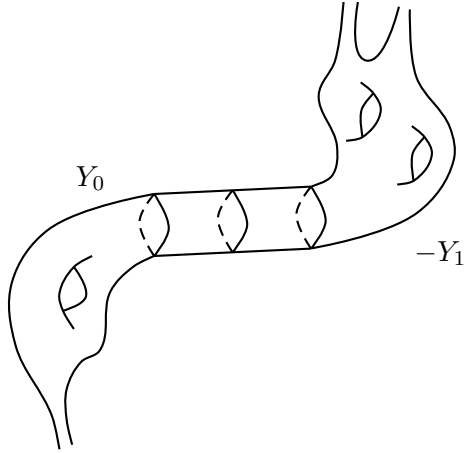
\begin{figure}[h]

\begin{pspicture*}(0,4)(10, 10)

\rput[bl]{-14}(1,4){

\pscurve(2.2,.6)(2, 1)(1.3, 1.8)(1.05, 2.6)(1.35, 3.4)(2.6, 4.2)
\pscurve(2.35, .7)(2.2, 1)(2.1, 1.4)(2.2, 1.8)(2.4, 2)(2.35, 2.8)(2.57, 3.2)(2.8, 3.4)


\pscurve(2, 2.2)(1.8, 2.4)(1.7, 2.6)(1.8, 3)(2,3.2)

\pscurve(1.8, 2.4) (2.05, 2.65)(1.8, 3)

\pscurve(2.8, 3.4)(2.9, 3.8)(2.6, 4.2) 
\pscurve[linestyle=dashed](2.8, 3.4)(2.5, 3.8)(2.6, 4.2)

\psline(2.6,4.2)(4.6,4.8)
\psline(2.8, 3.4)(4.8, 4)

\rput[bl](1, .3){

\pscurve[linestyle=dashed](2.8, 3.4)(2.5, 3.8)(2.6, 4.2)
\pscurve(2.8, 3.4)(2.9, 3.8)(2.6, 4.2) 
\pscurve(3.8, 3.7)(3.9, 4.1)(3.6, 4.5) 

\pscurve[linestyle=dashed](3.8, 3.7)(3.5, 4.1)(3.6, 4.5)
}

 }

\rput[bl](3.5, 7.5){$Y_0$}

\rput[bl](8, 6.5){$-Y_1$}

\rput[bl]{166}(10.17, 10.17){

\pscurve(2.1, .8)(1.8, 1.65)(1.2, 2.2)(1.05, 2.6)(1.35, 3.4)(2.6, 4.2)
\pscurve(3, .8)(2.9, 1.7)(3, 2.2)(2.6, 2.8)(2.57, 3.2)(2.8, 3.4)


\pscurve(2.25, .8)(2.5, 1.6)(2.8, .8) 
\pscurve(1.7, 2.3)(1.5, 2.4)(1.35, 2.6)(1.5, 3)(1.7, 3.1)
\pscurve(1.5, 2.4)(1.6, 2.65)(1.5, 3)

\pscurve(2.5, 1.9)(2.3, 2)(2.15, 2.2)(2.3, 2.6)(2.5, 2.7)
\pscurve(2.3, 2)(2.4, 2.25)(2.3, 2.6)

}

\end{pspicture*}
\caption{manifold $X_2$}\label{fig:glue}
\end{figure}

Now we can glue $Y_0\cup((-\ell, 0]\times N)$ and $Y_1\cup( (-\ell, 0]\times N)$ along $N = \{0\}\times N$. To do this, we need to reverse the orientation of one of the manifolds, say, $Y_1\cup((-\ell, 0]\times N)$.  We denote the resulting manifold from this gluing by  $X_2$ (see Figure $\ref{fig:glue}$). The spinor bundles $\mathcal S_0$ over $Y_0\cup((-\ell, 0]\times N)$ and $\mathcal S_1$ over $Y_1\cup((-\ell, 0]\times N)$ are glued together by the Clifford multiplication $c(v)$ to give a spinor bundle over $X_2$, where $v =\frac{d}{du}$ is the inward unit normal vector near the boundary of $Y_0\cup((-\ell, 0]\times N)$. Moreover, the bundles $\mathcal V_0|_{Y_0}$ and $\mathcal V_1|_{Y_1}$ are glued together by $\widetilde\diffeo$ (near the boundary) to define a flat bundle $\mathcal V_2$ on $X_2$. Let $D_2 = D_{\mathcal V_2}$ be the associated Dirac operator on $X_2$.  

Similarly,  we can use two copies of $Y_1$ to construct a double of $Y_1$. We define the manifold (see Figure $\ref{fig:double}$)
\[ X_3 = (Y_1\cup((-\ell, 0]\times N) ) \bigcup_{\{0\}\times N} -(Y_1\cup( (-\ell, 0]\times N ) )\]  and denote its associated Dirac operator by $D_3 = D_{\mathcal V_3}$.

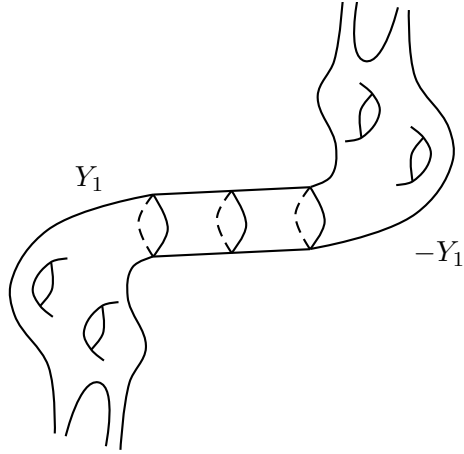
\begin{figure}[h]

\begin{pspicture*}(0,4)(10, 10)

\rput[bl]{-14}(1,4){

\pscurve(2.1, .8)(1.8, 1.65)(1.2, 2.2)(1.05, 2.6)(1.35, 3.4)(2.6, 4.2)
\pscurve(3, .8)(2.9, 1.7)(3, 2.2)(2.6, 2.8)(2.57, 3.2)(2.8, 3.4)


\pscurve(2.25, .8)(2.5, 1.6)(2.8, .8) 
\pscurve(1.7, 2.3)(1.5, 2.4)(1.35, 2.6)(1.5, 3)(1.7, 3.1)
\pscurve(1.5, 2.4)(1.6, 2.65)(1.5, 3)

\pscurve(2.5, 1.9)(2.3, 2)(2.15, 2.2)(2.3, 2.6)(2.5, 2.7)
\pscurve(2.3, 2)(2.4, 2.25)(2.3, 2.6)

\pscurve(2.8, 3.4)(2.9, 3.8)(2.6, 4.2) 
\pscurve[linestyle=dashed](2.8, 3.4)(2.5, 3.8)(2.6, 4.2)

\psline(2.6,4.2)(4.6,4.8)
\psline(2.8, 3.4)(4.8, 4)

\rput[bl](1, .3){

\pscurve[linestyle=dashed](2.8, 3.4)(2.5, 3.8)(2.6, 4.2)
\pscurve(2.8, 3.4)(2.9, 3.8)(2.6, 4.2) 
\pscurve(3.8, 3.7)(3.9, 4.1)(3.6, 4.5) 

\pscurve[linestyle=dashed](3.8, 3.7)(3.5, 4.1)(3.6, 4.5)
}

 }

\rput[bl](3.5, 7.5){$Y_1$}

\rput[bl](8, 6.5){$-Y_1$}

\rput[bl]{166}(10.17, 10.17){

\pscurve(2.1, .8)(1.8, 1.65)(1.2, 2.2)(1.05, 2.6)(1.35, 3.4)(2.6, 4.2)
\pscurve(3, .8)(2.9, 1.7)(3, 2.2)(2.6, 2.8)(2.57, 3.2)(2.8, 3.4)


\pscurve(2.25, .8)(2.5, 1.6)(2.8, .8) 
\pscurve(1.7, 2.3)(1.5, 2.4)(1.35, 2.6)(1.5, 3)(1.7, 3.1)
\pscurve(1.5, 2.4)(1.6, 2.65)(1.5, 3)

\pscurve(2.5, 1.9)(2.3, 2)(2.15, 2.2)(2.3, 2.6)(2.5, 2.7)
\pscurve(2.3, 2)(2.4, 2.25)(2.3, 2.6)

}

\end{pspicture*}
\caption{manifold $X_3$}\label{fig:double}
\end{figure}


We have the following relative higher index theorem. 

\begin{theorem}\label{thm:rit}
\[ \ind(D_2)  = \ind(D_0) - \ind(D_1) \]
in $K_0(C_r^\ast(\Gamma))$.
\end{theorem}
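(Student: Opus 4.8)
The plan is to deduce Theorem~\ref{thm:rit} from two facts: a pasting (additivity) principle for the higher index classes of Dirac operators that are glued along $N$ from common pieces, and the vanishing $\ind(D_3)=0$. The latter is exactly what the invertible double construction (Theorem~\ref{thm:double}) supplies, since $X_3$ is by construction the double of $Y_1\cup((-\ell,0]\times N)$ across $\{0\}\times N$. Once the pasting principle is available in the form
\[ \ind(D_2)+\ind(D_1)=\ind(D_0)+\ind(D_3), \]
the theorem follows immediately.

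First I would recognize that all four manifolds are built from the same blocks. Let $C$ be the component of $X_i\setminus N$ lying inside $\Omega_i$; because $\diffeo$ is an isometry identifying $\Omega_0$ with $\Omega_1$ and carrying $N$ to $N$, this block --- together with its spinor bundle and its flat $C_r^\ast(\Gamma)$-bundle, via $\widetilde\diffeo$ --- is literally the same for $i=0,1$. Thus $X_0=Y_0\cup_N C$, $X_1=Y_1\cup_N C$, $X_2=Y_0\cup_N(-Y_1)$, $X_3=Y_1\cup_N(-Y_1)$ are glued, along the common long product collar $(-\ell,\ell)\times N$, out of the four blocks $Y_0$, $Y_1$, $-Y_1$, $C$, so that $X_2\sqcup X_1$ and $X_0\sqcup X_3$ are assembled from the same four blocks with the same collar gluing data along $N$, merely regrouped; the displayed identity is the corresponding cut-and-paste additivity of higher indices. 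Since a compactly supported deformation of the metric changes neither the scalar curvature towards infinity nor any higher index class, and using the stretching already set up (which keeps the scalar curvature positive along $(-\ell,\ell)\times N$), we may take the cutoff functions $\rho_i$ of Section~\ref{sec:dirac} to be supported in the interiors of the $Y$-blocks, far from $N$ compared with the propagation parameters below; on $X_2$ we set $\rho_2$ equal to $\rho_0$ on the $Y_0$-side and to $\rho_1$ on the $-Y_1$-side, and on $X_3$ we use two copies of $\rho_1$.

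The pasting principle itself I would prove with idempotents of finite propagation. Represent $\ind(D_i)$ by such an idempotent $p_i$ (Definition~\ref{def:idemfp}), built from $\varphi(D_i+\varepsilon\rho_i)$ for a normalizing function $\varphi$ close to $\chi$ whose distributional Fourier transform has support so small --- and with $\ell$ so large --- that the propagation of $p_i$ is a small fraction of $\ell$. The key is a localization statement: because $\rho_i$ is arranged so that $(D_i+\varepsilon\rho_i)^2$ is uniformly positive off $\supp\rho_i$ (in particular near $N$), the element $p_i-\left(\begin{smallmatrix}1&0\\0&0\end{smallmatrix}\right)$ is, modulo an operator of norm as small as we please, supported in an arbitrarily small neighborhood of $\supp\rho_i$; moreover, whenever two of the operators $D_i+\varepsilon\rho_i$ coincide on a large enough region, the corresponding $p_i$ coincide near the relevant part of $\supp\rho$, by finite propagation speed (Corollary~\ref{cor:fp} and the corollary in Section~\ref{subsec:fps} on operators agreeing on $Z_\beta$). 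Plugging this into the orthogonal decomposition of the $C_r^\ast(\Gamma)$-Hilbert module of $X_2$ into its $Y_0$-part and its $-Y_1$-part --- under which the compact operators of the module split as a direct sum --- the class of $p_2-\left(\begin{smallmatrix}1&0\\0&0\end{smallmatrix}\right)$ becomes the sum of a $Y_0$-contribution, which equals $\ind(D_0)$ because $D_2+\varepsilon\rho_2=D_0+\varepsilon\rho_0$ on a large neighborhood of $\supp\rho_0$ and $X_0=Y_0\cup_N C$, and a $-Y_1$-contribution, which equals the $-Y_1$-contribution of $\ind(D_3)$ for the same reason via $X_3$; carrying out the analogous bookkeeping for $X_1$ relative to $X_0$ and $X_3$ and summing yields the displayed identity. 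All identifications of $K$-theory classes here are legitimate thanks to the calculus of $\tau$-close idempotents of Section~\ref{subsec:k} and the fact that the inclusion of the compact operators of a direct-summand Hilbert submodule into those of the ambient module induces an isomorphism on $K_0$. Combining with $\ind(D_3)=0$ completes the proof.

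The step I expect to be the main obstacle is this localization claim and the bookkeeping around it: showing that $p_i$, up to controllably small errors, depends only on a neighborhood of $\supp\rho_i$ and hence can be transplanted between $X_0$, $X_2$ and $X_3$ without changing its $K$-theory class. The difficulty is that a function whose Fourier transform is compactly supported is never locally constant, so $1-\varphi(D_i+\varepsilon\rho_i)^2$ is merely small, not zero, away from $\supp\rho_i$; one must therefore abandon exact support conditions in favor of $\tau$-closeness throughout, and must check that a single choice of $\varphi$ and of $\ell$ simultaneously validates all of the finite-propagation comparisons for $X_0,X_1,X_2,X_3$. One also has to verify that the collar can be stretched while keeping the scalar curvature positive near $N$, so that the $\rho_i$ can indeed be pushed away from the gluing region.
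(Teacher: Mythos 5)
Your high-level strategy coincides with the paper's: both reduce to the rearrangement identity $\ind(D_2)-\ind(D_3)=\ind(D_0)-\ind(D_1)$ and then invoke the invertible-double vanishing $\ind(D_3)=0$. However, your route to the rearrangement identity differs, and it has a genuine gap.

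The paper does \emph{not} try to decompose each class $\ind(D_i)=[p_i]-[e]$ into pieces indexed by connected components of $\supp\rho_i$. Instead it forms the difference element $E(\tilde p,\tilde q)$ of Section~\ref{sec:diff} directly from the finite-propagation idempotents for $D_0$ and $D_1$ on the common module $\widetilde{\mathcal H}=\mathcal H_1\oplus\mathcal H_2\oplus\mathcal H_3\oplus\mathcal H_4$, and observes that because $\tilde p$ and $\tilde q$ \emph{agree} (via $\widetilde\diffeo$) on the shared region, every nonzero block of the explicit formula $\eqref{eq:diff}$ contains a factor $\tilde p-\tilde q$ and the $\mathcal H_3$ summand drops out. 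The only requirement on $\rho$ is that $\rho_0$ and $\rho_1$ coincide on $\Omega$; there is no need for $\rho_i$ to vanish near $N$ or on the cylinder.

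The gap in your argument is precisely the step you flag at the end but do not resolve: you need $\supp\rho_i$ to lie entirely in the interiors of the $Y$-blocks, ``far from $N$.'' This requires the scalar curvature to be bounded below by a positive constant on the cylinder $(-\ell,\ell)\times N$ and on $C$ (so that $\kappa/4\geq c_0$ there and $\rho_i$ can be taken to vanish). Because the collar metric is a product, its scalar curvature equals that of $(N,g|_N)$, and stretching does not change it. The theorem's hypotheses explicitly allow $\Omega$ (and hence $N$) to lie in a region of \emph{nonpositive} scalar curvature --- indeed this is the generalization over Bunke's relative index theorem that the introduction emphasizes, and it is essential for the application where $X_0$ and $X_1$ are compact with arbitrary metrics. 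In that case no choice of $c_0>0$ lets you push $\rho_i$ off the cylinder, $\supp\rho_i$ is forced to straddle $N$, and the decomposition of $[p_i]-[e]$ into a $Y_0$-piece, a cylinder-piece, and a $C$-piece that you rely on (with the cylinder-piece absent) does not hold. Your localization observation --- that for the idealized $\chi$, $p-e$ is built entirely from $1-UU^\ast=C^2$ where $C$ is the even part of $\chi(D+\varepsilon\rho)$, and that $C$ kills sections supported away from $\supp\rho$ by finite propagation and the oddness of $\varphi$ --- is correct and interesting, but it only yields the stated bookkeeping when $\supp\rho_i$ splits cleanly into far-apart pieces. To cover the paper's full hypotheses you would need to replace the per-manifold decomposition of $\ind(D_i)$ by a pairwise comparison of $p$ and $q$ on the shared region, at which point you essentially recover the paper's difference-construction argument.
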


\begin{remark}
If $\dim X = n$, then we have 
\[ \ind(D_2)  = \ind(D_0) - \ind(D_1) \]
in $K_n(C_r^\ast(\Gamma))$ (resp. $K_n(C_r^\ast(\Gamma; \mathbb R))$ in the real case), cf. Remark $\ref{remark:dim}$.

\end{remark}

Before we prove the theorem, let us fix some notation. Let $p$ be an idempotent of finite propagation (in the sense of Definition $\ref{def:idemfp}$) for $D_0$ and  $q$ an idempotent of finite propagation for $D_1$. Since $p$ and $q$ have finite propagation property and the cylinder $(-\ell , \ell)\times N$ is sufficiently long (that is, $\ell$ is sufficiently large), we have  
\[ p(\sigma)  = u^{\ast} q u (\sigma)\]
 for all $\sigma\in \vl^2(X_0\backslash (Y_0\cup ((-\ell, 0]\times N) ), \mathcal S_0\otimes \mathcal V_0)$, where 
\[u: \vl^2(X_0 \backslash (Y_0\cup ( (-\ell, 0] \times N) ), \mathcal S_0\otimes \mathcal V_0) \to \vl^2(X_1 \backslash (Y_1\cup ( (-\ell, 0]\times N ) ), \mathcal S_1\otimes \mathcal V_1)\] is the unitary operator induced by the isometry 
$\widetilde \diffeo: \mathcal S_0\otimes \mathcal V_0|_{\Omega_0} \to \mathcal S_0\otimes \mathcal V_1|_{\Omega_1}.$

\begin{definition}
Define the following Hilbert modules over $C_r^\ast(\Gamma)$:
\[ \mathcal H_1 = \vl^2(Y_0\cup ((-\ell, 0]\times N), \mathcal S_0\otimes \mathcal V_0), \]
\[\mathcal H_2 = \vl^2( [0, \ell]\times N, \mathcal S_0\otimes \mathcal V_0), \]
\[ \mathcal H_3 =  \vl^2(X_0 \backslash (Y_0\cup ((-\ell, \ell]\times N ) ), \mathcal S_0\otimes \mathcal V_0), \]
\[ \mathcal H_4 =  \vl^2(Y_1\cup( (-\ell, 0]\times N), \mathcal S_1\otimes \mathcal V_1). \]
\end{definition}
Notice that 
\[ \vl^2(X_0, \mathcal S_0\times \mathcal V_0) = \mathcal H_1 \oplus \mathcal H_2 \oplus \mathcal H_3 \]
and 
\[ \vl^2(X_1, \mathcal S_1\times \mathcal V_1) = u(\mathcal H_2) \oplus u(\mathcal H_3)\oplus \mathcal H_4. \]
Let us denote 
\[\widetilde {\mathcal H}_{C_r^\ast(\Gamma)} = \mathcal H_1\oplus \mathcal H_2 \oplus \mathcal H_3 \oplus \mathcal H_4.\]
By finite propagation property of $p$ and $q$, we have 
\[ p = \begin{pmatrix} p_{11} & p_{12} & 0  & 0 \\  p_{21} & p_{22} & p_{23}& 0 \\ 0 & P_{32} & p_{33}  & 0  \\ 0 & 0 & 0 & 0 \end{pmatrix}  \quad \textup{and} \quad  q=  \begin{pmatrix} 0  & 0 & 0 & 0\\ 0 & q_{22} & q_{23} & q_{24} \\ 0 & q_{32} & q_{33}  & 0 \\ 0 & q_{42} & 0  & q_{44}  \end{pmatrix}\]
in $  \mathcal B(\widetilde {\mathcal H}_{C_r^\ast(\Gamma)} )$.  Here $  \mathcal B(\widetilde {\mathcal H}_{C_r^\ast(\Gamma)} )$ is the space of all adjointable operators in the Hilbert module $\widetilde {\mathcal H}_{C_r^\ast(\Gamma)}$.

\begin{proof}[\textbf{Proof of Theorem }$\ref{thm:rit}$]
Denote 
\[ e_1 = \begin{pmatrix} 1 & 0 \\ 0 & 0 \end{pmatrix} \in \mathcal B(\vh_1) \quad \textup{and} \quad  e_4 = \begin{pmatrix} 1 & 0 \\ 0 & 0 \end{pmatrix} \in \mathcal B(\vh_4), \]
where the matrix form is used to denote the $\mathbb Z_2$-grading of $\vh_2$ and $\vh_3$. 
Define 
\[ \tilde p = \begin{pmatrix} p_{11} & p_{12} & 0  & 0 \\  p_{21} & p_{22} & p_{23}& 0 \\ 0 & P_{32} & p_{33}  & 0  \\ 0 & 0 & 0 & e_4 \end{pmatrix}  \quad \textup{and} \quad  \tilde q=  \begin{pmatrix} e_1  & 0 & 0 & 0\\ 0 & q_{22} & q_{23} & q_{24} \\ 0 & q_{32} & q_{33}  & 0 \\ 0 & q_{42} & 0  & q_{44}  \end{pmatrix}\]
Notice that $\tilde p - \tilde q $ is $\tau$-close  to $ \mathcal K(\widetilde {\mathcal H}_{C_r^\ast(\Gamma)})$ (in the sense of Definition $\ref{def:close}$). By applying the difference construction (cf. Section $\ref{sec:diff}$) to $(\tilde p, \tilde q)$, we obtain
\[  \ind(D_0) - \ind(D_1) =  E(\tilde p, \tilde q)  \in K_0(\mathcal K(\widetilde {\mathcal H}_{C_r^\ast(\Gamma)}) ) \cong K_0(C_r^\ast(\Gamma)).\]
We point out that,  due to the presence of the term  $\tilde p - \tilde q$ in all the nonzero entries in $E_0(\tilde p, \tilde q)$ (cf. Formula $\eqref{eq:diff}$ in Section $\ref{sec:diff}$), a straightforward calculation shows that the entries $p_{23}, p_{32} $ and $p_{33}$ (resp. $q_{23}, q_{32} $ and $q_{33}$) in the matrix $\tilde p$  (resp. $\tilde q$) do not appear in $E_0(\tilde p, \tilde q)$. In other words, the summand $\mathcal H_3$ ``disappears'' when we pass to $E_0(\tilde p, \tilde q)$.

Let $p_1$ (resp. $q_1$) be an idempotent of finite propagation for $D_2$ (resp. $D_3$). Similarly, define $\widetilde p_1$ and $\widetilde q_1$ as above, but in the Hilbert module 
\[ \widetilde \vh'_{C^\ast_r(\Gamma)} = \mathcal H_1\oplus \mathcal H_2 \oplus \mathcal H'_3 \oplus \mathcal H_4 \]
where $ \mathcal H'_3 =  \vl^2(-Y_1, \mathcal S_1\otimes \mathcal V_1) $.  Note that 
\[ \vl^2(X_2, \mathcal S_0\times \mathcal V_0) = \mathcal H_1 \oplus \mathcal H_2 \oplus \mathcal H'_3 \textup{ and } \vl^2(X_3, \mathcal S_0\times \mathcal V_0) = \mathcal H_2 \oplus \mathcal H'_3 \oplus \mathcal H_4. \]
Then the difference construction gives 
\[  \ind(D_2) - \ind(D_3) =  E(\tilde p_1, \tilde q_1)  \in K_0(\mathcal K(\widetilde {\mathcal H}_{C_r^\ast(\Gamma)}) ) \cong K_0(C_r^\ast(\Gamma)).\]
Similarly, we see that the summand $\mathcal H'_3$ ``disappears'' when we pass to $E_0(\tilde p_1, \tilde q_1)$. In fact, we have $E_0(\tilde p_1, \tilde q_1)  =  E_0(\tilde p, \tilde q)$
as matrices of operators in  $\mathcal B(\vh_1 \oplus \vh_2 \oplus \vh_4)$. Therefore, we have 
\[ \ind(D_2) - \ind(D_3)  = \ind(D_0) - \ind(D_1). \]
Now since $D_3$ is the associated Dirac operator over a double, it follows from  Theorem $\ref{thm:double}$ below that $\ind(D_3) = 0$. This finishes the proof.  

\end{proof}

\section{Invertible Doubles}\label{sec:id}
In this section, we carry out an invertible double construction (Theorem $\ref{thm:double}$) for manifolds with (real or complex) $C^\ast$-vector bundles. This generalizes the invertible double construction for manifolds with classical vector bundles (i.e. $\mathbb C$-vector bundles or $\mathbb R$-vector bundles), cf. \cite[Chapter 9]{BBKW93}. For simplicity, we only state and prove the results for the complex case. The real case is proved by exactly the same argument. 

 Let $Y_1$ be an even dimensional complete manifold with boundary $N$, where $N$ is a closed manifold. Assume that the Riemannian metric on $Y_1$ has positive scalar curvature towards infinity. Denote by $\mathcal S_1$ a Clifford bundle over $Y_1$. Let $\mathcal V$ be a $\mathcal A$-bundle over $Y_1$, where $\mathcal A$ is a $C^\ast$-algebra. Assume all metrics have product structures near the boundary. We denote a copy of $Y_1$ with the reversed orientation by $Y_2 = -Y_1$ and denote the corresponding Clifford bundle by $\mathcal S_2$. We glue $Y_1$ and $Y_2$ along a tubular neighborhood of the boundary to obtain a double $\widetilde Y$ of $Y_1$. Now the bundles $\mathcal S_1\otimes \mathcal V$ and $\mathcal S_2\otimes \mathcal V$ are glued together by the Clifford multiplication $c(v)$, where $v =\frac{d}{du}$ is the inward unit normal vector near the boundary of $Y_1$. We denote the resulting bundle on $\widetilde Y$ by $\widetilde {\mathcal S}\otimes \widetilde {\mathcal V}$. Note that 
\[  \widetilde {\mathcal S}^{\pm}  = \mathcal S_1^{\pm} \cup_{c(v)} \mathcal S_2^{\mp}. \]
In particular, a section of $\widetilde S^+\otimes \widetilde {\mathcal V}$ can be identified with a pair $(s_1, s_2)$ such that $s_1$ is a section of $\mathcal S_1^+\otimes \mathcal V$, $s_2$ is a section of $S_2^-\otimes \mathcal V $ and near the boundary 
\[  s_2 = c(v) s_1. \]
Denote the Dirac operator over $Y_i$ by
\[ D_i^\pm : \Gamma(Y_i, \mathcal S_i^\pm \otimes \mathcal V) \to \Gamma(Y_i, \mathcal S_i^\mp\otimes \mathcal V). \] Then the Dirac operator $\widetilde D$ on $\widetilde Y$ is identified with 
\[  \widetilde D^\pm (s_1, s_2) =  (D^\pm_1 s_1, D^\mp_2 s_2). \]

\begin{theorem}\label{thm:double}
The operator $\widetilde D$ is bounded below, i.e., there exists a constant $C$ such that 
\[  \|\sigma\| \leq C \|\widetilde D \sigma\| \]
for all $\sigma \in \Gamma^\infty(\widetilde Y, \widetilde {\mathcal S}\otimes \mathcal V)$. In particular, the higher index class $\ind(\widetilde D)$ is zero. 	
\end{theorem}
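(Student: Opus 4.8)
The plan is to exploit the reflection symmetry of the double $\widetilde Y$ together with the Lichnerowicz-type identity $\widetilde D^2 = \nabla^\ast\nabla + \kappa/4$ from property (d) in Section \ref{sec:dirac}. First I would identify the involution on $\widetilde Y$ that swaps the two copies $Y_1$ and $Y_2 = -Y_1$, and track what it does to sections of $\widetilde{\mathcal S}\otimes\widetilde{\mathcal V}$. A section of $\widetilde{\mathcal S}^+\otimes\widetilde{\mathcal V}$ is a pair $(s_1,s_2)$ with $s_2 = c(v)s_1$ near $N$; the geometric reflection, combined with the appropriate Clifford-multiplication bundle identification, induces a bounded operator (indeed a unitary involution up to sign) $R$ on $\vl^2(\widetilde Y,\widetilde{\mathcal S}\otimes\widetilde{\mathcal V})$. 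The key structural fact I want is an anticommutation relation of the form $\widetilde D R = - R\widetilde D$ (or $R\widetilde D R^{-1} = -\widetilde D$), reflecting that the Dirac operator picks up a sign under orientation reversal combined with the $c(v)$-flip; this is exactly the mechanism behind the classical invertible double in \cite[Chapter 9]{BBKW93}.

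With that in hand, the strategy is a boundary-term argument. For $\sigma\in\Gamma^\infty_c$ supported away from infinity, integrate the Bochner identity $\langle\widetilde D^2\sigma,\sigma\rangle = \|\nabla\sigma\|^2 + \langle(\kappa/4)\sigma,\sigma\rangle$ over each half $Y_i$ separately; on each half there is a boundary contribution along $N$ coming from Green's formula (the term $\int_N \langle c(v)\widetilde D\sigma,\sigma\rangle$ or its $\nabla$-analogue). The point of the reflection symmetry is that these boundary terms from the two copies are designed to cancel — precisely because the gluing was done via $c(v)$ — so that no boundary integral survives, and one gets the clean global identity $\|\widetilde D\sigma\|^2 = \|\nabla\sigma\|^2 + \langle(\kappa/4)\sigma,\sigma\rangle$ on all of $\widetilde Y$. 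Then I would feed in the positive-scalar-curvature-towards-infinity hypothesis exactly as in the proof of Lemma \ref{lemma:lb}: choose a compactly supported $\rho\geq 0$ with $\kappa/4 + \rho^2 \geq c_0 > 0$ and $\|[\widetilde D,\rho]\|$ small, and conclude that $\widetilde D + \varepsilon\rho$ is bounded below; but here the extra input is that the boundary terms cancel, which upgrades the estimate on $\widetilde D$ itself. Actually, the cleanest route is: observe $\widetilde D^2$ is bounded below because on the compact region where $\kappa$ may be nonpositive, the reflection symmetry forces $\widetilde D^2 \geq \nabla^\ast\nabla + \kappa/4$ with no boundary defect and a Poincaré/Rellich argument controls the low modes — or more directly, the anticommuting involution $R$ together with $\widetilde D^2 = \nabla^\ast\nabla+\kappa/4$ and positivity at infinity rules out $0$ in the spectrum. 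Once $\widetilde D$ is bounded below it is invertible as a map $\vh^1\to\vl^2$, hence a generalized Fredholm operator with trivial kernel and cokernel, so $\ind(\widetilde D) = 0$ by the construction of the index class in Section \ref{sec:hic}.

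I expect the main obstacle to be making the reflection operator $R$ and the boundary-term cancellation rigorous in the Hilbert-module setting, including near the product-metric collar $(-\ell,0]\times N$ that was attached during the stretching. One must check that $R$ is genuinely adjointable over $C^\ast_r(\Gamma)$ (it is, being induced by an isometry of the base and a bundle isometry), that it exchanges the maximal domains correctly, and — the delicate part — that the Clifford relation $s_2 = c(v)s_1$ imposed along $N$ is exactly what is needed for the two Green's-formula boundary integrals to be negatives of each other rather than merely related. I would handle the collar by noting that on $(-\ell,0]\times N$ the metric is a product and $\widetilde D$ takes the form $c(v)(\partial_u + B_N)$ for the boundary Dirac operator $B_N$ on $N$, which makes the reflection $u\mapsto -u$ and the sign bookkeeping completely explicit. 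A secondary technical point is that, strictly, the earlier construction produced index classes via \emph{almost} idempotents of finite propagation rather than honest projections; but since $\widetilde D$ is actually invertible we may bypass that machinery entirely — the operator $\widetilde D(\widetilde D^2)^{-1/2}$ is a genuine unitary, its "odd part" $U$ satisfies $U^\ast U = UU^\ast = 1$, and the associated idempotent $p$ literally equals $\left(\begin{smallmatrix}1&0\\0&0\end{smallmatrix}\right)$, giving $\ind(\widetilde D)=[p]-\left[\left(\begin{smallmatrix}1&0\\0&0\end{smallmatrix}\right)\right] = 0$.
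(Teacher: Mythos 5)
There is a genuine gap, and it is located exactly where you wave your hands. The Bochner--Lichnerowicz identity $\|\widetilde D\sigma\|^2 = \|\nabla\sigma\|^2 + \langle(\kappa/4)\sigma,\sigma\rangle$ holds on $\widetilde Y$ automatically for every compactly supported smooth $\sigma$, because $\widetilde Y$ \emph{has no boundary}; no reflection symmetry, no $c(v)$-gluing trick, and no cancellation of boundary terms is needed for this. So the reflection buys you nothing at that step. And the resulting identity does \emph{not} imply that $\widetilde D$ is bounded below, because the scalar curvature $\kappa$ may be nonpositive on a compact subset: the identity only says $\|\widetilde D\sigma\|^2 \geq \langle(\kappa/4)\sigma,\sigma\rangle$, which is vacuous wherever $\kappa\leq 0$. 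Your two proposed repairs both fail. ``A Poincar\'e/Rellich argument controls the low modes'' is not a proof; Rellich compactness gives you nothing on a complete noncompact manifold, and the relevant kernel of $\widetilde D$ is a priori a Hilbert module, not a finite-dimensional space you can exclude by counting. And ``the anticommuting involution $R$ rules out $0$ in the spectrum'' is simply false: an anticommuting involution forces $\mathrm{spec}(\widetilde D)$ to be symmetric under $\lambda\mapsto-\lambda$, which is entirely compatible with $0\in\mathrm{spec}(\widetilde D)$. You also have the symmetry statement slightly wrong: the reflection $J$ on the double \emph{commutes} with $\widetilde D$ (so by itself it only gives $2\,\ind(\widetilde D)=0$); the operator that anticommutes is $E = iJ\varepsilon$, which uses the imaginary unit and is therefore unavailable in the real case that the paper insists on covering.

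The idea you are missing is a quantitative \emph{unique continuation} estimate. The paper's proof combines three inputs: (i) a divergence-theorem bound $\|\sigma|_N\|^2 \leq K_0\|\widetilde D\sigma\|\,\|\sigma\|$, coming precisely from the nonvanishing (not cancelling!) boundary term $\int_N\langle\sigma_1,\sigma_1\rangle$ produced by the $c(v)$-gluing; (ii) a Carleman-weight estimate (Lemma \ref{lemma:ineq}) that controls $\|\sigma\|$ on a tubular neighborhood of $N$ by $\|\widetilde D\sigma\|$ and $\|\sigma|_N\|$; and (iii) the propagation estimate of Theorem \ref{thm:sn}/Corollary \ref{cor:sn}, which says that on a region where the manifold is compact modulo the psc end, control of $\sigma$ on one open set plus control of $\widetilde D\sigma$ controls $\sigma$ everywhere. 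This last item is the real content of the invertible-double theorem; it is a $C^\ast$-module version of the classical unique-continuation argument from \cite[Chapter 9]{BBKW93}, and it is what handles the compact region of possibly nonpositive $\kappa$. None of it is replaced by reflection symmetry. Finally, note that what you would get from the $E = iJ\varepsilon$ homotopy is only $\ind(\widetilde D)=0$ by deforming $\widetilde D$ to the invertible operator $\widetilde D + tE$, \emph{not} that $\widetilde D$ itself is bounded below, which is the stronger statement the theorem actually asserts (and which is used elsewhere in the paper via finite propagation). The paper explicitly presents this reflection argument as a \emph{separate remark}, valid only in the complex case and only for the index vanishing, precisely because it does not reproduce the theorem.
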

\begin{proof}
Since each $\sigma \in \Gamma^\infty(\widetilde Y, \widetilde {\mathcal S}^+\otimes \widetilde {\mathcal V)}$ can be identified with a pair $(\sigma_1, \sigma_2)$ such that $\sigma_1$ is a section of $S_1^+\otimes \mathcal V$, $\sigma_2$ is a section of $S_2^-\otimes \mathcal V $ and near the boundary 
\[  \sigma_2 = c(v) \sigma_1. \]
Therefore, by divergence theorem, we have 
\begin{align}
&\int_{Y_1} \langle D^+\sigma_1, \sigma_2\rangle - \int_{Y_1} \langle \sigma_1, D^-\sigma_2 \rangle = - \int_{\partial Y_1} \langle c(v) \sigma_1 , \sigma_2 \rangle = \int_{N} \langle \sigma_1, \sigma_1\rangle.\label{eq:div}
\end{align}
It follows that there exists $k_1 >0$ such that 
\begin{align*}
&\|\sigma|_{N} \|^2 = \int_{N} \langle \sigma_1, \sigma_1\rangle \leq k_1 \|\widetilde D^+ \sigma\| \|\sigma\| 
\end{align*}
Similarly, we have $\|\sigma|_N\|^2 \leq    k_2 \| \widetilde D^- \sigma\|  \|\sigma\|$
for all $\sigma \in \Gamma^\infty(\widetilde Y , \widetilde{\mathcal S}^-\otimes \widetilde {\mathcal V})$.  Therefore, there exist a constant $K_0$ such that 
\begin{equation}\label{eq:divineq}
  \|\sigma|_N\|^2 \leq  K_0 \| \widetilde D \sigma\|\|\sigma\| 
\end{equation}
for all $\sigma \in \Gamma^\infty(\widetilde Y , \widetilde {\mathcal S}\otimes \widetilde{\mathcal V})$.

Let $\Omega = (-\delta, \delta)\times N$ be a small tubular neighborhood of $N$ in $\widetilde Y$. Denote by $N_u = \{u\}\times N$ for $u\in (\delta, \delta)$. On the cylinder $ (-\delta, \delta)\times N$, we have 
\[  \widetilde D = c(u)(\frac{d}{du} + A) \]
where $c(u)$ is the Clifford multiplication of the normal direction $\frac{d}{du}$ and $A$ is the Dirac operator on $N$. So we have a situation which is a special case of Lemma $\ref{lemma:ineq}$ below. It follows immediately from Lemma $\ref{lemma:ineq}$ that there exists a constant $K_1$ such that 
\[  \|\sigma|_{\Omega}\|^2 \leq K_1\left(\|(\widetilde D\sigma)|_\Omega\|^2 + \|\sigma|_N\|^2 \right)\]
for all $\sigma\in \in \Gamma^\infty(\widetilde Y , \widetilde {\mathcal S}\otimes \widetilde{\mathcal V})$. Combined with the inequality $\eqref{eq:divineq}$, this implies that 
\[ \|\sigma|_{\Omega}\|^2 \leq K_2 (\|\widetilde D\sigma\|^2 +  \| \widetilde D \sigma\|\|\sigma\| ).\]
Now by the technical estimate (or rather its corollary $\ref{cor:sn}$) below, we have $  \|\sigma\| \leq C_1 \|\sigma|_\Omega\| + C_2\|\widetilde D\sigma\|$,  or equivalently, 
\[  \|\sigma\|^2 \leq C'_1 \|\sigma|_\Omega\|^2 + C'_2\|\widetilde D\sigma\|^2.\]
Therefore there exists a constant $C_0 >0$ such that  
\[  \|\sigma\|^2 \leq C_0(\|\widetilde D \sigma\|\|\sigma\| + \|\widetilde D\sigma\|^2) \]
i.e. 
\[   \frac{\|\widetilde D \sigma\|}{\|\sigma\|} + \left(\frac{\|\widetilde D\sigma\|}{\|\sigma\|}\right)^2 \geq \frac{1}{C_0} \]
for all nonzero $\sigma \in \Gamma^\infty(\widetilde Y , S\otimes \mathcal V)$. So $\inf_{\sigma \neq 0} \frac{\|\widetilde D \sigma\|}{\|\sigma\|} > 0$.  This finishes the proof.

\end{proof}

\begin{remark}
The above proof works for all dimensions, with obvious modifications by using $\cl_n$-linear Dirac operators as in Remark $\ref{remark:dim}$. 
\end{remark}

\begin{remark}
We point out that there is a natural (orientation reversing) reflection on the double $\widetilde Y$.  The reflection commutes with the Dirac operator $\widetilde D$ on $\widetilde Y$. Using this, one sees that twice of the higher index class of $\widetilde D$ is zero, that is, $ 2\,\ind(\widetilde D) = 0 $ (for both the real and the complex cases).  
\end{remark}

\begin{remark}
We emphasize that the above proof works for both the real and the complex cases. 
\end{remark}

\begin{remark}
In the \textit{complex} case, there is in fact a  simpler way to show that $\ind(\widetilde D) =0$.  We thank Ulrich Bunke for pointing this out to us. We provide the argument in the following. Note that however this argument does not work in the real case. Let $\widetilde Y$ be as above (of even dimension). Denote the grading operator on $\widetilde {\mathcal S}$ by $\varepsilon$ and the reflection on $\widetilde Y$ (and its induced action on $\widetilde{\mathcal S}\otimes \widetilde{\mathcal V}$) by $J$. Define $ E = i  J \varepsilon$. Notice that $J$ anticommutes with $\varepsilon$ and commutes with $\widetilde D$. So $E^2 =1 $ and $E\widetilde D + \widetilde D E =0$.  Then, since $ (\widetilde D + tE)^2 = \widetilde D^2 + t^2$ is invertible when $t\in (0, 1]$, we see that $\widetilde D + tE$ is invertible for all $t\in(0, 1]$. Therefore, $\ind (\widetilde D) = \ind(\widetilde D + tE) = 0$ by homotopy invariance of the index map. 
\end{remark}

\subsection{A technical theorem}
In this subsection, we prove the technical estimate that was used in the proof of Theorem $\ref{thm:double}$. 

First let us consider the case of compact manifolds. Let $X$ be a compact Riemannian manifold and $\mathcal S$ a $\cl(TX)$-bundle with $\cl(TX)$-compatible connection. Let $\mathcal A$ be a $C^\ast$-algebra and let $\mathcal V$ be a $\mathcal A$-bundle over $X$. Denote by $D$ the associated generalized Dirac operator 
\[ D: \Gamma^\infty(X; \mathcal S\otimes \mathcal V) \to \Gamma^\infty(X;  \mathcal S\otimes \mathcal V). \]
Denote by $d(\cdot, \cdot)$ the Riemannian distance on $X$.  Then for $\lambda >0$, we define
 \[ \Omega^\lambda = \{ x\in X \mid d(x, \Omega) < \lambda \}\]
for  any open subset $\Omega$ of $X$. In the following, $\langle \, ,  \rangle $ stands for the $\mathcal A$-valued Hermitian product on $\mathcal V$ and  $\| \cdot \|$ denotes the $L^2$-norm on $\vl^2(X; \mathcal S\otimes \mathcal V)$, unless otherwise specified. 

\begin{theorem}\label{thm:sn}
With the above notation, fix an open subset $\Omega$ of $X$. Then there are constants $C_1$ and $C_2$ such that 
\[  \|\sigma\| \leq  C_1 \|\sigma|_\Omega \| + C_2\|D\sigma\| \] 
for all $\sigma\in \Gamma^\infty(X; \mathcal S\otimes \mathcal V)$. Here $\sigma|_\Omega$ is the restriction of $\sigma$ to $\Omega$. 
\end{theorem}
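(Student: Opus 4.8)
The plan is to obtain this quantitative weak unique continuation estimate from a local Carleman inequality for $D$, a chaining (propagation of smallness) argument, and a final absorption step; compactness of $X$ makes all the constants uniform. One should note that the naive route fails: if $\|\sigma_n\|=1$, $\|\sigma_n|_\Omega\|\to 0$, $\|D\sigma_n\|\to 0$, then $\sigma_n$ is bounded in $\vh^1(X;\mathcal S\otimes\mathcal V)$ by elliptic regularity, but one cannot extract an $\vl^2$-convergent subsequence because Rellich compactness fails for sections with $C^\ast$-module coefficients. So a genuinely quantitative argument is needed. We may assume $X$ is connected and $\Omega\neq\emptyset$.

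The key step, and the one I expect to be the main obstacle, is a local Carleman estimate: there exist $r_0>0$ and constants $c,\tau_0>0$ so that for each $x_0\in X$ one can choose a smooth weight $\psi=\psi_{x_0}$ on the geodesic ball $B=B(x_0,r_0)$ --- for instance $\psi=e^{\lambda\varphi}$ with $\lambda$ large and $\varphi$ a function with non-vanishing gradient --- such that
\[ \tau\int_{B}e^{2\tau\psi}\,\langle u,u\rangle\;\leq\;c\int_{B}e^{2\tau\psi}\,\langle Du,Du\rangle \]
as an inequality of positive elements of $\mathcal A$, for all $\tau\geq\tau_0$ and all $u\in\Gamma^\infty_c(B;\mathcal S\otimes\mathcal V)$ supported in the interior of $B$. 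Taking $C^\ast$-norms, the left side becomes $\tau\|e^{\tau\psi}u\|^2$ and the right side $c\|e^{\tau\psi}Du\|^2$, so one recovers the usual scalar Carleman inequality. The reason the $C^\ast$-module coefficients do not obstruct this is that the classical derivation --- via the Lichnerowicz-type identity $D^2=\nabla^\ast\nabla+\mathcal R$, integration by parts, the Cauchy--Schwarz inequality for the $\langle\,,\,\rangle$-pairing, and the convexity of the weight --- uses only manipulations of the $\mathcal A$-valued Hermitian form and positivity in $\mathcal A$, so the coefficients come along for the ride; the zeroth-order term $\mathcal R$ and the passage from $D$ to $D^2$ are absorbed into $c$ by enlarging $\tau_0$. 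What must be verified is precisely that no step secretly uses scalar coefficients, and that $r_0$, $c$, $\tau_0$ can be chosen uniformly in $x_0$, which follows from compactness of $X$ together with continuity of the metric, the connection, and the curvature endomorphism.

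From here the argument is standard. Inserting $u=\chi\sigma$ for a cutoff $\chi$ that is $1$ on a smaller ball and $0$ outside a slightly larger one --- so that $[D,\chi]=c(d\chi)$ is supported in an annulus --- and optimizing over $\tau\geq\tau_0$ yields a three-ball inequality: for concentric $B_1\subset B_2\subset B_3\subset B(x_0,r_0)$ there are $C>0$ and $\theta\in(0,1)$, uniform in $x_0$, with
\[ \|\sigma\|_{B_2}\;\leq\;C\big(\|\sigma\|_{B_1}+\|D\sigma\|\big)^{\theta}\big(\|\sigma\|_{B_3}+\|D\sigma\|\big)^{1-\theta} \]
for all $\sigma\in\Gamma^\infty(X;\mathcal S\otimes\mathcal V)$. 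Since $X$ is compact and connected, one covers it by finitely many such ball-triples forming a chain whose first ball lies inside $\Omega$; iterating the three-ball inequality along the chain gives
\[ \|\sigma\|\;\leq\;C'\big(\|\sigma|_\Omega\|+\|D\sigma\|\big)^{\mu}\big(\|\sigma\|+\|D\sigma\|\big)^{1-\mu} \]
for some $\mu\in(0,1)$. Finally a weighted Young inequality absorbs the factor $\|\sigma\|^{1-\mu}$ on the right into the left-hand side, leaving, after rearranging, $\|\sigma\|\leq C_1\|\sigma|_\Omega\|+C_2\|D\sigma\|$, which is the assertion.
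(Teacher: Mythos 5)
Your plan is a valid route to the same result, broadly in the Carleman-estimate spirit of the paper's own proof but packaged differently. The paper (following Boo\ss-Bavnbek--Wojciechowski's Theorem 8.2) proves an \emph{annular} Carleman inequality with a boundary term (Lemma~$\ref{lemma:ineq}$: weight $e^{R(T-u)^2}$ on a spherical shell, with a term on the inner sphere $\mathbb S_0$), pairs it with an $\mathcal A$-valued trace estimate (Lemma~$\ref{lemma:res}$), and obtains the \emph{linear} propagation estimate $\|\sigma|_{\Omega^\lambda}\|\leq K_1\|\sigma|_\Omega\|+K_2\|D\sigma\|$ (Claim~$\ref{claim}$), which it then iterates finitely many times by compactness -- no absorption step is needed. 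You instead propose an \emph{interior} Carleman estimate for compactly supported sections, extract a multiplicative three-ball inequality, chain it across a covering, and then absorb the residual $\|\sigma\|^{1-\mu}$ by a weighted Young inequality. Your observation at the outset is a good one and worth keeping: the naive compactness argument (extract an $\vl^2$-convergent subsequence of a bounded $\vh^1$ sequence) does not apply because Rellich compactness fails for Hilbert-$\mathcal A$-module coefficients, so a quantitative estimate is forced.

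The gap you correctly flag is also the one that carries the weight of the argument: the $\mathcal A$-valued Carleman inequality $\tau\int_B e^{2\tau\psi}\langle u,u\rangle\leq c\int_B e^{2\tau\psi}\langle Du,Du\rangle$ is asserted, not proved. This is not a formality. For a first-order Dirac-type operator, the conjugated operator $D_\tau=e^{\tau\psi}De^{-\tau\psi}=D-\tau c(d\psi)$ must be shown bounded below with the right $\tau$-scaling; the cross term $(\nu,[D,c(d\psi)]\nu)$ produced by integration by parts has no definite sign a priori and is controlled only via the pseudoconvexity of $\psi$, and all positivity arguments must be run in the $\mathcal A$-valued pairing, not in $\mathbb C$. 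The paper's analogue of this step (Lemma~$\ref{lemma:ineq}$) is precisely where the work happens, and it needs the $\mathcal A$-valued G\aa rding inequality (Lemma~$\ref{lemma:garding}$), care with the non-self-adjointness of the tangential operator $\bd_u$ after freezing the metric, and a uniform choice of $R$ and $T$ over a covering of $X$. So the proposal is sound as an outline and identifies the right obstacle, but until that Carleman estimate is established in the $\mathcal A$-module setting the proof is incomplete; if you do establish it, the rest (three-ball inequality, chaining, absorption) goes through by the standard quantitative unique continuation machinery, modulo the usual bookkeeping that all constants are taken in norm after the $\mathcal A$-valued positivity has been exploited.
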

\begin{proof}
We reduce the theorem to the following claim.
\begin{claim}\label{claim}
There exists a constant $\lambda> 0$ such that for any open subset $\Omega\subset X$, there exist constants $K_{\Omega, 1}$ and $ K_{\Omega, 2}$ such that 
\[  \|\sigma|_{\Omega^\lambda}\| \leq  K_{\Omega, 1}\|\sigma|_{\Omega}\| + K_{\Omega, 2} \|D\sigma\|  \]
for all $\sigma\in \Gamma^\infty(X; \mathcal S\otimes \mathcal V)$.
\end{claim}

Indeed, let $\lambda$ be the constant from the claim. Denote $\Omega_1 = \Omega^\lambda$, then there are constants $k_1$ and $k_2$ such that 
\[  \|\sigma|_{\Omega_1}\| \leq k_1 \|\sigma|_{\Omega}\| + k_2 \|D \sigma\|  \]
for all $\sigma\in \Gamma^\infty(X; \mathcal S\otimes \mathcal V)$. Similarly, let $\Omega_2 = \Omega_1^{\lambda}$, then there are constants $k_3$ and $ k_4$ such that 
 \[  \|\sigma|_{\Omega_2}\| \leq k_3 \|\sigma|_{\Omega_1}\| + k_4 \|D\sigma\|  \]
for all $\sigma\in \Gamma^\infty(X; \mathcal S\otimes \mathcal V)$.

\noindent It follows immediately that 
\begin{align*}
&\|\sigma|_{\Omega_2}\| \leq k_3 \|\sigma|_{\Omega_1}\| + k_4 \|D\sigma\| \\
& \leq k_4 \|D\sigma\| +  k_3 \left(k_1 \|\sigma|_{\Omega}\| + k_2 \|D\sigma\| \right) \\
&= k_1k_3 \|\sigma|_{\Omega}\| +  (k_4 + k_2k_3) \|D\sigma\|
\end{align*}
Inductively, we define $\Omega_{k+1} = \Omega_k^\lambda$. Since $X$ is compact, there exists an integer $n $ such that $\Omega_n = X$. The theorem follows by a finite induction. 
\end{proof}

\begin{remark}
 The constant $\lambda$ is independent of the choice of $\Omega$, although the constants $K_{\Omega, 1}$,  $K_{\Omega, 2}$ and $K_{\Omega, 3}$ may depend on $\Omega$,.  
\end{remark}

Now we shall generalize the above theorem to the case of complete manifolds with positive scalar curvature towards infinity. 

\begin{corollary}\label{cor:sn}
Let $X$ be a spin manifold with a complete Riemannian metric of positive scalar curvature towards infinity. Suppose $\Omega$ is an open subset of $X$ with compact closure. Then there are constants $C_1$ and $C_2$ such that 
\[  \|\sigma\| \leq  C_1 \|\sigma|_\Omega \| + C_2\|D\sigma\|  \] 
for all $\sigma\in \Gamma^\infty(X; \mathcal S\otimes \mathcal V)$.
\end{corollary}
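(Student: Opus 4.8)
The plan is to bootstrap from the compact case (Theorem~\ref{thm:sn}) to the complete manifold with positive scalar curvature towards infinity, using the invertibility of $D+\varepsilon\rho$ established in Lemma~\ref{lemma:lb} to control $\sigma$ on the noncompact part. First I would write $X = X_{\textup{cpt}} \cup X_\infty$ where $X_\infty$ is a region on which the scalar curvature $\kappa$ satisfies $\kappa/4 \geq k_0/4 > 0$, and enlarge $\Omega$ if necessary so that $\Omega$ together with the bounded part covers a large compact piece; concretely, pick a compact set $K$ with $X-K \subset \{\kappa/4 > k_0/4\}$ and $\overline\Omega \subset K^\circ$, and choose a cutoff.

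The key steps, in order: (1) On the region where $\kappa > 0$, the Bochner--Lichnerowicz formula $D^2 = \nabla^\ast\nabla + \kappa/4$ gives, for any $\sigma$ supported away from $K$, the estimate $\|D\sigma\|^2 \geq \tfrac{k_0}{4}\|\sigma\|^2$ (this is the usual Lichnerowicz argument, exactly as in the proof of Lemma~\ref{lemma:lb} but without needing $\rho$, since $\kappa$ is already positive there); more care is needed for $\sigma$ not compactly supported in the end, so I would instead use a smooth cutoff $\psi$ equal to $1$ on $X - K'$ and $0$ on a slightly smaller compact set, with $[D,\psi]$ bounded and supported in $K' - K$, and apply the Lichnerowicz estimate to $\psi\sigma$, absorbing the commutator term. (2) This yields $\|\sigma|_{X-K'}\| \leq C\|D\sigma\| + C'\|\sigma|_{K'-K}\|$, i.e., we control $\sigma$ on the noncompact end by $D\sigma$ plus $\sigma$ restricted to a compact collar. (3) Now invoke the compact case: apply Theorem~\ref{thm:sn} to a compact manifold $X'$ (for instance the double of a large compact piece containing $K'$, or $K'$ with a suitable closed extension) with the open set $\Omega$, obtaining $\|\sigma|_{K'}\| \leq C_1\|\sigma|_\Omega\| + C_2\|D\sigma\|$; here one must be slightly careful that $D$ on the compact model agrees with $D$ on $X$ over $K'$, which holds since the construction is local. (4) Combine (2) and (3): the term $\|\sigma|_{K'-K}\| \leq \|\sigma|_{K'}\|$ is dominated, and adding the contributions from $K'$ and from $X - K'$ gives $\|\sigma\| \leq C_1'\|\sigma|_\Omega\| + C_2'\|D\sigma\|$.

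The main obstacle I expect is step~(3): transferring the compact-case estimate to a genuinely compact model while ensuring the Dirac operators match over the relevant region. The clean way is to observe that the compact statement Theorem~\ref{thm:sn} was proved by the propagation-style Claim~\ref{claim} (the local inequality $\|\sigma|_{\Omega^\lambda}\| \leq K_{\Omega,1}\|\sigma|_\Omega\| + K_{\Omega,2}\|D\sigma\|$), and that local inequality is purely local in nature — its proof does not use compactness of the ambient manifold, only a uniform lower bound on injectivity radius and geometry, which holds here because $X-K$ has bounded geometry with positive scalar curvature. So rather than passing to an artificial compact double, I would re-run the finite-induction argument of Theorem~\ref{thm:sn} directly on $X$: start from $\Omega$, enlarge by $\lambda$ at each step, and after finitely many steps cover the compact set $K'$; then one further estimate of type~(2) handles the infinite end in a single stroke, because there $\kappa/4$ is bounded below so the Bochner estimate gives the needed inequality on all of $X - K'$ at once (no induction needed out there). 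This hybrid — finitely many local propagation steps to exhaust the compact core, one Lichnerowicz estimate for the positively-curved end — is the cleanest route and avoids any delicate gluing of Dirac operators.
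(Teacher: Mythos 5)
Your proposal is correct and follows essentially the same route as the paper: decompose $X$ into a precompact core (containing $\overline\Omega$) plus an end on which $\kappa$ is uniformly positive, control the core by the finite-induction argument behind Theorem~\ref{thm:sn} (via Claim~\ref{claim}), control the end by a Lichnerowicz-type lower bound, and add. In fact you are more careful than the paper on two points that it glosses over. First, the paper asserts \(\|(D\sigma)|_{X-\Sigma}\|\ge c\,\|\sigma|_{X-\Sigma}\|\) ``by the same argument as in Lemma~\ref{lemma:lb}'' for \emph{all} $\sigma$, but that argument integrates by parts, and over $X-\Sigma$ this produces boundary terms on $\partial\Sigma$ which need not have a sign; your smooth cutoff $\psi$ (applying the Lichnerowicz estimate to $\psi\sigma$ and absorbing the commutator $[D,\psi]$, which is bounded and supported on a precompact collar) is exactly the right fix, and it is the standard way this estimate is made rigorous. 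Second, the paper invokes Theorem~\ref{thm:sn} even though that theorem is stated for compact $X$; as you observe, what one really wants is either to re-run the finite induction of Claim~\ref{claim} directly on $X$ until the precompact core is exhausted (which works, since the claim is local and needs only finitely many steps for a precompact set), or to pass to a compact model and check agreement of the Dirac operators. Your preference for the in-situ finite induction is cleaner and avoids any gluing subtleties. So the substance is the same, but your write-up plugs two small rigor gaps in the paper's own proof.
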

\begin{proof}
Since $X$ has positive scalar curvature towards infinity, there exists an precompact open subset $\Sigma\subset X$ such that 
\begin{enumerate}[(1)]
\item $\Omega \subset\Sigma$, 
\item the scalar curvature $\kappa \geq c_0>0$ on $X - \Sigma$.
\end{enumerate}

The same argument as in the proof of Lemma $\ref{lemma:lb} $ shows that  there exists a constant $c$ such that 
\[  \| D \sigma\| \geq \|(D\sigma)|_{X-\Sigma} \| \geq c\|\sigma|_{X-\Sigma}\| \]
for all $\sigma\in \Gamma^\infty(X; \mathcal S\otimes \mathcal V)$.
Now since the closure of $\Sigma$ is compact, it follows from Theorem $\ref{thm:sn}$ that 
\[  \|\sigma|_\Sigma \| \leq  c_1 \|\sigma|_\Omega \| + c_2\|D\sigma\| \] 
for all $\sigma\in \Gamma^\infty(X; \mathcal S\otimes \mathcal V)$. Therefore we have 
\[  \|\sigma\| \leq \|\sigma|_{X-\Sigma}\| + \|\sigma|_\Sigma \| \leq  C_1 \|\sigma|_\Omega \| + C_2\|D\sigma\|. \] 
\end{proof}

\subsection{Proof of Claim \ref{claim} }

In this subsection, we prove the claim in the proof of Theorem $\ref{thm:sn}$. Our argument is inspired by  the proof of $\cite[Theorem ~ 8.2]{BBKW93}$.

With the same notation from the previous subsection, let $x_0 \in \partial \Omega$. Choose  $r_0 >0$ sufficiently small and $p\in \Omega$ at a distance $r_0$ from $x_0$ such that the ball $B(p_0; r_0 )$ with center at $p_0$ and radius $r_0$ is contained in $\Omega$.  Choose spherical coordinates in a small neighborhood of $p_0$. Denote the ball with center at $p_0$ and radius  $r$ by $B(p_0; r)$. See Figure $\ref{fig:spherical}$ below.
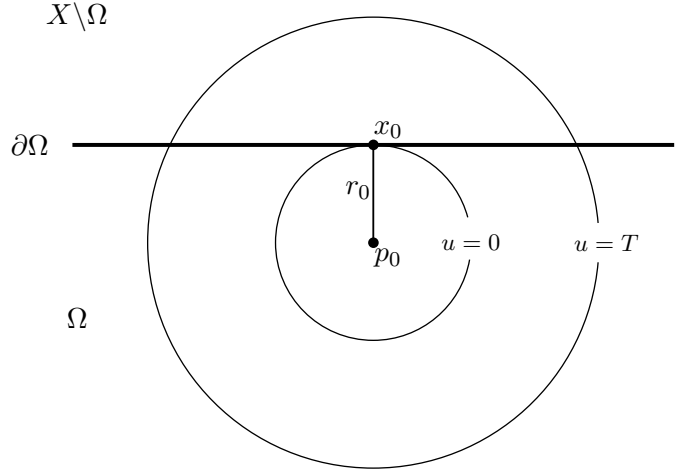
\begin{figure}[h]
\centering
\begin{pspicture*}(-1,-3.5)(8,4)
\psarc[linewidth=.5pt](4, 0){1.3}{15}{350}
\psarc[linewidth=.5pt](4, 0){3}{5}{355}
\psline[linewidth=1.5pt](0,1.3)(8,1.3)
\rput(-0.5,1.3){$\partial \Omega$ }
\rput(0,-1){ $\Omega$}
\rput(0,3){ $X\backslash \Omega$}
\rput(5.3,0){\footnotesize $u=0$}
\rput(7.1,0){\footnotesize $u=T$}
\rput(4.2,1.5){$x_0$}
\psdots[dotsize=4pt](4,1.3)
\psdots[dotsize=4pt](4,0)
\psline[linewidth=.7pt](4,1.3)(4,0)
\rput(3.8,0.7){$r_0$}
\rput(4.2, -0.2){$p_0$}
\end{pspicture*} 
\caption{local spherical coordinates}\label{fig:spherical}
\end{figure}

Let $B = B(p_0; r_0)$   and $\partial B$ its boundary. 
We define an $\mathcal A$-valued inner product 
\[  \lcn \sigma, \eta\rcn_{s} = \int_B \langle (1+ \Delta)^s\sigma(x), \eta(x)\rangle dx \] for all $\sigma, \eta \in \vh^s(B, \mathcal S\otimes \mathcal V)$ (resp. for all $\sigma, \eta \in \vh^s(\partial B, \mathcal S\otimes \mathcal V)$) , where $\Delta$ is the Laplacian operator on $B$ (resp. $\partial B$). 

\begin{lemma}\label{lemma:res}
For $k \geq 1 $,  we have 
\[ \lcn \sigma|_{\partial B}, \sigma|_{\partial B}\rcn_{k-1/2} \leq C \lcn \sigma, \sigma \rcn_k\]
for all $\sigma \in \vh^k(B, \mathcal S \otimes \mathcal V)$.
\end{lemma}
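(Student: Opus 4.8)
The plan is to re-run the classical proof of the trace (boundary Sobolev) estimate, the one behind \cite[Theorem~8.2]{BBKW93}, verbatim in the Hilbert-module setting, treating the $\mathcal A$-valued pairings $\lcn\,\cdot\,,\,\cdot\,\rcn_s$ the way one treats ordinary $L^2$-inner products. The whole thing rests on two elementary facts. First, if $T$ is a bounded operator between Hilbert spaces with $\|T\|\le c$, then its amplification $T\otimes 1_{\mathcal A}$ on the corresponding Hilbert $\mathcal A$-modules satisfies the \emph{operator} inequality $\lcn (T\otimes 1_{\mathcal A})\xi,(T\otimes 1_{\mathcal A})\xi\rcn \le c^{2}\,\lcn \xi,\xi\rcn$ in $\mathcal A$. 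Second, Cauchy--Schwarz holds for $\mathcal A$-valued inner products; in particular, $-\big(\langle a,b\rangle+\langle b,a\rangle\big)\le \epsilon^{-1}\langle a,a\rangle+\epsilon\,\langle b,b\rangle$ for every scalar $\epsilon>0$, since $0\le\langle\epsilon^{-1/2}a+\epsilon^{1/2}b,\ \epsilon^{-1/2}a+\epsilon^{1/2}b\rangle$.

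First I would reduce to a model situation. Using a finite partition of unity subordinate to coordinate charts that trivialize $\mathcal S\otimes\mathcal V$ and straighten $\partial B$ to a piece of a hyperplane, and using that multiplication by a fixed smooth compactly supported function is bounded on each classical $\vh^{s}$ (hence, after amplification, obeys the corresponding $\mathcal A$-valued bound, and that $\lcn\sum_{j}a_{j},\sum_{j}a_{j}\rcn\le N\sum_{j}\lcn a_{j},a_{j}\rcn$), it suffices to prove the inequality for sections supported in a half-ball of $\mathbb R^{n}$, with $\partial B$ replaced by $\mathbb R^{n-1}\times\{0\}$ and with $\lcn\,\cdot\,,\,\cdot\,\rcn_{s}$ replaced by the flat Euclidean versions. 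The passage between the intrinsic pairings (built from the connection Laplacian of $\mathcal S\otimes\mathcal V$, resp.\ its restriction to $\partial B$) and these flat model pairings costs only $\mathcal A$-valued norm-equivalences: a scalar comparison $c_{1}(1+\Delta_{1})^{s}\le (1+\Delta_{2})^{s}\le c_{2}(1+\Delta_{1})^{s}$ of elliptic operators amplifies, by the first fact above, to a two-sided $\mathcal A$-valued comparison of the associated Sobolev inner products, and such scalar comparisons (for different elliptic operators, different metrics, integral and fractional $s$) are standard from G\r{a}rding's inequality and the pseudodifferential calculus, which is available over $C^{\ast}$-algebras by Mishchenko--Fomenko. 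Finally, since a finitely generated projective $\mathcal A$-module is a direct summand of some $\mathcal A^{m}$, and both the pairings and the restriction map are compatible with such a summand inclusion, it is enough to treat $\mathcal A$-valued functions.

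In the model case I would finish in either of two equivalent ways. One is to note that $\vh^{k}(\mathbb R^{n}_{+},\mathcal A)$ is the spatial tensor product $\vh^{k}(\mathbb R^{n}_{+})\otimes\mathcal A$ of Hilbert modules, that the classical trace operator $T\colon \vh^{k}(\mathbb R^{n}_{+})\to\vh^{k-1/2}(\mathbb R^{n-1})$ is bounded with norm $\sqrt{C}$, and that $T\otimes 1_{\mathcal A}$ is exactly $\sigma\mapsto\sigma|_{\mathbb R^{n-1}\times\{0\}}$, so the first fact yields $\lcn\sigma|_{\partial},\sigma|_{\partial}\rcn_{k-1/2}\le C\,\lcn\sigma,\sigma\rcn_{k}$. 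The other is to redo the one-line computation directly: take the partial Fourier transform $\widehat\sigma(\xi',x_{n})$ in the tangential variables (a Hilbert-module unitary by Plancherel for $\mathcal A$-valued functions), write $\widehat\sigma(\xi',0)=-\int_{0}^{\infty}\partial_{x_{n}}\widehat\sigma(\xi',x_{n})\,dx_{n}$, and estimate
\[
\langle\widehat\sigma(\xi',0),\widehat\sigma(\xi',0)\rangle
=-\int_{0}^{\infty}\!\big(\langle\partial_{x_{n}}\widehat\sigma,\widehat\sigma\rangle+\langle\widehat\sigma,\partial_{x_{n}}\widehat\sigma\rangle\big)dx_{n}
\le\int_{0}^{\infty}\!\big(\langle\xi'\rangle^{-1}\langle\partial_{x_{n}}\widehat\sigma,\partial_{x_{n}}\widehat\sigma\rangle+\langle\xi'\rangle\langle\widehat\sigma,\widehat\sigma\rangle\big)dx_{n}
\]
by the $\mathcal A$-valued Cauchy--Schwarz inequality with $\epsilon=\langle\xi'\rangle$; multiplying by the scalar weight $\langle\xi'\rangle^{2k-1}\ge 0$, integrating in $\xi'$, and invoking Plancherel bounds the left side by $C\,\lcn\sigma,\sigma\rcn_{k}$. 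The hard part will not be any single estimate but the bookkeeping of the reduction: one must be disciplined that every scalar inequality invoked along the way (boundedness of multiplication operators, equivalence of Sobolev norms, the trace bound itself) is literally of the form $\|T\|\le c$ for an honest operator $T$ between Hilbert spaces, so that it legitimately amplifies to an $\mathcal A$-valued operator inequality of positive elements; statements phrased only as norm inequalities on a dense subspace would not suffice. Once the set-up respects this discipline, the lemma follows exactly as in the scalar case.
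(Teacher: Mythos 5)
Your proposal is correct, and the second route you sketch (partial Fourier transform, integration by parts in the normal variable, $\mathcal A$-valued Cauchy--Schwarz with weight $\langle\xi'\rangle$, then multiply by $\langle\xi'\rangle^{2k-1}$ and integrate) is in substance the proof the paper gives in the appendix: the paper uses the full Fourier transform and the identity $\widehat{R\sigma}(\xi)=\int\hat\sigma(\xi,\zeta)\,d\zeta$, then applies the same $\mathcal A$-valued inequality $a^\ast b+b^\ast a\leq a^\ast a+b^\ast b$ with a weight-splitting, and finishes with the scalar estimate $\int(1+|\xi|+|\zeta|)^{-2s}\,d\zeta = C(1+|\xi|)^{k-2s}$. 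These are two standard variants of the same Fourier-side computation, and both reduce to the flat model by a partition of unity, local trivialization of $\mathcal S\otimes\mathcal V$, and boundedness/equivalence of Sobolev pairings under coordinate changes, exactly as you describe and as the paper's unproved corollary tacitly invokes.

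Your first route — realizing $\vh^k(\mathbb R^n_+,\mathcal A)$ as the exterior tensor product $\vh^k(\mathbb R^n_+)\otimes\mathcal A$ and amplifying the classical trace operator $T$ to the adjointable operator $T\otimes 1_{\mathcal A}$, so that $\|T\|\le c$ gives the positive-element inequality $\lcn (T\otimes 1)\xi,(T\otimes 1)\xi\rcn\le c^2\lcn\xi,\xi\rcn$ — is a genuinely cleaner alternative not used in the paper. It is valid precisely because $T\otimes 1$ is adjointable (with adjoint $T^\ast\otimes 1$) on the exterior tensor product, so $T^\ast T\otimes 1$ is a positive adjointable operator dominated by $\|T\|^2$. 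The point you raise at the end is the right one to be careful about: the amplification only applies to honest bounded operators between Hilbert spaces, so each scalar input (multiplication by cut-offs, change-of-coordinates on Sobolev scales, the trace theorem itself) must be packaged that way before amplifying. With that discipline respected, this route buys you the lemma without redoing any Fourier estimate, at the cost of having to justify the identification of $\mathcal A$-valued Sobolev modules as exterior tensor products and the compatibility of the pairings with a local direct-summand reduction from $\mathcal S\otimes\mathcal V$ to $\mathcal A^m$. The paper instead re-runs the computation directly, which is self-contained but longer.
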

\begin{proof}
See  Appendix $\ref{appendix}$. 
\end{proof}

The following lemma is a generalization of $\cite[Lemma ~8.6]{BBKW93}$. In particular, our argument follows closely the proof of $\cite[Lemma ~8.6]{BBKW93}$.

\begin{lemma}\label{lemma:ineq}
For $R > 0 $ sufficiently large and $T > 0$ sufficiently small, we have 
\begin{align}
& R\int_{u=0}^{T} \int_{\mathbb S_u} e^{R(T-u)^2} \langle \sigma(u, y), \sigma(u, y) \rangle dydu \notag \\
&  \leq C \left(\int_{u=0}^T \int_{\mathbb S_u} e^{R (T-u)^2 } \langle D\sigma, D\sigma\rangle  dy du + RT  e^{RT^2}\int_{\mathbb S_0} \left\langle \sigma, \, \sigma\right\rangle dy  \right)\label{eq:double}
\end{align}
for all $\sigma\in \Gamma^\infty(X, \mathcal S\otimes \mathcal V)$, where $\mathbb S_u$ is the sphere centered at $p_0$ with radius $r_0 + u$.
\end{lemma}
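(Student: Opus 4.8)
The plan is to prove this weighted $L^2$ estimate by the classical Carleman-type argument from \cite[Lemma 8.6]{BBKW93}, adapted to the $C^\ast$-algebra coefficients. First I would work on the cylinder $[0,T]\times N$ obtained from the spherical-coordinate description near $p_0$, where the Dirac operator takes the form $D = c(u)(\partial_u + A_u)$ with $A_u$ a (family of) first-order self-adjoint operator(s) along the spheres $\mathbb S_u$, and $c(u)$ a unitary Clifford multiplication. The key device is the exponential weight $\phi(u) = e^{R(T-u)^2}$, which satisfies $\phi'(u) = -2R(T-u)\phi(u)$, so that differentiating $\int \phi(u)\langle\sigma,\sigma\rangle$ in $u$ produces a term of size $R(T-u)$ times the pointwise norm — this is the mechanism that lets the large parameter $R$ dominate lower-order contributions. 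Concretely, I would set $\tau = \phi^{1/2}\sigma$, compute $\|c(u)(\partial_u+A_u)\sigma\|^2$ weighted by $\phi$, integrate by parts in $u$, and collect: the cross terms involving $A_u$ are controlled because $A_u$ is symmetric (so $\langle A_u\tau,\tau\rangle$ is real modulo commutator terms that are bounded, hence absorbed for $T$ small), the $\partial_u$ term integrates to a boundary contribution at $u=0$ (giving the $RTe^{RT^2}\int_{\mathbb S_0}\langle\sigma,\sigma\rangle$ term — the boundary term at $u=T$ has the favorable sign since $\phi'(T)=0$ and we discard or it vanishes), and the weight derivative produces exactly $R\int\phi(T-u)|\sigma|^2$, which for $R$ large dominates.

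The steps in order: (1) reduce to the cylinder and record $D = c(u)(\partial_u + A_u)$, noting $c(u)^\ast c(u)=1$; (2) for fixed $\sigma$, expand $\int_0^T\int_{\mathbb S_u}\phi\,\langle D\sigma, D\sigma\rangle = \int\phi\,\langle(\partial_u+A_u)\sigma,(\partial_u+A_u)\sigma\rangle$; (3) integrate by parts in $u$ on the cross term $2\,\mathrm{Re}\int\phi\,\langle\partial_u\sigma, A_u\sigma\rangle$, using self-adjointness of $A_u$ and moving one $\partial_u$ onto $\phi$ and onto $A_u$ — here $\partial_u A_u$ is a zeroth-order operator, bounded on the compact cylinder, so its contribution is $O(1)\cdot\int\phi|\sigma|^2$, absorbable into the left side once $R$ is large; (4) on the $\|\partial_u\sigma\|^2_\phi$ piece, integrate by parts to extract $-\int\phi'\,\langle\sigma,\partial_u\sigma\rangle$ type terms and the boundary values, and combine with the $\langle\sigma,\sigma\rangle$ lower-order terms coming from $\phi''$; (5) the decisive identity is that the total coefficient of the unweighted-by-$A$ quadratic term is $(\text{const})R(T-u)\phi$ after all integrations by parts, so choosing $R$ large and $T$ small makes every error term (all $O(1)$ or $O(T)$ relative to $R$) absorbable, leaving the stated inequality with the boundary term at $u=0$.

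Because all computations are with an $\mathcal A$-valued inner product, at each integration by parts I must be careful that the "real part" manipulations are replaced by the symmetrization $\xi + \xi^\ast$ in $\mathcal A$, and that inequalities like $a\geq b\geq 0 \Rightarrow \|a\|\geq\|b\|$ (used in Lemma \ref{lemma:lb}) are invoked only at the end when passing from operator inequalities in $\mathcal A$ to norm inequalities; the positivity of $\langle\sigma,\sigma\rangle$ in $\mathcal A$ is what makes the weighted quantities behave like genuine $L^2$ norms. The main obstacle I anticipate is bookkeeping the boundary terms correctly — ensuring the $u=T$ endpoint contributes with a sign we can drop (or vanishes, since $\phi'(T)=0$) while the $u=0$ endpoint produces precisely the $RTe^{RT^2}\int_{\mathbb S_0}\langle\sigma,\sigma\rangle$ term with the right power of $R$ — and verifying that the commutator/derivative terms $[\partial_u, A_u]$ and $\partial_u c(u)$, which are genuinely present on curved spheres, remain uniformly bounded on the small cylinder so that the large-$R$ absorption goes through. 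I expect Lemma \ref{lemma:res} (the trace estimate) to be invoked at the interface when this local cylinder estimate is later patched into the proof of Theorem \ref{thm:double}, rather than inside this lemma itself.
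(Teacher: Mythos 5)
Your high-level plan — substitute $\nu=e^{R(T-u)^2/2}\sigma$, write $D=c(u)(\partial_u+A_u)$ on the cylinder, expand the weighted square, integrate by parts in $u$, and absorb errors for $R$ large and $T$ small — matches the structure of the paper's proof. But two of the steps you describe would fail as written, and filling them is where the actual work of the paper's argument lives.

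First, you assert that $\partial_u A_u$ is ``a zeroth-order operator, bounded on the compact cylinder,'' and therefore contributes $O(1)\int\phi\langle\sigma,\sigma\rangle$. This is not correct: $A_u$ is a $u$-dependent family of first-order operators on the spheres, so $\partial_u A_u$ is again \emph{first-order} (the $u$-derivative hits the coefficients of $\partial_{y_j}$, not just the zeroth-order part). It is not bounded on $L^2(\mathbb S_u)$, and its contribution cannot be absorbed by a crude $O(1)$ estimate. The same is true of the commutator $[A_+,A_-]$ that arises once one accounts for the fact that, after the metric is normalized to be $u$-independent on the cylinder, $A_u$ is no longer self-adjoint and must be split into its symmetric and antisymmetric parts $A_\pm$. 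The paper handles these genuinely first-order error terms by invoking the $\mathcal A$-valued G\aa rding inequality (Lemma~\ref{lemma:garding}), which converts $\lcn\nu,\nu\rcn_1$ into $\lcn\nu,\nu\rcn_0 + \lcn A_+\nu,A_+\nu\rcn_0$, and then absorbs the latter into the quadratic term $\iint\langle(A_++R(T-u))\nu,(A_++R(T-u))\nu\rangle$. Without this ellipticity-based absorption, Step One of the argument does not close.

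Second, your treatment of the boundary at $u=T$ is too optimistic. After integration by parts the boundary contributions are not $\phi'$-weighted $L^2$ traces (for which $\phi'(T)=0$ would indeed help) but rather the $A_+$-weighted quantities
\[
\int_{\mathbb S_T}\langle\nu, A_+\nu\rangle\,dy \;-\;\int_{\mathbb S_0}\langle\nu, A_+\nu\rangle\,dy,
\]
which neither vanish at $u=T$ nor carry a sign that lets you discard them. The paper's Step Two rewrites this difference as $\int_0^T\partial_u\bigl(\int_{\mathbb S_u}\langle\sigma,A_+\sigma\rangle\,dy\bigr)du$ (using $\nu=\sigma$ at $u=T$) and again appeals to Lemma~\ref{lemma:bbd} and G\aa rding to bound it by $\iint\langle\sigma,\sigma\rangle + \iint\langle D\sigma,D\sigma\rangle$. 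Your plan has no mechanism for this boundary term, so the claimed inequality would come out with an uncontrolled extra piece. Your instincts about the $\mathcal A$-valued bookkeeping (replacing $\mathrm{Re}$ by symmetrization, using $a^\ast b+b^\ast a\leq\lambda a^\ast a+\lambda^{-1}b^\ast b$, and passing to norm inequalities only at the end) are exactly right and match the paper; it is the order-counting on $\partial_u A_u$ and $[A_+,A_-]$, and the handling of the $u=T$ boundary term, that need repair.
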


\begin{proof}
In order to simplify the computation, let us consider a few technical points. In the annulus $[0, T]\times \mathbb S_0$, the Dirac operator $D$ takes the form
\[  D = c(u) (\frac{\partial}{\partial u} + \bd_u) \]
where $c(u)$ is the Clifford multiplication of the radial vector. It is easy to see that we may consider the operator $\frac{\partial}{\partial u} + \bd_u$ instead of $D$. We may further deform the Riemannian metric of the manifold and the Hermitian metrics of the bundles on $[0, T] \times \mathbb S_0$ such that they do not depend on the radial direction $u$, but \textit{keep the operator $D$ fixed}. Then it  suffices to prove the lemma under this new metric. The only inconvenience is that in general $\bd_u$ is not a self-adjoint operator with respect to the new structures. This is taken care of by considering its self-adjoint part
\[ \bd_+ = (\bd_u)_+ = \frac{1}{2} (\bd_u + \bd_u^\ast). \]
Notice that $(\bd_u)_+$ is an elliptic differential operator for each $u$, as long as $T$ is sufficiently small. A priori, the choice of $T$ may depend on the neighborhood $B(p_0, r_0)$. However, since $X$ is compact, we can choose $T> 0$ to be independent of $B(p_0, r_0)$. 

Consider $ \nu = e^{R(T-u)^2/2} \sigma$. Then the inequality $\eqref{eq:double}$ becomes 

\begin{align}
&R\int_{u=0}^{T} \int_{\mathbb S_u}  \langle \nu(u, y), \nu(u, y) \rangle dy du\notag  \\
& \leq C \left(\iint \left\langle (D + R(T-u) )\nu,  (D + R(T-u) )\nu\right\rangle  dy du + RT \int_{\mathbb S_0} \left\langle \nu, \, \nu\right\rangle dy \right)
\end{align}
Decompose $\frac{\partial}{\partial u} + \bd + R(T-u)$ into its symmetric part $\bd_+ + R(T-u)$ and its anti-symmetric part $\frac{\partial}{\partial u} + \bd_-$ with 
\[  \bd_- = (\bd_u)_- = \frac{1}{2}(\bd_u - \bd_u^\ast). \]
Then 
\begin{align}
& \int_{u=0}^T \int_{\mathbb S_u} \left\langle (D + R(T-u) )\nu,  (D + R(T-u) )\nu\right\rangle  dy du \notag\\
& =\int_{u=0}^T \int_{\mathbb S_u} \left\langle \frac{\partial \nu}{\partial u}+ \bd \nu + R(T-u)\nu, \frac{\partial \nu}{\partial u}+ \bd \nu + R(T-u) \nu\right\rangle  dy du \notag \\
& = \iint \left\langle\frac{\partial \nu}{\partial u}+ \bd_- \nu, \frac{\partial \nu}{\partial u}+ \bd_- \nu \right\rangle  dy du \notag\\
& \quad + \iint \langle (\bd_+ + R(T-u)) \nu, (\bd_+ + R(T-u)) \nu\rangle dydu \notag\\
& \quad +  \iint \left\langle \frac{\partial \nu}{\partial u} + \bd_-\nu, \, \bd_+\nu + R(T-u) \nu\right\rangle dy du \notag\\
&  \quad + \iint \left\langle \bd_+\nu + R(T-u) \nu, \, \frac{\partial \nu}{\partial u} + \bd_-\nu\right\rangle dy du \label{eq:bbd}
\end{align}

Let us consider the last two terms of $\eqref{eq:bbd}$. By integration by parts, we have 
\begin{align}
& \iint \left\langle \frac{\partial \nu}{\partial u} + \bd_-\nu, \, \bd_+\nu + R(T-u) \nu\right\rangle dy du \notag\\
& + \iint \left\langle \bd_+\nu + R(T-u) \nu, \, \frac{\partial \nu}{\partial u} + \bd_-\nu\right\rangle dy du \notag \displaybreak[0]\\
& = \iint \left\langle \frac{\partial \nu}{\partial u}, \, \bd_+\nu  + R(T-u)  \nu\right\rangle dy du  \notag\\ 
& \quad + \iint \left\langle  \bd_+\nu + R(T-u)  \nu, \, \frac{\partial \nu}{\partial u} \right\rangle dy du \notag\\
&  \quad +  \iint \left\langle \bd_-\nu, \, \bd_+\nu \right\rangle dy du  + \iint \left\langle \bd_+\nu, \, \bd_-\nu \right\rangle dy du  \displaybreak[0]\notag\\
& =  \int_{\mathbb S_T} \left\langle \nu, \, \bd_+\nu\right\rangle dy- \int_{\mathbb S_0} \left\langle \nu, \, \bd_+\nu + RT \nu\right\rangle dy \notag \\
& \quad -\iint \left\langle \nu, \, \left(\frac{\partial}{\partial u}(\bd_+\nu + R(T-u))\right)  \nu\right\rangle dy du \notag \\
& \quad + \iint \left\langle  \nu, \,  (\bd_+ + R(T-u)) \frac{\partial \nu}{\partial u} \right\rangle dy du + \iint \left\langle \nu, \, [\bd_+, \bd_-]\nu \right\rangle dy du \displaybreak[1]\notag\\
& =  \int_{\mathbb S_T} \left\langle \nu, \, \bd_+\nu\right\rangle dy -  \int_{\mathbb S_0} \left\langle \nu, \, \bd_+\nu + RT  \nu\right\rangle dy  \notag\\
& \quad +  \iint \left\langle \nu, \, -\frac{\partial \bd_+}{\partial u}\nu + R\nu \right\rangle dy du  + \iint \left\langle \nu, \, [\bd_+, \bd_-]\nu \right\rangle dy du \displaybreak[0]\notag\\
& =  \int_{\mathbb S_T} \left\langle \nu, \, \bd_+\nu\right\rangle dy -  \int_{\mathbb S_0} \left\langle \nu, \, \bd_+\nu \right\rangle dy  -  RT\int_{\mathbb S_0} \left\langle \nu, \,\nu \right\rangle dy\notag\\
& \quad + R\iint \langle \nu, \nu \rangle dy\ du   + \iint \left\langle \nu, \, -\frac{\partial \bd_+}{\partial u}\nu + [\bd_+, \bd_-] \nu \right\rangle dy du. \label{eq:1}
\end{align}

Now we prove the lemma in three steps. 

\noindent\textbf{Step One:} We shall prove that  
\begin{align}
& \pm \int_0^T\int_{\mathbb S_u} \left\langle \nu, \, -\frac{\partial \bd_+}{\partial u}\nu + [\bd_+, \bd_-] \nu \right\rangle dy du \notag\\
&\leq k \left( R \iint \langle \nu, \nu \rangle dy du \right. \notag\\
      &\hspace{2cm}    + \left.\iint \langle (\bd_+ + R(T-u))\nu, (\bd_+ + R(T-u))\nu\rangle  dy du \right) \label{eq:2}
\end{align}
for some constant $ 0 < k < 1$.

Notice that $a^\ast b + b^\ast a \leq  \lambda a^\ast a + \frac{1}{\lambda } b^\ast b$
for all $a, b\in \mathcal A$ and for all $\lambda >0$.  
Moreover, since the operators $(\bd_u)_+$ are first order elliptic operators,  we have the following $\mathcal A$-valued G\aa rding's inequality (cf. Lemma $\ref{lemma:garding}$):
\[   \lcn f, f\rcn_1 \leq c \lcn f, f \rcn_0  + c \lcn (\bd_u)_+ f,  (\bd_u)_+ f\rcn_0
\] 
for all $f\in \Gamma^\infty(\mathbb S_u, \mathcal S\otimes \mathcal V)$. Therefore, we have
\begin{align*}
& 2 \iint \left\langle \nu, \, -\frac{\partial \bd_+}{\partial u}\nu + [\bd_+, \bd_-] \nu \right\rangle dy du \\
&\leq  \lambda \iint \langle \nu, \nu \rangle dy du \\
& \quad + \frac{1}{\lambda}\iint \left\langle  (-\frac{\partial \bd_+}{\partial u} + [\bd_+, \bd_-]) \nu,  (-\frac{\partial \bd_+}{\partial u} + [\bd_+, \bd_-]) \nu \right\rangle  dy du \displaybreak[1]\\
&\leq  \lambda \iint \langle \nu, \nu \rangle dy du + \frac{c_1}{\lambda}\int \lcn \nu, \nu \rcn_1 du\\
&\leq  \lambda \iint \langle \nu, \nu \rangle dy du + \frac{c_1 c}{\lambda}  \int \lcn\nu, \nu \rcn_0 + \lcn \bd_+ \nu, \bd_+ \nu \rcn_0 \, du \\
&= \lambda \iint \langle \nu, \nu \rangle dy du + \frac{c_1 c}{\lambda} \left( \iint \left\langle \nu, \nu \right\rangle  dy du + \iint \left\langle \bd_+\nu, \bd_+\nu \right\rangle  dy du\right) \displaybreak[1]\\
& \leq  \left( \lambda  + \frac{c_1 c}{\lambda}\right) \iint \langle \nu, \nu \rangle dy du + \frac{2 c_1 c}{\lambda} \iint \left\langle  R(T-u)\nu,   R(T-u)\nu \right\rangle  dy du \\
&\quad + \frac{2 c_1 c}{\lambda} \iint \left\langle (\bd_+ + R(T-u))\nu, (\bd_+ + R(T-u))\nu \right\rangle  dy du \\
& \leq \left( \lambda  + \frac{c_1 c}{\lambda}(2R^2T^2 +1) \right) \iint \langle \nu, \nu \rangle dy du \\
& \quad + \frac{2 c_1 c}{\lambda} \iint \left\langle (\bd_+ + R(T-u))\nu, (\bd_+ + R(T-u))\nu \right\rangle  dy du.
\end{align*}
Choose $\lambda = R$, then 
\begin{align*}
& \iint \left\langle \nu, \, -\frac{\partial \bd_+}{\partial u}\nu + [\bd_+, \bd_-] \nu \right\rangle dy du \\
& \leq 
R \left( \frac{1}{2}  + c_1 c T^2 + \frac{c_1 c}{2R^2} \right) \iint \langle \nu, \nu \rangle dy du \\
& \quad + \frac{c_1 c}{R} \iint \left\langle (\bd_+ + R(T-u))\nu, (\bd_+ + R(T-u))\nu \right\rangle  dy du.
\end{align*}
This proves $\eqref{eq:2}$ for $R$ sufficiently large and $T$ sufficiently small. Note that the constants $c_1$ and $c$ depend on the local spherical coordinate chart. Since the manifold $X$ is compact, we see that $c_1$ and $c$ are uniformly bounded on $X$. Therefore the choice of the constant $T$ can be made independent of the local neighborhood $B(p_0, r_0)$.

\noindent \textbf{Step Two.} Now let us consider the term 
\[ \int_{\mathbb S_T} \left\langle \nu, \, \bd_+\nu\right\rangle dy - \int_{\mathbb S_0} \left\langle \nu, \, \bd_+\nu\right\rangle dy. \]
Recall that  $\nu = e^{R(T-u)^2/2} \sigma$. In particular, $\nu = \sigma$ on $\mathbb S_T$. It follows that 
\begin{align*}
&\int_{\mathbb S_T} \left\langle \nu, \, \bd_+\nu\right\rangle dy  - \int_{\mathbb S_0} \left\langle \nu, \, \bd_+\nu\right\rangle dy\\
& = \int_{\mathbb S_T} \left\langle \sigma, \, \bd_+\sigma\right\rangle dy - \int_{\mathbb S_0} \left\langle \sigma, \, \bd_+\sigma\right\rangle dy\\
& = \int_0^T \int_{\mathbb S_u} \frac{\partial }{\partial u}\left\langle \sigma, \, \bd_+\sigma\right\rangle dy du \displaybreak[1]\\
&  = \int_0^T \int_{\mathbb S_u} \left\langle \frac{\partial \sigma}{\partial u}, \, \bd_+\sigma\right\rangle dy du + \int_0^T \int_{\mathbb S_u} \left\langle \sigma, \, \frac{\partial \bd_+}{\partial u}\sigma\right\rangle dy du \\
& \quad +\int_0^T \int_{\mathbb S_u} \left\langle \sigma, \, \bd_+ \frac{\partial \sigma}{\partial u}\right\rangle dy du.  
\end{align*}
Since $\bd_+$ is self-adjoint, we have 
\[ \int_0^T \int_{\mathbb S_u} \left\langle \sigma, \, \bd_+ \frac{\partial \sigma}{\partial u}\right\rangle dy du = \int_0^T \int_{\mathbb S_u} \left\langle \bd_+ \sigma, \,  \frac{\partial \sigma}{\partial u}\right\rangle dy du. \]
By Lemma $\ref{lemma:bbd}$ and Lemma $\ref{lemma:garding}$, we see that 
\begin{align*}
& \int_0^T \int_{\mathbb S_u} \left\langle \frac{\partial \sigma}{\partial u}, \, \bd_+\sigma\right\rangle dy du + \int_0^T \int_{\mathbb S_u} \left\langle \bd_+ \sigma, \,  \frac{\partial \sigma}{\partial u}\right\rangle dy du \\
&\leq   \int_0^T \int_{\mathbb S_u}  \left\langle \frac{\partial \sigma}{\partial u}, \, \frac{\partial \sigma}{\partial u}\right\rangle dy du + \int_0^T \int_{\mathbb S_u} \left\langle \bd_+ \sigma, \,  \bd_+\sigma\right\rangle dy du \\
& \leq K_0\int_0^T \int_{\mathbb S_u}  \langle \sigma, \sigma \rangle dy du + K_0\int_0^T \int_{\mathbb S_u}  \langle D\sigma, D\sigma \rangle dydu.
\end{align*}
Similarly, we have 
\begin{align*}
& \int_0^T \int_{\mathbb S_u} \left\langle \sigma, \, \frac{\partial \bd_+}{\partial u}\sigma\right\rangle dy du \\
&\leq K_1\int_0^T \int_{\mathbb S_u}  \langle \sigma, \sigma \rangle dy du + K_1\int_0^T \int_{\mathbb S_u}  \langle D\sigma, D\sigma \rangle dydu. 
\end{align*}
It follows that 
\begin{align*}
&\int_{\mathbb S_T} \left\langle \nu, \, \bd_+\nu\right\rangle dy - \int_{\mathbb S_0} \left\langle \nu, \, \bd_+\nu\right\rangle dy\\
&\leq K\int_0^T \int_{\mathbb S_u}  \langle \sigma, \sigma \rangle dy du + K\int_0^T \int_{\mathbb S_u}  \langle D\sigma, D\sigma \rangle dydu. 
\end{align*}
In fact, the same argument shows that 
\begin{align}
&\pm \left(\int_{\mathbb S_T} \left\langle \nu, \, \bd_+\nu\right\rangle dy - \int_{\mathbb S_0} \left\langle \nu, \, \bd_+\nu\right\rangle dy\right) \notag \\
&\leq K\int_0^T \int_{\mathbb S_u}  \langle \sigma, \sigma \rangle dy du + K\int_0^T \int_{\mathbb S_u}  \langle D\sigma, D\sigma \rangle dydu.  \label{eq:3}
\end{align}

\noindent \textbf{Step Three.} Combining $\eqref{eq:bbd}$, $\eqref{eq:1}$ and $\eqref{eq:2}$ together, we have  
\begin{align}
& \int_{u=0}^T \int_{\mathbb S_u} \left\langle (D + R(T-u) )\nu,  (D + R(T-u) )\nu\right\rangle  dy du \notag \displaybreak[1]\\
& \geq  \iint \left\langle\frac{\partial \nu}{\partial u}+ \bd_- \nu, \frac{\partial \nu}{\partial u}+ \bd_- \nu \right\rangle  dy du \notag\\
& \quad + (1-k) \iint \langle (\bd_+ + R(T-u)) \nu, (\bd_+ + R(T-u)) \nu\rangle dydu \notag\\
& \quad  + (1-k) R\iint \langle \nu, \nu \rangle dy\ du - RT\int_{\mathbb S_0} \left\langle \nu, \,  \nu\right\rangle dy  \notag\\
& \quad + \int_{\mathbb S_T} \left\langle \nu, \, \bd_+\nu\right\rangle dy -  \int_{\mathbb S_0} \left\langle \nu, \, \bd_+\nu \right\rangle dy. 
\end{align}
It follows that 
\begin{align*}
& (1-k) R\int_0^T\int_{\mathbb S_u} \langle \nu, \nu \rangle dydu \\
&\leq \int_{u=0}^T \int_{\mathbb S_u} \left\langle (D + R(T-u) )\nu,  (D + R(T-u) )\nu\right\rangle  dy du \\
& \quad  +  RT \int_{\mathbb S_0} \left\langle \nu, \,   \nu\right\rangle dy + \int_{\mathbb S_0} \left\langle \nu, \, \bd_+\nu\right\rangle dy - \int_{\mathbb S_T} \left\langle \nu, \, \bd_+\nu\right\rangle dy.
\end{align*}
Recall that $\nu = e^{R(T-u)^2/2} \sigma$. By applying $\eqref{eq:3}$ to the above inequality, we have  
\begin{align*}
& (1-k) R\int_0^T\int_{\mathbb S_u} e^{R(T-u)^2}\langle \sigma, \sigma \rangle dydu \\
&\leq  \int_{u=0}^T \int_{\mathbb S_u} e^{R(T-u)^2}\left\langle D\sigma, D\sigma\right\rangle  dy du +  RT e^{RT^2}\int_{\mathbb S_0} \left\langle \sigma, \,   \sigma\right\rangle dy \\
& \quad  + K\int_0^T \int_{\mathbb S_u}  \langle \sigma, \sigma \rangle dy du + K\int_0^T \int_{\mathbb S_u}  \langle D\sigma, D\sigma \rangle dydu. 
\end{align*}
It follows immediately that 
\begin{align*}
& [(1-k) R - K]\int_0^T\int_{\mathbb S_u} e^{R(T-u)^2}\langle \sigma, \sigma \rangle dydu \\
&\leq (1+ K) \int_{u=0}^T \int_{\mathbb S_u} e^{R(T-u)^2}\left\langle D\sigma, D\sigma\right\rangle  dy du +  RT e^{RT^2}\int_{\mathbb S_0} \left\langle \sigma, \,   \sigma\right\rangle dy. 
\end{align*}
The proof is finished by choosing $R$ sufficiently large.
\end{proof}

Now we use the above lemmas to prove Claim $\ref{claim}$.  

\begin{proof}[\textbf{Proof of Claim \ref{claim}}]
Recall that $T$ in Lemma $\ref{lemma:ineq}$ can be chosen independent of the local small neighborhoods. Now since the boundary of $\Omega$ is compact, then for all sufficiently small $\lambda$, we have   
\[  \Omega^\lambda \subset \Omega \cup \bigcup_{i = 0}^{N} B\left(p_i; r_i + T\right)  \]
for some $N\in \mathbb N$.  Therefore, it suffices to show that we have constants $C_1$ and $ C_2$ such that 
\[  \|\sigma|_{\Omega'}\| \leq  C_1 \|\sigma|_{\Omega}\| + C_2 \|D\sigma\|   \]
for all $\sigma\in \Gamma^\infty(X; \mathcal S\otimes \mathcal V)$, where $\Omega' =  \Omega\cup B(p_0; r_0+ T)$. 

By Lemma $\ref{lemma:ineq}$, we have 
\begin{align*}
& \int_{\Omega'} \langle \sigma, \sigma \rangle dg \leq \int_\Omega \langle \sigma, \sigma \rangle dg + \int_{B(p_0;\, r_0+ T)} \langle \sigma, \sigma \rangle dg  \\ 
&  = \int_\Omega \langle \sigma, \sigma \rangle dg + \int_{B(p_0; r_0)} \langle \sigma, \sigma \rangle dg + \int_{u=0}^{T} \int_{\mathbb S_u} \langle \sigma, \sigma \rangle dy du \\
& \leq 2\int_\Omega \langle \sigma, \sigma \rangle dg + \int_{u=0}^T \int_{\mathbb S_u} e^{R(T-u)^2}\langle \sigma, \sigma \rangle dy du \displaybreak[2]\\
& \leq 2\int_\Omega \langle \sigma, \sigma \rangle dg  \\
& \quad + \frac{C}{R} \left(\int_{u=0}^T \int_{\mathbb S_u} e^{R (T-u)^2 } \langle D\sigma, D\sigma\rangle  dy du + RT e^{RT^2}\int_{\mathbb S_0} \left\langle \sigma, \, \sigma\right\rangle dy \right)\\
& \leq 2\int_\Omega \langle \sigma, \sigma \rangle dg + \frac{Ce^{RT^2}}{R} \left(\int_{u=0}^T \int_{\mathbb S_u} \langle D\sigma, D\sigma\rangle  dy du + RT\int_{\mathbb S_0} \left\langle \sigma, \, \sigma\right\rangle dy \right)
\end{align*}
for $R$ sufficiently large. Here $dg$ stands for the volume form on $X$. 
Now by Lemma $\ref{lemma:res}$, it follows that  
\[ \|\sigma|_{\Omega'}\|^2 \leq K_1 \|\sigma|_\Omega\|^2 + K_2 \|D\sigma\|^2 \]
for some constants $K_1$ and $K_2$. This finishes the proof.

\end{proof}

\section{Diffeomorphisms and positive scalar curvature}\label{sec:diffeo}

In this section, we apply our relative higher index theorem to study $\mathcal R^+(M)$ the space of all metrics of positive scalar curvature on a manifold $M$. All the results and their proofs in this section work for both the real and the complex cases. For simplicity, we only state and prove the results for the complex case.   

Throughout this section, we assume that $M$ is an odd dimensional\footnote{In the real case, assume that $\dim M = -1 \pmod 8$.} closed spin manifold and $M_\Gamma$ an $\Gamma$-cover of $M$, where $\Gamma$ is a discrete group. Assume $M$ carries positive scalar curvature, i.e. $\mathcal R^+(M)\neq \emptyset$. Choose $g_0, g_1\in \mathcal R^+(M)$. We define a smooth path of Riemannian metrics $g_t$  on $M$ such that 
\[ 
g_t = \begin{cases}
g_0 \quad \textup{for $t\leq 0$, }\\
g_1 \quad \textup{for $t\geq 1$,} \\
\textup{any smooth homotopy from $g_0$ to $g_1$ for $0\leq t \leq 1$.}
\end{cases}\] 
Then $X = M\times \mathbb R$ endowed with the metric $ h = g_t + (dt)^2$ becomes a complete Riemannian manifold with positive scalar curvature towards infinity. 

Denote $X_\Gamma = M_\Gamma \times \mathbb R$. Then $X_\Gamma$ is naturally a $\Gamma$-cover of $X$ with $\Gamma$ acting on $\mathbb R$ trivially. We define a flat $C_r^\ast(\Gamma)$-bundle $\mathcal V$ on $X$ by
\[  \mathcal V = X_\Gamma\times_\Gamma C_r^\ast(\Gamma). \]
Let $\mathcal S = \mathcal S^+ \oplus \mathcal S^-$ the spinor bundle over $X$. Then, with the flat connection on $\mathcal V$, we can define the Dirac operator 
\begin{equation} \label{eq:md}
 D_{\mathcal V} : \Gamma^\infty (X, \mathcal S\otimes \mathcal V) \to \Gamma^\infty (X, \mathcal S\otimes \mathcal V). 
\end{equation}
By the discussion in Section $\ref{sec:dirac}$, we have a higher index class $\ind(D_{\mathcal V} ) \in K_0(C_r^\ast(\Gamma)).$ We also write $\ind(D_{\mathcal V} ) = \ind_\Gamma(g_0, g_1)$ if we want to specify the metrics. 

\begin{question}
It remains an open question whether $\ind_\Gamma(g_0, g_1)$ lies in the image of the Baum-Connes assembly map $ \mu: K_0^{\Gamma}(\underline{E}\Gamma) \to K_0(C^\ast_r(\Gamma)).$
\end{question}
We refer the reader to  \cite{PBAC88} and \cite{BCH94} for a detailed description of the Baum-Connes assembly map (\cite{PBMK04} for its real analogue).

Now let $g_0, g_1, g_2 \in \mathcal R^+(M)$ be Riemannian metrics of positive scalar curvature on $M$. The following propositions generalize the corresponding classical results of Gromov and Lawson \cite[Theorem 4.41 \& Theorem 4.48]{MGBL83}. 
\begin{proposition}\label{prop:rd}
\[ \ind_\Gamma(g_0, g_1) + \ind_\Gamma(g_1, g_2) = \ind_\Gamma(g_0, g_2).\]
\end{proposition}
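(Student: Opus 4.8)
The plan is to realize all three index classes as higher indices of Dirac operators on manifolds of the form $M\times\mathbb{R}$, and then to apply the relative higher index theorem (Theorem \ref{thm:rit}) after a suitable cutting-and-pasting. Concretely, pick smooth paths of metrics $g_t^{(01)}$, $g_t^{(12)}$, $g_t^{(02)}$ interpolating $g_0\rightsquigarrow g_1$, $g_1\rightsquigarrow g_2$, $g_0\rightsquigarrow g_2$ respectively, each locally constant outside $[0,1]$, giving complete metrics $h_{01},h_{12},h_{02}$ on $M\times\mathbb{R}$ with positive scalar curvature towards infinity; write $D_{01},D_{12},D_{02}$ for the associated Dirac operators on the $\Gamma$-cover $M_\Gamma\times\mathbb{R}$ twisted by $\mathcal V$, so that $\operatorname{ind}(D_{01})=\operatorname{ind}_\Gamma(g_0,g_1)$, etc. The key point is that the index class is unchanged if we stretch the $\mathbb{R}$-direction or insert long product regions $(M,g_i)\times[\text{interval}]$ where the scalar curvature is positive, since such modifications neither affect positivity of scalar curvature towards infinity nor the $K$-theory class (this is exactly the kind of deformation used in Section \ref{sec:rel}).

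First I would take $X_0$ to be $M\times\mathbb{R}$ carrying a metric that looks like $g_0+dt^2$ for $t\le -1$, interpolates $g_0\rightsquigarrow g_1$ on a compact middle region, equals $g_1+dt^2$ on a long product interval, then interpolates $g_1\rightsquigarrow g_2$, and equals $g_2+dt^2$ for $t$ large; its Dirac operator has index $\operatorname{ind}_\Gamma(g_0,g_1)+\operatorname{ind}_\Gamma(g_1,g_2)$ because on the long $g_1$-product region the operator is invertible, so by the additivity packaged into Theorem \ref{thm:rit} (cutting along a hypersurface $N=\{pt\}\times M$ inside that region) the index splits as the sum of the two pieces. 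Next I would take $X_1$ to be $M\times\mathbb{R}$ with the metric interpolating $g_0\rightsquigarrow g_2$ directly, so its index is $\operatorname{ind}_\Gamma(g_0,g_2)$. Then $\Omega_i$ can be chosen to be the two ends $M\times(-\infty,-1]$ (with metric $g_0+dt^2$) and $M\times[c,\infty)$ (with metric $g_2+dt^2$), which are isometric in $X_0$ and $X_1$ via the identity, and these isometries lift to the spinor bundles and to the $\Gamma$-covers. Cutting both manifolds along hypersurfaces in the matching ends and regluing, Theorem \ref{thm:rit} yields $\operatorname{ind}(D_{X_2})=\operatorname{ind}(D_{X_0})-\operatorname{ind}(D_{X_1})$; but the glued manifold $X_2$ is itself a double of the "cap" regions, or more directly is built so that its Dirac operator is that of an invertible double, hence $\operatorname{ind}(D_{X_2})=0$ by Theorem \ref{thm:double}. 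Rearranging gives the claimed identity.

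An alternative, perhaps cleaner, route I would keep in reserve: glue $X_0$ (carrying $g_0\rightsquigarrow g_1\rightsquigarrow g_2$) and $-X_1$ (carrying the reverse of $g_0\rightsquigarrow g_2$) directly along their common ends, obtaining a manifold on which the Dirac operator is invertible near infinity, and observe that the resulting closed-at-infinity object has vanishing index by the invertible double argument; then Theorem \ref{thm:rit} immediately reads $0=\bigl(\operatorname{ind}_\Gamma(g_0,g_1)+\operatorname{ind}_\Gamma(g_1,g_2)\bigr)-\operatorname{ind}_\Gamma(g_0,g_2)$.

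The main obstacle I anticipate is bookkeeping rather than conceptual: one must verify carefully that the three interpolating metrics can be arranged so that the ends to be identified are genuinely isometric (not merely homotopic) with spin-structure-compatible bundle isometries lifting to the $\Gamma$-covers, and that the compact "defect" regions where scalar curvature may fail to be positive are confined to the interior so that the hypotheses of Theorem \ref{thm:rit} apply verbatim. A secondary point to check is the independence of $\operatorname{ind}_\Gamma(g_0,g_1)$ from the choice of homotopy $g_t$ and from stretching parameters, which follows from homotopy invariance of the index but should be stated explicitly before the gluing argument is run.
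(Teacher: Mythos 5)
The paper's own proof is the one-liner ``follows immediately from Theorem~\ref{thm:rit},'' so your attempt to unpack it is in the right spirit, and the geometric picture you describe is the correct one. However, the argument as written is circular. Your opening claim is that the manifold carrying $g_0\rightsquigarrow g_1\rightsquigarrow g_2$ has index $\ind_\Gamma(g_0,g_1)+\ind_\Gamma(g_1,g_2)$; but by the very well-definedness of $\ind_\Gamma(\,\cdot\,,\,\cdot\,)$ (independence of the interpolating path, which you rightly flag as a point to check), that manifold's index is $\ind_\Gamma(g_0,g_2)$, so this claim \emph{is} Proposition~\ref{prop:rd}. The justification you offer for it -- ``the index splits as the sum of the two pieces by the additivity packaged into Theorem~\ref{thm:rit}'' -- is not something Theorem~\ref{thm:rit} says: that theorem is a cut-and-paste comparison of \emph{two} manifolds producing a \emph{difference} of indices, not an additivity statement obtained by cutting a single manifold along an interior hypersurface. (In particular, the long $g_1$-plateau is not a union of components of $X_0\setminus K_0$ in the sense required by the theorem's hypotheses, so one cannot take $\Omega$ inside it.) Everything after this claim is then redundant, since the proposition is already in hand.

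There is also a concrete error in the second half: after cutting off \emph{both} ends and regluing $Y_0$ with $-Y_1$, the resulting $X_2$ is diffeomorphic to $M\times S^1$; it is \emph{not} a double, because $Y_0$ (carrying $g_0\!\to\!g_1\!\to\!g_2$) and $Y_1$ (carrying $g_0\!\to\!g_2$) are different manifolds-with-boundary. Theorem~\ref{thm:double} therefore does not apply, and $\ind(D_{X_2})=0$ would need a separate argument (e.g.\ homotoping the metric on the closed manifold $M\times S^1$ to a product metric, whose Dirac operator has vanishing $\Gamma$-index). The fix is to apply Theorem~\ref{thm:rit} \emph{once} with $X_0, X_1$ chosen so that the glued $X_2$ has independently computable index: for instance, take $X_0$ with metric interpolating $g_0\rightsquigarrow g_1$ and $X_1$ with metric interpolating $g_2\rightsquigarrow g_1$, identify the two $g_1$-ends, and observe that $X_2$ is then $M\times\mathbb{R}$ with a $g_0\rightsquigarrow g_2$ metric, so $\ind(D_2)=\ind_\Gamma(g_0,g_2)$; Theorem~\ref{thm:rit} now reads $\ind_\Gamma(g_0,g_2)=\ind_\Gamma(g_0,g_1)-\ind_\Gamma(g_2,g_1)$, and the standard orientation-reversal antisymmetry $\ind_\Gamma(g_2,g_1)=-\ind_\Gamma(g_1,g_2)$ finishes the proof. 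Alternatively, take $X_0=g_0\rightsquigarrow g_1\rightsquigarrow g_2$ and $X_1=g_0\rightsquigarrow g_1$, matching only the $g_0$-ends; then $X_2$ carries a $g_1\rightsquigarrow g_2$ metric and the identity follows without needing antisymmetry.
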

\begin{proof}
 The statement follows immediately from the relative higher index theorem (Theorem $\ref{thm:rit}$).  
\end{proof}

 Denote by $\textup{Diff}^\infty(M)$ the group of diffeomorphisms on $M$. For a fixed metric $g\in \mathcal R^+(M)$, set 
\[ \ind_\Gamma(\diffeo) = \ind_\Gamma(g, (\diffeo^{-1})^\ast g) \]
for $\diffeo\in \textup{Diff}^\infty(M)$. 

Recall that in the case when $\Gamma = \pi_1(M)$ the fundamental group of $M$, there is a natural homomorphism 
\[ \varphi: \textup{MCG}(M) = \textup{Diff}^\infty(M)/\textup{Diff}_0^\infty(M) \to \textup{Out}(\Gamma) \]  
where $\textup{Out}(\Gamma)$ is the group of outer automorphisms of $\Gamma$ and $\textup{Diff}^\infty_0(M)$ is  the connected component of the identity in $\textup{Diff}^\infty(M)$. In particular, each $\diffeo \in \textup{Diff}^\infty(M)$ induces an automorphism $\diffeo_\ast:  K_0(C_r^\ast(\Gamma)) \to  K_0(C_r^\ast(\Gamma))$. We denote by $K_0(C_r^\ast(\Gamma))\rtimes_\varphi \textup{MCG}(M)$ the semi-direct product of $K_0(C_r^\ast(\Gamma))$ and $\textup{MCG}(M)$, where $\textup{MCG}(M)$ acts on $K_0(C_r^\ast(\Gamma))$ through $\varphi$.
\begin{proposition}
For $\Gamma = \pi_1(M)$, we have a group homomorphism 
\[ \ind_\Gamma: \textup{Diff}^\infty(M)/\textup{Diff}^\infty_0(M) \to K_0(C_r^\ast(\Gamma))\rtimes_\varphi \textup{MCG}(M). \]

\end{proposition}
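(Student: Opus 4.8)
The plan is to define the map already on $\textup{Diff}^\infty(M)$ by $\diffeo\mapsto(\ind_\Gamma(\diffeo),[\diffeo])\in K_0(C_r^\ast(\Gamma))\rtimes_\varphi\textup{MCG}(M)$, then to check that it is a group homomorphism and that $\textup{Diff}_0^\infty(M)$ maps to the identity, so that it descends to $\textup{MCG}(M)$. Unwinding the multiplication in the semi-direct product, the homomorphism property is exactly the cocycle identity
\[ \ind_\Gamma(\diffeo_1\diffeo_2)=\ind_\Gamma(\diffeo_1)+(\diffeo_1)_\ast\,\ind_\Gamma(\diffeo_2)\qquad\text{in }K_0(C_r^\ast(\Gamma)), \]
where $(\diffeo_1)_\ast$ is the automorphism induced by $\varphi([\diffeo_1])\in\textup{Out}(\Gamma)$. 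Two preliminary facts feed into this. First, the secondary index $\ind_\Gamma(g_0,g_1)$ depends only on the path components of $g_0$ and $g_1$ in $\mathcal R^+(M)$; in particular it vanishes when $g_0$ and $g_1$ are joined by a path in $\mathcal R^+(M)$, and it is independent of the chosen interpolating path $g_t$. This is the standard consequence of the Lichnerowicz identity $D^2=\nabla^\ast\nabla+\kappa/4$: reparametrizing a given path over a long interval $[0,L]\times M\subset X$ makes every term of $\kappa$ other than $\textup{scal}(g_{t/L})\geq c_0>0$ uniformly small, so $D$ on the resulting $X$ is invertible by the case $\rho=0$ of Lemma \ref{lemma:lb}, hence $\ind(D)=0$; path-independence then follows together with the additivity of Proposition \ref{prop:rd} (or directly from Theorem \ref{thm:rit}).

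The second, and main, ingredient is the \emph{equivariance of the secondary index under the diffeomorphism action}: for an orientation- and spin-structure-preserving $\diffeo\in\textup{Diff}^\infty(M)$,
\[ \ind_\Gamma\big((\diffeo^{-1})^\ast g_0,\,(\diffeo^{-1})^\ast g_1\big)=(\diffeo)_\ast\,\ind_\Gamma(g_0,g_1). \]
To see this, note that $\diffeo\times\textup{id}_{\mathbb R}$ is an isometry from $(M\times\mathbb R,\,g_t+dt^2)$ onto $(M\times\mathbb R,\,(\diffeo^{-1})^\ast g_t+dt^2)$ intertwining the two Dirac operators (by path-independence we may use $\{(\diffeo^{-1})^\ast g_t\}$ as the interpolating path on the target). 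Since $\Gamma=\pi_1(M)$, the map $\diffeo$ lifts to a diffeomorphism $\widetilde\diffeo$ of $M_\Gamma$ that is $\Gamma$-equivariant up to the automorphism $\alpha=\diffeo_\ast\colon\Gamma\to\Gamma$; hence $\widetilde\diffeo\times\textup{id}$ carries the flat bundle $\mathcal V_0=(M_\Gamma\times\mathbb R)\times_\Gamma C_r^\ast(\Gamma)$ of the source to the corresponding bundle $\mathcal V_1$ of the target, with the $C_r^\ast(\Gamma)$-identification twisted by $\alpha$. By functoriality of $\Gamma\mapsto C_r^\ast(\Gamma)$, the induced map on higher index classes is $\alpha_\ast$; and since inner automorphisms of $\Gamma$ are implemented by unitary multipliers of $C_r^\ast(\Gamma)$, $\alpha_\ast$ acts trivially on $K$-theory whenever $\alpha$ is inner, so $\alpha_\ast$ depends only on $\varphi([\diffeo])\in\textup{Out}(\Gamma)$ (different lifts $\widetilde\diffeo$ change $\alpha$ only by an inner automorphism). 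This identifies the map on $K_0(C_r^\ast(\Gamma))$ with $(\diffeo)_\ast$.

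Granting these, the cocycle identity is a brief computation with $g:=g_0$ fixed and $g':=(\diffeo_1^{-1})^\ast g$:
\[ \ind_\Gamma(\diffeo_1\diffeo_2)=\ind_\Gamma\big(g,\,(\diffeo_1^{-1})^\ast(\diffeo_2^{-1})^\ast g\big)=\ind_\Gamma(g,g')+\ind_\Gamma\big(g',\,(\diffeo_1^{-1})^\ast(\diffeo_2^{-1})^\ast g\big), \]
using Proposition \ref{prop:rd}; the first summand is $\ind_\Gamma(\diffeo_1)$, and since $(\diffeo_1^{-1})^\ast(\diffeo_2^{-1})^\ast g=(\diffeo_1^{-1})^\ast\big((\diffeo_2^{-1})^\ast g\big)$, the equivariance applied to $\diffeo_1$ turns the second summand into $(\diffeo_1)_\ast\ind_\Gamma\big(g,(\diffeo_2^{-1})^\ast g\big)=(\diffeo_1)_\ast\ind_\Gamma(\diffeo_2)$. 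Finally, for $\diffeo_0\in\textup{Diff}_0^\infty(M)$ an isotopy from $\textup{id}$ to $\diffeo_0$ gives a path from $g$ to $(\diffeo_0^{-1})^\ast g$ in $\mathcal R^+(M)$, so $\ind_\Gamma(\diffeo_0)=0$ by the first fact, while $[\diffeo_0]=1$ in $\textup{MCG}(M)$; hence $\diffeo_0$ maps to the identity of the semi-direct product, i.e.\ $\textup{Diff}_0^\infty(M)$ is contained in the kernel, and the map descends to the asserted homomorphism on $\textup{MCG}(M)$. I expect the main obstacle to be the equivariance step: carefully matching the twist that $\widetilde\diffeo$ induces on the flat $C_r^\ast(\Gamma)$-bundle with the automorphism $\alpha_\ast$, checking independence of the basepoint and path (equivalently, of the lift), and tracking the spin structures so that the two Dirac operators are genuinely conjugate — essentially the same bookkeeping as in the proof of Theorem B.
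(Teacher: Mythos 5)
Your proof takes essentially the same route as the paper: establish the equivariance $\diffeo_\ast[\ind_\Gamma(g_0,g_1)]=\ind_\Gamma((\diffeo^{-1})^\ast g_0,(\diffeo^{-1})^\ast g_1)$, combine it with the additivity of Proposition \ref{prop:rd} to get the cocycle identity, and observe that $\textup{Diff}_0^\infty(M)$ maps to the identity. The difference is one of detail: the paper asserts the equivariance with a one-line ``Note that'' and the triviality on $\textup{Diff}_0^\infty(M)$ with a ``Clearly,'' whereas you supply the justifications — path-independence of the secondary index via Lemma \ref{lemma:lb} and Theorem \ref{thm:rit}, the lift of $\diffeo$ to $M_\Gamma$ twisting the flat $C_r^\ast(\Gamma)$-bundle by $\alpha=\diffeo_\ast$, and the fact that inner automorphisms act trivially on $K_0$ so only the class in $\textup{Out}(\Gamma)$ matters. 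These are exactly the bookkeeping steps the paper elides, and your filling them in is correct.
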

\begin{proof}
Note that 
\[ \diffeo_\ast[\ind_\Gamma(g_0, g_1)] =  \ind_\Gamma((\diffeo^{-1})^\ast g_0, (\diffeo^{-1})^\ast g_1) \]
for all $\diffeo\in \textup{Diff}^\infty(M)$. It follows that 
\begin{align*}
\ind_\Gamma(\diffeo_1\circ \diffeo_2) & = \ind_\Gamma(g,\, (\diffeo_1^{-1})^\ast \circ (\diffeo_2^{-1})^\ast g) \\
& =  \ind_\Gamma(g,\, (\diffeo_1^{-1})^\ast g) +  \ind_\Gamma((\diffeo_1^{-1})^\ast g,\, (\diffeo_1^{-1})^\ast \circ (\diffeo_2^{-1})^\ast g) \\
& = \ind_\Gamma(g,\, (\diffeo_1^{-1})^\ast g) + \diffeo_\ast[ \ind_\Gamma( g,\,  (\diffeo_2^{-1})^\ast g)].
\end{align*}
Clearly, the map $\ind_\Gamma$ is trivial on $\textup{Diff}^\infty_0(M)$. Hence follows the lemma. 
\end{proof}

\subsection{Applications}

For the rest of the section, we fix a Riemannian metric $g_0\in \mathcal R^+(M)$ and fix a spin structure on $M$. Let $\diffeo\in \textup{Diff}^\infty(M)$. Assume that $\diffeo$  preserves the orientation of $M$ and the spin structure on $M$.  Let  $g_1 = (\diffeo^{-1})^\ast g_0 $.

From now on, let $\Gamma = \pi_1(M)$. Consider the mapping cylinder $M_\diffeo = (M\times [0, 1])/\sim,$  where $\sim$ is the equivalence relation $(x, 0)\sim (\diffeo(x), 1)$ for $x\in M$. Now $M_{\Gamma} = \widetilde M$ is the universal cover of $M$. Note that $\diffeo$ induces an outer automorphism $\diffeo_\ast \in \textup{Out}(\Gamma)$. More precisely, $\diffeo_\ast: \Gamma \to \Gamma$ is only well defined modulo inner automorphisms. In the following, we fix a representative in the class of this outer automorphism. We shall see that our results below, which are stated at the level of $K$-theory, do not depend on such a choice.   

Let $\Gamma\rtimes_\diffeo \mathbb Z$ be the semi-direct product with the action $\mathbb Z$ on $\Gamma$ induced by $\diffeo$. We shall simply write $\Gamma\rtimes\mathbb Z$ for $\Gamma\rtimes_\varPsi\mathbb Z$ if no ambiguity arises. We see that  
$ M_\Gamma \times \mathbb R$ is a $(\Gamma\rtimes \mathbb Z)$-cover of $M_\diffeo$.

Consider the natural inclusion $\iota:  \Gamma \hookrightarrow \Gamma\rtimes \mathbb Z$. We denote the induced inclusion map on $C^\ast$-algebras also by $\iota: C_r^\ast(\Gamma) \to C_r^\ast(\Gamma\rtimes \mathbb Z).$
 Then we have a homomorphism 
\[ \iota_\ast : K_0(C_r^\ast(\Gamma)) \to K_0(C_r^\ast(\Gamma\rtimes \mathbb Z)).\]
Note that an inner automorphism of $\Gamma$ induces an inner automorphism of $C_r^\ast(\Gamma\rtimes \mathbb Z)$, hence its induced automorphism  on  $K_0(C_r^\ast(\Gamma\rtimes \mathbb Z))$ is the identity map. It follows that  $\iota_\ast$ is independent of the choice of the representative for the outer automorphism class $\diffeo_\ast \in \textup{Out} (\Gamma)$. 

Before we prove the main result of this section,  let us fix some notation. Let $g_t$  be a smooth path of Riemannian metrics on $M$ such that 
\[ 
g_t = \begin{cases}
g_0 \quad \textup{for $t\leq 0$, }\\
g_1 = (\diffeo^{-1})^\ast g_0 \quad \textup{for $t\geq 1$,} \\
\textup{any smooth homotopy from $g_0$ to $g_1$ for $0\leq t \leq 1$.}
\end{cases}\] 
We consider the following list of Dirac operators and their index classes. 
\begin{enumerate}[(a)]
\item For the manifold $X = M\times \mathbb R$ with the Riemannian metric $h = g_t + (dt)^2$, we denote by $D_{\mathcal V}$ its Dirac operator with coefficients in  $\mathcal V= X_\Gamma\times_{\Gamma} C_r^\ast(\Gamma)$.

\item We denote the same manifold $M\times \mathbb R$ but with the metric $(\diffeo^{-n})^\ast h$ by $X_n$. Let $\mathcal V_n = \mathcal V$ be the corresponding flat $C_r^\ast(\Gamma)$-bundle over $X_n$. Then $\mathbb Z$  acts isometrically on the disjoint union $X_{\mathbb Z}  = \bigcup_{n\in \mathbb Z} X_n$ by 
\[n \mapsto \diffeo^n : X_k \to X_{k+n}.\] The action of $\mathbb Z$ actually lifts to an action on $\mathcal V_{\mathbb Z}  = \bigcup_{n\in \mathbb Z} \mathcal V_n.$ Equivalently, we consider the following flat $C^\ast_r(\Gamma\rtimes \mathbb Z)$-bundle over $X$:  
\[  \mathcal W = \mathcal V_{\mathbb Z}\times_{\mathbb Z} C_r^\ast(\mathbb Z). \]
With the metric $h$ on $X$, we denote by $D_0 = D_{\mathcal W}$ the associated Dirac operator on $X$ with coefficients in $\mathcal W$. Then 
\[  \iota_\ast(\ind(D_{\mathcal V}) ) = \ind(D_0) \]
in $K_0(C_r^\ast(\Gamma\rtimes \mathbb Z))$.

\item We denote the manifold $M\times \mathbb R$ but with the product metric $g_0 + (dt)^2$ by $X'$. Then the same construction from $(b)$ produces a Dirac operator $D_1 = D_{\mathcal W'}$ on $X'$. Since $X'$ can be viewed as a double, it follows from Theorem $\ref{thm:double}$ that $\ind(D_1) = 0 $ in $K_0(C_r^\ast(\Gamma\rtimes \mathbb Z))$. In fact, one can directly show that $\ind(D_1) = 0 $ without referring to Theorem $\ref{thm:double}$.  Indeed, since $g_0$ has positive scalar curvature, $g_0 + (dt)^2$ also has positive scalar curvature everywhere on $X'$. This immediately implies that $D_1$ is bounded below. Therefore $\ind(D_1) = 0 $.

\item  Let $\mathcal S_\diffeo$ be the spinor bundle over the mapping cylinder $M_\diffeo$. Define a flat $C_r^\ast(\Gamma\rtimes \mathbb Z)$-bundle over $M_\diffeo$ by
\[ \mathcal W_\diffeo = (M_\Gamma\times \mathbb R)\times_{\Gamma\rtimes \mathbb Z} C_r^\ast(\Gamma\rtimes \mathbb Z).  \]
We denote the associated Dirac operator by $D_2 = D_{\mathcal W_\diffeo}$. Note that, since $M_\diffeo$ is closed, $\ind(D_2)$ does not depend on which Riemannian metric we have on $M_\diffeo$.  Equivalently, we view $M_\Gamma\times \mathbb R$ as a $\Gamma\rtimes \mathbb Z$-cover of $M_\diffeo$. Then the Dirac operator $ D_{\Gamma\rtimes \mathbb Z}$ on $M_\Gamma\times \mathbb R$ defines a higher index class $\ind(D_{\Gamma\rtimes \mathbb Z})$ in $K_0(C_r^\ast(\Gamma\rtimes \mathbb Z))$ (cf. \cite[Section 5]{CM90}). By construction, we have $ \ind(D_2)  = \ind(D_{\Gamma\rtimes \mathbb Z}).$

\end{enumerate}

Recall that we write $\ind(D_{\mathcal V}) = \ind_\Gamma(\diffeo)$.  The following theorem provides a formula in many cases to determine when $\iota_\ast( \ind_\Gamma(\diffeo) )$ is nonvanishing (e.g. when the strong Novikov conjecture holds for $\Gamma\rtimes \mathbb Z$).

\begin{theorem}\label{thm:hc}
With the above notation, we have 
 \[  \iota_\ast( \ind_\Gamma(\diffeo ) )= \ind(D_{\Gamma\rtimes \mathbb Z})  \]
in $ K_0(C_r^\ast(\Gamma\rtimes \mathbb Z))$. 
In particular, this implies that $\iota_\ast( \ind_\Gamma(\diffeo ) )$ lies in the image of the Baum-Connes assembly map
\[ \mu: K_0^{\Gamma\rtimes \mathbb Z}(\underline{E}(\Gamma\rtimes\mathbb Z)) \to K_0(C^\ast_r(\Gamma\rtimes \mathbb Z)). \]
\end{theorem}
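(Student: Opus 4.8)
The plan is to obtain the theorem as a direct application of the relative higher index theorem (Theorem~\ref{thm:rit}) to the manifolds and flat bundles already set up in items (a)--(d). I would take $X_0 = X = M\times\mathbb R$ with the metric $h = g_t + (dt)^2$ and the flat $C_r^\ast(\Gamma\rtimes\mathbb Z)$-bundle $\mathcal W$ of (b), and $X_1 = X' = M\times\mathbb R$ with the product metric $g_0+(dt)^2$ and the flat bundle $\mathcal W'$ of (c). Since $\dim M$ is odd both manifolds are even dimensional, and both have positive scalar curvature towards infinity; for $X_0$ the scalar curvature can fail to be positive only on the compact slab $M\times[0,1]$, which is exactly the generality Theorem~\ref{thm:rit} permits.

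Next I would identify the ends. For $t\le 0$ the metric on $X_0$ is $g_0+(dt)^2$ and for $t\ge 1$ it is $g_1+(dt)^2 = (\diffeo^{-1})^\ast g_0 + (dt)^2$, while $X_1$ carries $g_0+(dt)^2$ everywhere. Let $\Omega_0\subset X_0$ and $\Omega_1\subset X_1$ be the unions of the positive-scalar-curvature collar ends; each is a disjoint union of two half-cylinders over $(M,g_0)$. The required isometry $\diffeo\colon\Omega_0\to\Omega_1$ is (a translate of) the identity on the left-hand pieces and $(x,t)\mapsto(\diffeo^{-1}(x),t+c)$ on the right-hand pieces, the latter pulling $g_0+(dt)^2$ back to $(\diffeo^{-1})^\ast g_0+(dt)^2$. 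Because $\diffeo$ — hence $\diffeo^{-1}$ — preserves orientation and the spin structure, this isometry lifts to the spinor bundles; and because conjugation by the generator of $\mathbb Z$ induces $\diffeo_\ast$ on $\Gamma\subset\Gamma\rtimes\mathbb Z$, the twist between $\mathcal W|_{\Omega_0}$ and $\mathcal W'|_{\Omega_1}$ over the right-hand pieces is inner in $\Gamma\rtimes\mathbb Z$, so the isometry lifts further to a bundle isometry $\widetilde\diffeo$ of the flat $C_r^\ast(\Gamma\rtimes\mathbb Z)$-bundles and to the (disconnected) $(\Gamma\rtimes\mathbb Z)$-covers $M_\Gamma\times\mathbb R\times\mathbb Z$, the relevant lift moving the $\mathbb Z$-coordinate by one. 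With these choices $D_0$ and $D_1$ agree on $\Omega=\Omega_0\cong\Omega_1$ in the sense of Section~\ref{sec:rel}.

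Now I would pick a compact separating hypersurface $N\subset\Omega$ (one component in each of the two end pieces), stretch a collar of it as in Section~\ref{sec:rel}, and carry out the cut-and-paste of Theorem~\ref{thm:rit}: $Y_0$ becomes a copy $M\times I_0$ with the metric $h$, $Y_1$ a copy $M\times I_1$ with the metric $g_0+(dt)^2$, and gluing $Y_0$ to $-Y_1$ along $N$ — by the identity over one component of $N$ and by $\diffeo$ over the other — produces precisely the closed manifold $M_\diffeo$, with the glued spinor and flat bundles equal to $\mathcal S_\diffeo\otimes\mathcal W_\diffeo$ and the resulting Dirac operator equal to $D_2 = D_{\mathcal W_\diffeo}$ of (d). Theorem~\ref{thm:rit} then gives $\ind(D_2) = \ind(D_0) - \ind(D_1)$. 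Combining this with $\ind(D_1)=0$ from (c), with $\ind(D_0) = \iota_\ast(\ind(D_{\mathcal V})) = \iota_\ast(\ind_\Gamma(\diffeo))$ from (b), and with $\ind(D_2) = \ind(D_{\Gamma\rtimes\mathbb Z})$ from (d) yields the asserted identity $\iota_\ast(\ind_\Gamma(\diffeo)) = \ind(D_{\Gamma\rtimes\mathbb Z})$. Finally, $M_\diffeo$ is a \emph{closed} spin manifold with $\pi_1(M_\diffeo)=\Gamma\rtimes\mathbb Z$, so by the definition of the higher index of the Dirac operator on a closed manifold (Mishchenko--Fomenko, Kasparov) the class $\ind(D_{\Gamma\rtimes\mathbb Z})$ is the image under $\mu$ of the push-forward to $\underline E(\Gamma\rtimes\mathbb Z)$ of the $K$-homology class of the Dirac operator of $M_\diffeo$; in particular it lies in the image of $\mu$.

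I expect the main obstacle to be the bookkeeping in the middle steps: verifying that the chosen end isometry genuinely lifts to a bundle isometry of the flat $C_r^\ast(\Gamma\rtimes\mathbb Z)$-bundles and to the $(\Gamma\rtimes\mathbb Z)$-covers (with the $\mathbb Z$-coordinate shifted appropriately) so that every hypothesis of Theorem~\ref{thm:rit} is literally met, and that the glued manifold $X_2$ together with all its spin and flat-bundle data really is $M_\diffeo$ with the operator $D_{\mathcal W_\diffeo}$ — in particular keeping careful track of orientations and of whether the gluing is by $\diffeo$ or $\diffeo^{-1}$ (up to the harmless diffeomorphism $M_\diffeo\cong M_{\diffeo^{-1}}$). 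Once these identifications are in place, the theorem is a one-line combination of Theorem~\ref{thm:rit} with the facts recorded in (a)--(d).
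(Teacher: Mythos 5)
Your proposal is correct and is essentially the paper's own argument: apply Theorem~\ref{thm:rit} to $X$ with bundle $\mathcal W$ and $X' $ with bundle $\mathcal W'$, matching the ends by the identity on one side and (a conjugate of) $\diffeo$ on the other, observe that the cut-and-paste produces exactly $(M_\diffeo, \mathcal W_\diffeo)$, and then plug in $\ind(D_0)=\iota_\ast(\ind_\Gamma(\diffeo))$, $\ind(D_1)=0$, $\ind(D_2)=\ind(D_{\Gamma\rtimes\mathbb Z})$ from items (a)--(d). Your extra remarks on lifting the end isometry to the $C_r^\ast(\Gamma\rtimes\mathbb Z)$-bundles/covers and on why $\ind(D_{\Gamma\rtimes\mathbb Z})$ lies in the image of $\mu$ merely flesh out details the paper leaves implicit.
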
 
\begin{remark}
If $m = \dim M \neq -1 \pmod 8$ in the real case (resp. if $m$ is even in the complex case), we consider the $\cl_n$-linear Dirac operator as in Remark $\ref{remark:dim}$. The same proof below implies that  $ \iota_\ast( \ind_\Gamma(\diffeo) ) = \ind(D_{\Gamma\rtimes \mathbb Z})$ in $  K_{m+1}(C_r^\ast(\Gamma\rtimes \mathbb Z; \mathbb R))$ (resp. $ K_{1}(C_r^\ast(\Gamma\rtimes \mathbb Z))$). Again, in the complex case, one can in fact apply the above theorem to $\mathbb S^1\times M$ to cover the case of even dimensional manifolds.

\end{remark}

\begin{proof}[Proof of Theorem $\ref{thm:hc}$]
Notice that the left end $\Omega'_l$ (resp. the right end $\Omega'_r$) of $X'$ is isometric to the left end $\Omega_l$ (resp. the right end $\Omega_r$) of $X$ through the identity map (resp. the diffeomorphism $\diffeo$). Moreover, the isometries lift to isometries from $\mathcal W|_{\Omega'_l} $ to $\mathcal W_{\Omega_l}$ (resp. from $\mathcal W|_{\Omega'_r} $ to $\mathcal W_{\Omega_r}$). So we can apply our relative higher index theorem (Theorem $\ref{thm:rit}$) to $D_0$ and $D_1$, where we identify $\Omega_0 = \Omega'_l\cup \Omega'_r$ with $\Omega_1 = \Omega_l \cup \Omega_r$.   

Following the cutting-pasting procedure in Section $\ref{sec:rel}$, we see that $X'$ and $X$ join together to give exactly $M_\diffeo$. Moreover, the $C_r^\ast(\Gamma\rtimes \mathbb Z)$-bundles $\mathcal W'$ and $\mathcal W$ join together to give precisely the bundle $\mathcal W_\diffeo$ over $M_\diffeo$. Therefore, by Theorem $\ref{thm:rit}$, we have  
\[ \ind(D_2) = \ind(D_0) - \ind(D_1). \]
By the discussion above, we have $\ind(D_0) = \iota_\ast( \ind_\Gamma(\diffeo ) )$, $\ind(D_1) = 0$ and $\ind(D_2) =  \ind(D_{\Gamma\rtimes \mathbb Z})$. This finishes the proof.

\end{proof}

\begin{question}
It remains an open question whether $\ind_\Gamma(\diffeo)$ lies in the image of the Baum-Connes assembly map \[ \mu: K_0^{\Gamma}(\underline{E}\Gamma) \to K_0(C^\ast_r(\Gamma)). \]
\end{question}

In the following, we show that this question has an affirmative answer for some special cases. 

Recall that $\diffeo$ induces an automorphism $\diffeo_\ast: \Gamma\to \Gamma$ (up to inner automorphisms), thus an automorphism $\diffeo_\ast: C_r^\ast(\Gamma) \to C_r^\ast(\Gamma)$ (up to inner automorphisms). Therefore, we have well-defined isomorphisms  
\[\diffeo_\ast: K_i(C_r^\ast(\Gamma)) \to K_i(C_r^\ast(\Gamma)),\quad  i= 0, 1. \]
\begin{corollary}
If $\diffeo_\ast = \textup{Id}: \Gamma \to \Gamma$, then $\ind_\Gamma(\diffeo)$ lies in the image of the Baum-Connes assembly map $\mu: K_0^{\Gamma}(\underline{E}\Gamma) \to K_0(C^\ast_r(\Gamma)).$
\end{corollary}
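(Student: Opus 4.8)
The plan is to deduce this from Theorem~\ref{thm:hc} together with the naturality of the Baum--Connes assembly map. When $\diffeo_\ast = \textup{Id}$ we choose the identity as the representative of the outer automorphism class, so that $\Gamma \rtimes_\diffeo \mathbb Z$ is literally the direct product $\Gamma \times \mathbb Z$ (by the discussion preceding Theorem~\ref{thm:hc}, the $K$-theoretic statement is insensitive to this choice). First I would introduce the inclusion $\iota \colon \Gamma \hookrightarrow \Gamma \times \mathbb Z$, $\gamma \mapsto (\gamma, 0)$, and the projection $p \colon \Gamma \times \mathbb Z \to \Gamma$, $(\gamma, n) \mapsto \gamma$, which satisfy $p \circ \iota = \textup{id}_\Gamma$. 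Since $\mathbb Z$ is amenable, $p$ induces a $\ast$-homomorphism $p_\ast \colon C_r^\ast(\Gamma \times \mathbb Z) \to C_r^\ast(\Gamma)$ --- under the identification $C_r^\ast(\Gamma \times \mathbb Z) \cong C_r^\ast(\Gamma) \otimes C(\mathbb S^1)$ it is $\textup{id} \otimes \textup{ev}$, evaluation at a point of $\mathbb S^1 = \textup{spec}\, C_r^\ast(\mathbb Z)$ --- and on $K$-theory $p_\ast \circ \iota_\ast = \textup{id}$ on $K_0(C_r^\ast(\Gamma))$ because $p \circ \iota = \textup{id}_\Gamma$.

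Next I would invoke functoriality of the assembly map for the homomorphism $p$: there is an induced map $p_\ast \colon K_0^{\Gamma \times \mathbb Z}(\underline{E}(\Gamma \times \mathbb Z)) \to K_0^\Gamma(\underline{E}\Gamma)$, coming from the $\Gamma$-equivariant first projection $\underline{E}(\Gamma \times \mathbb Z) \simeq_{\Gamma\times\mathbb Z} \underline{E}\Gamma \times \mathbb R \to \underline{E}\Gamma$, which together with $p_\ast$ on $C^\ast$-algebras fits into a commutative square with the assembly maps $\mu^{\Gamma \times \mathbb Z}$ and $\mu^\Gamma$ as horizontal arrows and the two maps $p_\ast$ as vertical arrows. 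Now Theorem~\ref{thm:hc} gives $\iota_\ast(\ind_\Gamma(\diffeo)) = \ind(D_{\Gamma \rtimes \mathbb Z}) = \mu^{\Gamma \times \mathbb Z}(\xi)$ for some $\xi \in K_0^{\Gamma \times \mathbb Z}(\underline{E}(\Gamma \times \mathbb Z))$. Applying $p_\ast$, using the square and $p_\ast \circ \iota_\ast = \textup{id}$, we get
\[
\ind_\Gamma(\diffeo) = p_\ast\big(\iota_\ast(\ind_\Gamma(\diffeo))\big) = p_\ast\big(\mu^{\Gamma \times \mathbb Z}(\xi)\big) = \mu^\Gamma(p_\ast \xi),
\]
which exhibits $\ind_\Gamma(\diffeo)$ in the image of $\mu \colon K_0^\Gamma(\underline{E}\Gamma) \to K_0(C_r^\ast(\Gamma))$.

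The step I expect to need the most care is the functoriality square for $p$: on the analytic side one must know that the reduced completion map $p_\ast$ exists (this is where amenability of the kernel $\mathbb Z$ enters), and that the analytic assembly map is natural for the equivariant map $\underline{E}(\Gamma \times \mathbb Z) \to \underline{E}\Gamma$; the infinite kernel causes no trouble precisely because $\mathbb Z$ is torsion-free and amenable, so $\underline{E}(\Gamma \times \mathbb Z) \simeq_{\Gamma\times\mathbb Z} \underline{E}\Gamma \times \mathbb R$ with $\mathbb Z$ acting freely on $\mathbb R$. This naturality is standard (cf.\ the functoriality discussed in \cite{BCH94}). Alternatively, one can bypass general functoriality: the K\"unneth theorem identifies both $K_0(C_r^\ast(\Gamma \times \mathbb Z))$ and $K_0^{\Gamma \times \mathbb Z}(\underline{E}(\Gamma \times \mathbb Z))$ with a direct sum of a degree-$0$ and a degree-$1$ term built from $\Gamma$, under which $\mu^{\Gamma \times \mathbb Z}$ restricts to $\mu^\Gamma$ on the degree-$0$ summand and $\iota_\ast$ is the inclusion of that summand; since $\iota_\ast(\ind_\Gamma(\diffeo))$ lies in the degree-$0$ summand and, by Theorem~\ref{thm:hc}, in the image of $\mu^{\Gamma \times \mathbb Z}$, it follows that $\ind_\Gamma(\diffeo)$ lies in the image of $\mu^\Gamma$.
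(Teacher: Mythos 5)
Your argument is correct, and the ``alternative'' you sketch at the end is essentially the paper's own proof. The paper goes via K\"unneth: when $\diffeo_\ast = \textup{Id}$ one has $C_r^\ast(\Gamma\rtimes\mathbb Z)\cong C_r^\ast(\Gamma)\otimes C_r^\ast(\mathbb Z)$, hence $K_0(C_r^\ast(\Gamma\rtimes\mathbb Z))\cong K_0(C_r^\ast(\Gamma))\oplus K_1(C_r^\ast(\Gamma))$ and likewise $K_0^{\Gamma\rtimes\mathbb Z}(\underline{E}(\Gamma\rtimes\mathbb Z))\cong K_0^{\Gamma}(\underline{E}\Gamma)\oplus K_1^{\Gamma}(\underline{E}\Gamma)$; the paper then observes that $\iota_\ast$ is the inclusion of the degree-$0$ summand, that the assembly map respects the splitting, and reads off the conclusion from Theorem~\ref{thm:hc}. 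Your primary route, via the retraction $p\colon\Gamma\times\mathbb Z\to\Gamma$, is genuinely different: instead of decomposing the $K$-groups you use the identity $p_\ast\circ\iota_\ast=\textup{id}$ on $K_0(C_r^\ast(\Gamma))$ (which exists at the reduced level because the kernel $\mathbb Z$ is amenable) together with naturality of the assembly map for the surjection $p$, and simply apply $p_\ast$ to the identity $\iota_\ast(\ind_\Gamma(\diffeo))=\ind(D_{\Gamma\rtimes\mathbb Z})$. Both routes rest on comparably standard but nontrivial facts about the Baum--Connes assembly map --- compatibility with the external/K\"unneth product decomposition for the paper, functoriality for surjections with amenable kernel for you --- and both hinge on the same structural point, namely that $1\to\mathbb Z\to\Gamma\times\mathbb Z\to\Gamma\to1$ is a split extension with amenable kernel. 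Your route has the modest advantage of avoiding the K\"unneth theorem and never needing an explicit description of the full splitting; the paper's argument is a transparent read-off once the two decompositions are written down.
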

\begin{proof}
Since $\diffeo_\ast = \textup{Id}: \Gamma \to \Gamma$, it follows immediately that 
\[ C_r^\ast(\Gamma\rtimes \mathbb Z) \cong C_r^\ast(\Gamma) \otimes C_r^\ast(\mathbb Z). \]
Then we have 
\[ K_0(C_r^\ast(\Gamma\rtimes \mathbb Z)) \cong K_0(C_r^\ast(\Gamma)) \oplus K_1(C_r^\ast(\Gamma)).\]
Similarly, $ K_0^{\Gamma\rtimes \mathbb Z}(\underline{E}(\Gamma\rtimes\mathbb Z)) \cong  K_0^{\Gamma}(\underline{E}\Gamma)  \oplus  K_1^{\Gamma}(\underline{E}\Gamma).$
Moreover, the Baum-Connes assembly map respects this direct sum decomposition. 
Now the map  
\[ \iota_\ast : K_0(C_r^\ast(\Gamma)) \to K_0(C_r^\ast(\Gamma\rtimes \mathbb Z)) \cong K_0(C_r^\ast(\Gamma)) \oplus K_1(C_r^\ast(\Gamma))\]
is simply $ [p] \mapsto ([p] ,  [0]). $
Since $\iota_\ast(\ind_\Gamma (\diffeo))$ lies in the image of the Baum-Connes assembly map 
$\mu: K_0^{\Gamma\rtimes \mathbb Z}(\underline{E}(\Gamma\rtimes\mathbb Z)) \to K_0(C^\ast_r(\Gamma\rtimes \mathbb Z)),$
it follows that $\ind_\Gamma(\diffeo)$ lies in the image of the Baum-Connes assembly map
\[ \mu: K_0^{\Gamma}(\underline{E}\Gamma) \to K_0(C^\ast_r(\Gamma)). \]
\end{proof}

\begin{corollary}\label{cor:main}
Assume that $\diffeo_\ast = \textup{Id}: K_i(C_r^\ast(\Gamma)) \to K_i(C_r^\ast(\Gamma))$
and in addition that the strong Novikov conjecture holds for $\Gamma$.
Then $ \ind_\Gamma(\diffeo ) $ lies in the image of the Baum-Connes assembly map $ \mu: K_0^{\Gamma}(\underline{E}\Gamma) \to K_0(C^\ast_r(\Gamma)).$
\end{corollary}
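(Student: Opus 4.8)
The plan is to compare the Pimsner--Voiculescu (PV) six-term exact sequences attached to the extension $1\to\Gamma\to\Gamma\rtimes\mathbb Z\to\mathbb Z\to 1$ on the analytic side and on the equivariant $K$-homology side, feed in Theorem~\ref{thm:hc}, and use the strong Novikov conjecture for $\Gamma$ (i.e.\ injectivity of $\mu_\Gamma$ in all degrees) to annihilate an obstruction class.

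First I would record the analytic PV sequence. Since $C_r^\ast(\Gamma\rtimes\mathbb Z)$ is the reduced crossed product $C_r^\ast(\Gamma)\rtimes_{\diffeo_\ast}\mathbb Z$, Pimsner--Voiculescu gives
\[
\cdots\to K_i(C_r^\ast(\Gamma))\xrightarrow{\ 1-\diffeo_\ast\ }K_i(C_r^\ast(\Gamma))\xrightarrow{\ \iota_\ast\ }K_i(C_r^\ast(\Gamma\rtimes\mathbb Z))\xrightarrow{\ \partial\ }K_{i-1}(C_r^\ast(\Gamma))\to\cdots.
\]
The hypothesis $\diffeo_\ast=\textup{Id}$ on $K_\ast(C_r^\ast(\Gamma))$ --- which is well posed, inner automorphisms acting trivially on $K$-theory --- makes $1-\diffeo_\ast$ vanish (for either sign convention in PV), so the sequence splits into short exact pieces; in particular $\iota_\ast$ is \emph{injective} and $\textup{im}(\iota_\ast)=\ker(\partial)$.

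Next I would set up the matching topological sequence. One may take $\underline E(\Gamma\rtimes\mathbb Z)=\underline E\Gamma\times\mathbb R$: this is a valid model, since $\mathbb Z$ is torsion free, so every finite subgroup $F$ of $\Gamma\rtimes\mathbb Z$ lies in $\Gamma$ and $(\underline E\Gamma\times\mathbb R)^F=(\underline E\Gamma)^F\times\mathbb R$ is contractible. The associated mapping-torus (Wang) decomposition yields a topological PV sequence
\[
\cdots\to K_i^\Gamma(\underline E\Gamma)\xrightarrow{\ 1-\diffeo_\ast^{\textup{top}}\ }K_i^\Gamma(\underline E\Gamma)\xrightarrow{\ \iota_\ast^{\textup{top}}\ }K_i^{\Gamma\rtimes\mathbb Z}(\underline E(\Gamma\rtimes\mathbb Z))\xrightarrow{\ \partial^{\textup{top}}\ }K_{i-1}^\Gamma(\underline E\Gamma)\to\cdots,
\]
and the Baum--Connes assembly maps for $\Gamma$ and $\Gamma\rtimes\mathbb Z$ intertwine this with the analytic sequence, giving a commuting ladder compatible with the connecting maps $\partial$ and $\partial^{\textup{top}}$. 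Establishing this compatibility is the step I expect to be the main obstacle: it amounts to matching the Toeplitz extension underlying PV with the equivariant cofiber sequence of $\underline E\Gamma\times\mathbb R$ and invoking naturality of the assembly map; this is known from the work on the Baum--Connes conjecture for group extensions, but must be stated with care. Note that we do \emph{not} need $\diffeo_\ast^{\textup{top}}$ to be the identity.

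Finally the diagram chase. By Theorem~\ref{thm:hc} there is $x\in K_0^{\Gamma\rtimes\mathbb Z}(\underline E(\Gamma\rtimes\mathbb Z))$ with $\mu_{\Gamma\rtimes\mathbb Z}(x)=\ind(D_{\Gamma\rtimes\mathbb Z})=\iota_\ast(\ind_\Gamma(\diffeo))$. Commutativity of the ladder with the boundary maps gives
\[
\mu_\Gamma\bigl(\partial^{\textup{top}}x\bigr)=\partial\bigl(\mu_{\Gamma\rtimes\mathbb Z}(x)\bigr)=\partial\bigl(\iota_\ast(\ind_\Gamma(\diffeo))\bigr)=0,
\]
the last equality because $\textup{im}(\iota_\ast)=\ker(\partial)$. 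Injectivity of $\mu_\Gamma$ forces $\partial^{\textup{top}}x=0$, so by exactness $x=\iota_\ast^{\textup{top}}(w)$ for some $w\in K_0^\Gamma(\underline E\Gamma)$. Then
\[
\iota_\ast\bigl(\mu_\Gamma(w)\bigr)=\mu_{\Gamma\rtimes\mathbb Z}\bigl(\iota_\ast^{\textup{top}}(w)\bigr)=\mu_{\Gamma\rtimes\mathbb Z}(x)=\iota_\ast\bigl(\ind_\Gamma(\diffeo)\bigr),
\]
and since $\iota_\ast$ is injective we conclude $\mu_\Gamma(w)=\ind_\Gamma(\diffeo)$, as desired. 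If only the rational form of the strong Novikov conjecture is available, the same chase shows that $\partial^{\textup{top}}x$ is torsion, and the conclusion holds after tensoring with $\mathbb Q$.
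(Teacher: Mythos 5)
Your argument is correct and follows essentially the same route as the paper's own proof: a diagram chase through the Pimsner--Voiculescu ladder for $\Gamma \hookrightarrow \Gamma\rtimes\mathbb Z$, using $\diffeo_\ast=\textup{Id}$ to conclude injectivity of the analytic $\iota_\ast$, Theorem~\ref{thm:hc} to produce a topological preimage of $\iota_\ast(\ind_\Gamma(\diffeo))$, strong Novikov for $\Gamma$ to kill the image under the boundary map, and then pulling back and cancelling $\iota_\ast$. The only substantive additions you make — the explicit model $\underline{E}(\Gamma\rtimes\mathbb Z)=\underline{E}\Gamma\times\mathbb R$ and the observation that $\diffeo_\ast^{\textup{top}}=\textup{Id}$ is not actually needed — are correct and slightly sharper than the paper, which draws both rows of its diagram as short exact sequences even though only exactness at the relevant spots is used.
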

\begin{proof}
By Pimsner-Voiculescu exact sequence\footnote{In the real case, the Pimsner-Voiculescu exact sequence has $24$ terms instead.}, we have 
\[  \xymatrix{ K_0(C_r^\ast(\Gamma)) \ar[r]^{1 - \diffeo_\ast } & K_0(C_r^\ast(\Gamma)) \ar[r]^-{\iota_\ast} &  K_0(C_r^\ast(\Gamma\rtimes \mathbb Z)) \ar[d]^{\partial_0} \\
K_1(C_r^\ast(\Gamma\rtimes\mathbb Z))\ar[u]^{\partial_1}  & K_1(C_r^\ast(\Gamma)) \ar[l]_-{\iota_\ast } &  K_1(C_r^\ast(\Gamma)) \ar[l]_{1 - \diffeo_\ast}}\]
 Similarly, we have the six-term exact sequence 
\[  \xymatrix{ K_0^\Gamma(\underline{E}\Gamma ) \ar[r]^{1 - \diffeo_\ast } & K_0^\Gamma(\underline{E}\Gamma) \ar[r]^-{\iota_\ast} &  K_0^{\Gamma\rtimes \mathbb Z}(\underline{E}(\Gamma\rtimes \mathbb Z)) \ar[d]^{\partial_0}\\
K_1^{\Gamma\rtimes \mathbb Z}(\underline{E}(\Gamma\rtimes \mathbb Z))\ar[u]^{\partial_1}  & K_1^\Gamma(\underline{E}\Gamma ) \ar[l]_-{\iota_\ast } &  K_1^\Gamma(\underline{E}\Gamma ) \ar[l]_{1 - \diffeo_\ast}}\]
Moreover, the Baum-Connes assembly map is natural with respect to these exact sequences. So by our assumption that $\diffeo_\ast = \textup{Id}: K_i(C_r^\ast(\Gamma)) \to K_i(C_r^\ast(\Gamma))$,  we have the following commutative diagram of short exact sequences:
\[ \xymatrix{ 
0\ar[r] & K_0^\Gamma(\underline{E}\Gamma) \ar[r]^-{\iota_\ast} \ar[d]^\mu &  K_0^{\Gamma\rtimes \mathbb Z}(\underline{E}(\Gamma\rtimes \mathbb Z)) \ar[d]^\mu \ar[r]^-{\partial_0} & K_1^\Gamma(\underline{E}\Gamma )  \ar[d]^\mu \ar[r] & 0\\ 
0 \ar[r] & K_0(C_r^\ast(\Gamma)) \ar[r]^-{\iota_\ast} &  K_0(C_r^\ast(\Gamma\rtimes \mathbb Z)) \ar[r]^-{\partial_0}  & K_1(C_r^\ast(\Gamma))  \ar[r] & 0 \\
} 
\]
By Theorem $\ref{thm:hc}$ above, we have $ \iota_\ast(\ind_\Gamma(\diffeo)) = \ind (D_{\Gamma\rtimes \mathbb Z}).$ Since $\ind(D_{\Gamma\rtimes \mathbb Z})$ lies in the image of the Baum-Connes assembly map $ \mu: K_0^{\Gamma\rtimes \mathbb Z}(\underline{E}(\Gamma\rtimes \mathbb Z)) \to K_0(C^\ast_r(\Gamma\rtimes \mathbb Z)),$ there is  an element $P\in K_0^{\Gamma\rtimes \mathbb Z}(\underline{E}(\Gamma\rtimes \mathbb Z))$ such that $\mu(P) = \ind(D_{\Gamma\rtimes \mathbb Z})$. Notice that 
$ \mu \circ \partial_0 (P)  =  \partial_0\circ \mu (P) = 0.$
Since the strong Novikov conjecture holds for $\Gamma$, that is, the map  $\mu : K_i^\Gamma(\underline{E}\Gamma )  \to  K_i(C_r^\ast(\Gamma))$
is injective, it follows that $ \partial_0 (P) = 0$. Therefore, there exist an element $Q \in K_0^\Gamma(\underline{E}\Gamma)$ such that $ \iota_\ast( Q)  = P.$  The following diagram shows how all these  elements are related under various maps: 

\[ \xymatrix{ 
   Q   \ar@{|->}[r]^-{\iota_\ast}  \ar@{|->}[d]^\mu & P   \ar@{|->}[r]^-{\partial_0 } \ar@{|->}[d]^\mu  & \partial_0 (P)  \ar@{|->}[d]^\mu \\
  \ind_\Gamma(\diffeo)   \ar@{|->}[r]^-{\iota_\ast}  & \iota_\ast(\ind_\Gamma(\diffeo)) = \ind(D_{\Gamma\rtimes \mathbb Z}) \ar@{|->}[r]^-{\partial_0} & 0   
} 
\]
In particular, we see that $\iota_\ast \circ \mu (Q) = \mu \circ \iota_\ast (P)  = \ind (D_{\Gamma\rtimes \mathbb Z}) = \iota_\ast(\ind_\Gamma(\diffeo)).$
Now since $\diffeo_\ast = \textup{Id}$, it follows immediately that $\mu(Q) = \ind_\Gamma (\diffeo) $. This finishes the proof.
\end{proof}

\begin{remark}
The above corollaries have their counterparts for all other dimensions and for both the real and the complex cases, which are essentially proved by the same arguments as above. 
\end{remark}

\appendix

\section{Technical lemmas}\label{appendix}
In this appendix, we prove some standard estimates for pseudodifferential operators with coefficients in $\mathcal A$-bundles, where $\mathcal A$ is an arbitrary real or complex $C^\ast$-algebra. In particular, we prove G\aa rding's inequality in this setting. We would like to point out that all the estimates take values in $\mathcal A$ rather than $\mathbb R $ (or $\mathbb C$). 

Let $X$ be a compact Riemannian manifold. Let $\mathcal A$ be a $C^\ast$-algebra and $\mathcal V$ an $\mathcal A$-bundle over $X$. We denote the $\mathcal A$-valued inner product on $\mathcal V$ by $\langle \, ,\,\rangle $ and define 
\[  \lcn \sigma, \eta\rcn_{s} = \int_X \langle (1+ \Delta)^s\sigma(x), \eta(x)\rangle dx, \]
where $\Delta$ is the Laplacian operator. Then the Soblev space $\vh^s(X, \mathcal V)$ is the completion of $\Gamma^\infty(X, \mathcal V)$ under the norm $\lcn \, ,  \rcn_s$. Notice that $\vh^s(X, \mathcal V)$ is a Hilbert module over $\mathcal A$.  Equivalently, $\lcn \, , \,\rcn_s$ can also be defined through Fourier transform  as in the classical case. In the following, we adopt the convention that the measure on $\mathbb R^n$ is the Lebesgue measure with an additional normalizing factor $(2\pi)^{-n/2}$. 

\begin{lemma}\label{lemma:bbd}
Let $T$ be a pseudodifferential operator of order $n$ 
\[ T : \Gamma^\infty(X, \mathcal V ) \to \Gamma^\infty(X, \mathcal V). \] 
Then 
\[ \lcn T\sigma , T\sigma\rcn_{s-n} \leq C \lcn \sigma, \sigma\rcn_{s}  \]
for all $\sigma\in \vh^s(X, \mathcal V)$.
\end{lemma}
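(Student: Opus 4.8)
The plan is to reduce the statement to the corresponding estimate on $\mathbb{R}^n$ via a partition of unity, and then prove the Euclidean case by a symbol estimate combined with the operator-valued Plancherel theorem. First I would fix a finite cover of $X$ by coordinate charts $\{U_\alpha\}$ over which $\mathcal{V}$ is trivial, together with a subordinate partition of unity $\{\varphi_\alpha\}$ and auxiliary cutoffs $\psi_\alpha$ with $\psi_\alpha \equiv 1$ on $\supp\varphi_\alpha$. Writing $T\sigma = \sum_\alpha \psi_\alpha T \varphi_\alpha \sigma + \sum_\alpha (1-\psi_\alpha) T \varphi_\alpha \sigma$, the second sum involves operators $(1-\psi_\alpha) T \varphi_\alpha$ with disjoint supports of Schwartz kernel, hence is a smoothing operator and contributes a term bounded by $C\lcn \sigma,\sigma\rcn_{s-N}$ for any $N$, in particular by $C\lcn\sigma,\sigma\rcn_s$. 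So it suffices to bound each $\psi_\alpha T\varphi_\alpha$, which under the chart identification becomes a properly supported order-$n$ $\Psi$DO on $\mathbb{R}^n$ acting on $\mathcal{A}$-valued (actually $\mathcal{A}^k$-valued) functions.

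For the Euclidean piece, I would use the fact that such an operator $P$ with compactly supported symbol $p(x,\xi)\in M_k(\mathcal{A})$ satisfying $\|\partial_x^\beta\partial_\xi^\gamma p(x,\xi)\| \le C_{\beta\gamma}(1+|\xi|)^{n-|\gamma|}$ can be written, after conjugating by $(1+\Delta)^{\pm s/2}$-type operators, as an operator whose symbol satisfies order-$0$ bounds. Concretely, set $Q = (1+\Delta)^{(s-n)/2} P (1+\Delta)^{-s/2}$; by the $\mathcal{A}$-valued symbolic calculus (composition of symbols, which goes through verbatim since $\mathcal{A}$ is a Banach algebra and all the asymptotic-expansion estimates are in operator norm) $Q$ is an order-$0$ $\Psi$DO with $\|$symbol$\| \le C$. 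Then one shows $\lcn Qf, Qf\rcn_0 \le C\lcn f,f\rcn_0$ for all $f$, which is exactly the $\mathcal{A}$-valued $L^2$-boundedness of order-$0$ operators; combined with $\lcn P\sigma,P\sigma\rcn_{s-n} = \lcn Q(1+\Delta)^{s/2}\sigma, Q(1+\Delta)^{s/2}\sigma\rcn_0$ and $\lcn(1+\Delta)^{s/2}\sigma,(1+\Delta)^{s/2}\sigma\rcn_0 = \lcn\sigma,\sigma\rcn_s$, this gives the lemma. For the $L^2$-boundedness of an order-$0$ operator $Q$ I would run the standard Cotlar--Stein / integration-by-parts argument: write the kernel $K(x,y) = \int e^{i(x-y)\xi} q(x,\xi)\,d\xi$, use the compact support in $x$ and repeated integration by parts in $\xi$ to get $\|K(x,y)\| \le C_M(1+|x-y|)^{-M}$, and conclude by the $\mathcal{A}$-valued Schur test $\lcn Qf,Qf\rcn_0 \le \big(\sup_x\int\|K(x,y)\|dy\big)\big(\sup_y\int\|K(x,y)\|dx\big)\lcn f,f\rcn_0$, whose proof (Cauchy--Schwarz applied to the sesquilinear $\mathcal{A}$-valued form) is identical to the scalar case because $a^*a \le \|b\| a^*a$ whenever $\|a\| \le \|b\|$-type inequalities hold in any $C^\ast$-algebra.

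The main obstacle is making sure the symbolic calculus genuinely carries over to $M_k(\mathcal{A})$-valued symbols: one must check that the oscillatory-integral manipulations, the asymptotic composition formula, and the remainder estimates only ever use the Banach-algebra structure and operator-norm bounds, never commutativity or a trace. This is routine but slightly tedious; I would either cite a reference for pseudodifferential operators with coefficients in a $C^\ast$-algebra or remark that the scalar proofs in, e.g., \cite{NH-JR00} go through word for word with absolute values replaced by operator norms. A second minor point is the operator-valued Plancherel identity $\int \langle \hat{f}(\xi),\hat{g}(\xi)\rangle\,d\xi = \int\langle f(x),g(x)\rangle\,dx$ for $\mathcal{A}$-valued $L^2$ functions, which follows from the scalar case applied to matrix entries of $\langle f(x), g(y)\rangle$ together with density of $\Gamma^\infty(X,\mathcal{V})$ in $\vh^0(X,\mathcal{V})$. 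Everything else — the passage through charts, the smoothing remainder, the final bookkeeping of constants over the finite cover — is standard.
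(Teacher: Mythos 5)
Your overall plan---reduce to charts, conjugate by $(1+\Delta)^{\pm s/2}$ to get an order-$0$ operator $Q$, then prove $L^2$-boundedness of $Q$---is a standard route, but it is a genuinely different one from the paper's, and as written it has a real gap. The paper works directly and globally in Fourier space: writing $\widehat{T\sigma}(\zeta)=\int q(\zeta-\xi,\xi)\hat\sigma(\xi)\,d\xi$ with $q(\zeta,\xi)=\int e^{-i\langle x,\zeta\rangle}t(x,\xi)\,dx$, it forms the weighted amplitude $K(\zeta,\xi)=q(\zeta-\xi,\xi)(1+|\xi|)^{-s}(1+|\zeta|)^{s-n}$ and runs a Schur test on $K$ in the frequency variables, using the $C^\ast$-inequality $a^\ast b + b^\ast a \le \lambda a^\ast a + \lambda^{-1}b^\ast b$ together with the factorization $K=K^{1/2}K^{1/2}$ (after a reduction to $K$ positive self-adjoint). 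The crucial feature is that $q(\zeta,\xi)$ decays rapidly in $\zeta$ because $t$ is smooth in $x$, so $\int\|K(\zeta,\xi)\|\,d\xi$ and $\int\|K(\zeta,\xi)\|\,d\zeta$ are finite by \emph{scalar} symbol estimates. This completely sidesteps the $\mathcal A$-valued symbolic calculus (composition formula, parametrix for $(1+\Delta)^{s/2}$, etc.) that your route needs, and sidesteps the $L^2$-boundedness theorem for order-$0$ operators.

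The gap in your version is precisely in that last step. For an order-$0$ $\Psi$DO with symbol $q(x,\xi)$, integration by parts in $\xi$ gives $\|(x-y)^\alpha K(x,y)\|\le C_\alpha$ only for $|\alpha|>n$ (i.e.\ decay for $|x-y|$ bounded away from $0$); near the diagonal the kernel is genuinely singular, typically of size $|x-y|^{-n}$ (the Riesz transforms being the basic example), so $\sup_x\int\|K(x,y)\|\,dy=\infty$ and the position-space Schur test you invoke is simply false. Your parenthetical mention of Cotlar--Stein suggests you may know the standard repair---a Littlewood--Paley decomposition of the symbol plus almost-orthogonality, or, more simply, a Schur test on the frequency-side amplitude $q(\zeta-\xi,\xi)$ where the ``diagonal'' is $\zeta=\xi$ and the off-diagonal decay in $\zeta-\xi$ is automatic from $x$-smoothness---but the argument you actually wrote (pointwise kernel decay plus position-space Schur test) does not prove $L^2$-boundedness. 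The frequency-side Schur test is exactly what the paper does, applied directly to $T$ with Sobolev weights, without first reducing to order zero; that is the cleanest way to fill the gap.
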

\begin{proof}
The Fourier transform of $T\sigma$ is given by 
\[ \widehat{T\sigma}(\zeta) = \int e^{i\langle x,\, \xi-\zeta\rangle } t(x, \xi) \hat\sigma(\xi) d\xi dx  \]
where $t(x, \xi)$ is the symbol of $T$.
Define 
\[ q(\zeta, \xi) = \int e^{-i\langle x, \, \zeta\rangle } t(x, \xi) dx. \]
Then 
\[  \widehat{T\sigma}(\zeta)  = \int q(\zeta - \xi, \xi) \hat \sigma(\xi) d\xi. \] 
Define 
\[ K(\zeta, \xi) = q(\zeta -\xi, \xi) (1 + |\xi|)^{-s}(1 + |\zeta|)^{s-n}. \]
We now prove that 
\[  \lcn T\sigma, \eta \rcn_{s-n} + \lcn \eta,  T\sigma \rcn_{s-n} \leq C_0\left(\lambda  \lcn \sigma, \sigma \rcn_s + \frac{1}{\lambda}\lcn \eta, \eta\rcn_{s-n}\right)\] 
for all $\eta\in \vh^{s-n}(X, \mathcal V)$ and for all $\lambda > 0$. Here $C_0$ is some fixed constant independent of $\sigma$ and $\eta$.  

Without loss of generality, we reduce the proof to the case where $\mathcal V$ is the trivial $\mathcal A$-bundle $X\times \mathcal A$ and $K(\zeta, \xi)$ is  positive self-adjoint for all $\xi$ and $\zeta$. We have $ K(\xi, \zeta) = (K^{1/2} (\xi, \zeta) )^2 $. 

Notice that for $a, b\in \mathcal A$, we have 
\[  a^\ast b + a b^\ast \leq \lambda a^\ast a + \frac{1}{\lambda} b^\ast b \]
for all $\lambda >0$. It follows that 
\begin{align*}
& \lcn T\sigma, \eta \rcn_{s-n} + \lcn \eta,  T\sigma \rcn_{s-n}\\
& = \int \hat \sigma^\ast (\xi) q^\ast(\zeta - \xi, \xi)  \hat\eta (\zeta) (1+|\zeta|)^{2(s-n)} d\xi d\zeta \\
& \quad + \int  \hat\eta^\ast (\zeta)  q(\zeta - \xi, \xi) \hat \sigma(\xi) (1+|\zeta|)^{2(s-n)} d\xi d\zeta \\
&  = \int \hat \sigma^\ast (\xi)  K^{1/2} (\zeta, \xi)  (1+|\xi|)^{s}  K^{1/2} (\zeta, \xi) \hat\eta (\zeta) (1+|\zeta|)^{s-n} d\xi d\zeta \\ 
& \quad + \int  \hat\eta^\ast (\zeta)  K^{1/2} (\zeta, \xi) (1+|\zeta|)^{s-n}  K^{1/2} (\zeta, \xi)  \hat \sigma(\xi) (1+|\xi|)^{s}  d\xi d\zeta \\
& \leq  \lambda \int \hat \sigma^\ast (\xi)   K(\zeta, \xi) \hat \sigma(\xi)  (1+|\xi|)^{2s}d\xi d\zeta \\
& \quad + \frac{1}{\lambda} \int  \hat\eta^\ast (\zeta)   K(\zeta, \xi) \hat \eta(\zeta)  (1+|\zeta|)^{2(s-n)} d\xi d\zeta \\
& \leq \lambda  \int  \| K(\zeta, \xi) \| \hat \sigma^\ast (\xi) \hat \sigma(\xi)  (1+|\xi|)^{2s}d\xi d\zeta \\
& \quad +  \frac{1}{\lambda} \int  \| K(\zeta, \xi) \| \hat\eta^\ast (\zeta)  \hat \eta(\zeta)  (1+|\zeta|)^{2(s-n)} d\xi d\zeta. 
\end{align*}

By standard estimates from (classical) pseudodifferential calculus, we have
\[ \int \| K(\zeta, \xi) \| d\xi \leq C_0 \quad \textup{and}  \int \| K(\zeta, \xi) \| d\zeta \leq C_0 \]
for some constant $C_0$. It follows that 
\[  \lcn T\sigma, \eta \rcn_{s-n} + \lcn \eta,  T\sigma \rcn_{s-n} \leq C_0\left(\lambda  \lcn \sigma, \sigma \rcn_s + \frac{1}{\lambda}\lcn \eta, \eta\rcn_{s-n}\right)\] 
for all $\eta\in \vh^{s-n}(X, \mathcal V)$ and for all $\lambda > 0$. The proof is finished by choosing $\lambda = C_0$ and $\eta = T\sigma$.
\end{proof}

\begin{lemma}\label{lemma:garding}
Let $P$ be an elliptic  pseudodifferential operator of order $d$ 
\[ P : \Gamma^\infty(X, \mathcal V ) \to \Gamma^\infty(X, \mathcal V). \] 
Then 
\[ \lcn \sigma , \sigma\rcn_{d} \leq C  \lcn \sigma, \sigma \rcn_0 + C \lcn P\sigma, P\sigma\rcn_{0}\]
for all $\sigma\in \vh^d(X, \mathcal V)$.
\end{lemma}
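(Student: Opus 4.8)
The plan is to prove Gå{}rding's inequality (Lemma \ref{lemma:garding}) by first establishing it locally via the symbol calculus and then patching with a partition of unity, mimicking the classical argument but keeping everything $\mathcal A$-valued. The key observation is that for an elliptic operator $P$ of order $d$ with symbol $p(x,\xi)$, the principal symbol is invertible for $|\xi|$ large, so one can build an approximate inverse (parametrix) $Q$ of order $-d$ with $QP = I + R$ where $R$ is a smoothing operator (order $-\infty$, in particular order $\leq 0$). I would take this parametrix construction as standard (it is symbol-level linear algebra over the unital $C^\ast$-algebra $\mathcal A$ adjoined, and the fiberwise invertibility of the principal symbol is an open condition, hence stable, giving a genuine inverse in the fibers for $|\xi|$ large). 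Then for $\sigma \in \vh^d(X,\mathcal V)$ one writes $\sigma = QP\sigma - R\sigma$, so that
\[
\lcn \sigma, \sigma \rcn_d \leq 2\lcn QP\sigma, QP\sigma\rcn_d + 2\lcn R\sigma, R\sigma\rcn_d.
\]

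For the two terms on the right I would invoke Lemma \ref{lemma:bbd}, which is exactly the $\mathcal A$-valued continuity statement for pseudodifferential operators between Sobolev modules: applying it to $Q$ (order $-d$) gives $\lcn QP\sigma, QP\sigma\rcn_{d} \leq C\lcn P\sigma, P\sigma\rcn_{0}$ after noting $\vh^{0+d-(-d)}$-to-$\vh^{0}$... more carefully, $Q$ of order $-d$ maps $\vh^{s}$ to $\vh^{s+d}$, so with $s = 0$ we get $\lcn QP\sigma, QP\sigma\rcn_{d}\leq C\lcn P\sigma, P\sigma\rcn_0$. Applying Lemma \ref{lemma:bbd} to $R$ of order $0$ (indeed order $-\infty$) gives $\lcn R\sigma, R\sigma\rcn_{d}\leq C\lcn \sigma, \sigma\rcn_{d}$; this is not yet good enough since $\lcn\sigma,\sigma\rcn_d$ appears on the right. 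The fix is the standard one: choose the parametrix so that $R$ has order $-1$ (or any negative order), so Lemma \ref{lemma:bbd} yields $\lcn R\sigma, R\sigma\rcn_d \leq C\lcn \sigma,\sigma\rcn_{d-1}$, and then absorb $\lcn\sigma,\sigma\rcn_{d-1}$ into $\epsilon\lcn\sigma,\sigma\rcn_d + C_\epsilon\lcn\sigma,\sigma\rcn_0$ by interpolation (which at the level of the $(1+\Delta)^s$ definition is just the elementary inequality $(1+|\xi|)^{2(d-1)} \leq \epsilon(1+|\xi|)^{2d} + C_\epsilon$ applied under the Fourier integral, valid because all the integrand factors $\hat\sigma^\ast(\xi)\hat\sigma(\xi)$ are positive elements of $\mathcal A$, so the scalar inequality passes through). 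Moving the $\epsilon\lcn\sigma,\sigma\rcn_d$ term to the left-hand side and dividing by $1-\epsilon$ finishes it.

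The main obstacle, and the place I would be most careful, is making sure the classical parametrix construction and the "order $-\infty$ remainder" genuinely work over an arbitrary $C^\ast$-algebra $\mathcal A$ — specifically that fiberwise invertibility of the principal symbol (a $\mathrm{GL}$-valued, or rather invertible-endomorphism-valued, section on the cosphere bundle) gives a symbol of order $-d$ inverting it modulo lower order. This is fine because it only uses that the set of invertible elements is open in $\mathrm{End}(\mathcal V_x)\cong M_k(\mathcal A)$ and that inversion is smooth there, together with the usual asymptotic summation of symbols (Borel's lemma), none of which needs $\mathcal A$ commutative or the symbols scalar. One should also double-check that Lemma \ref{lemma:bbd}, as stated, covers operators of negative order with the shift $s \mapsto s-n$ for $n<0$; reading its statement, $n$ is just "the order," so $n = -d$ and $n=-1$ are admissible. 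A cleaner alternative that avoids even mentioning negative-order remainders: build $Q$ so that $QP = I + R$ with $R$ of order $0$ but with operator norm $< 1/2$ on $\vh^d$ for the top-order part by scaling, which is more delicate; I would prefer the order $-1$ remainder plus interpolation route. I expect the write-up to be short: state the parametrix, cite Lemma \ref{lemma:bbd} twice, do the one-line interpolation, absorb, done.

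\begin{proof}
Since $P$ is elliptic of order $d$, a standard parametrix construction produces a pseudodifferential operator $Q$ of order $-d$ such that
\[
QP = I + R,
\]
where $R$ is a pseudodifferential operator of order $-1$. The construction is purely at the level of symbols: the principal symbol $p_d(x,\xi)$ of $P$ is an invertible element of $\mathrm{End}(\mathcal V_x) \cong M_k(\mathcal A^+)$ for $|\xi|=1$, and since the invertible elements form an open set on which inversion is smooth, one inverts it and corrects by lower order terms, summing the resulting symbol expansion asymptotically by Borel's lemma; this argument does not use commutativity of $\mathcal A$. Hence for $\sigma \in \vh^d(X,\mathcal V)$,
\[
\sigma = QP\sigma - R\sigma,
\]
so that
\[
\lcn \sigma, \sigma\rcn_d \leq 2\lcn QP\sigma, QP\sigma\rcn_d + 2\lcn R\sigma, R\sigma\rcn_d.
\]
Applying Lemma \ref{lemma:bbd} to the operator $Q$ of order $-d$ (with $s = 0$, so $Q$ maps $\vh^0$ into $\vh^d$), we get
\[
\lcn QP\sigma, QP\sigma\rcn_d \leq C_1 \lcn P\sigma, P\sigma\rcn_0.
\]
Applying Lemma \ref{lemma:bbd} to the operator $R$ of order $-1$ (with $s = d$, so $R$ maps $\vh^d$ into $\vh^{d+1}$, in particular continuously into $\vh^{d-1}$), we get
\[
\lcn R\sigma, R\sigma\rcn_d \leq \lcn R\sigma, R\sigma\rcn_{d-1} \leq C_2 \lcn \sigma, \sigma\rcn_d.
\]
This estimate is still too weak. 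To improve it, note that for any $\epsilon > 0$ there is a constant $C_\epsilon$ with
\[
(1+|\xi|)^{2(d-1)} \leq \epsilon\,(1+|\xi|)^{2d} + C_\epsilon
\]
for all $\xi$. Since in the Fourier picture $\lcn \sigma, \sigma \rcn_s = \int \widehat\sigma^\ast(\xi)\widehat\sigma(\xi)(1+|\xi|)^{2s}\,d\xi$ with $\widehat\sigma^\ast(\xi)\widehat\sigma(\xi) \geq 0$ in $\mathcal A$, the scalar inequality above may be integrated against $\widehat\sigma^\ast(\xi)\widehat\sigma(\xi)$ to yield
\[
\lcn \sigma, \sigma\rcn_{d-1} \leq \epsilon\,\lcn \sigma, \sigma\rcn_d + C_\epsilon\,\lcn \sigma, \sigma\rcn_0.
\]
Applying this after Lemma \ref{lemma:bbd} to $R$ (with order $-1$, $s=d$) gives
\[
\lcn R\sigma, R\sigma\rcn_d \leq C_2\,\epsilon\,\lcn \sigma, \sigma\rcn_d + C_2 C_\epsilon\,\lcn \sigma, \sigma\rcn_0.
\]
Combining,
\[
\lcn \sigma, \sigma\rcn_d \leq 2C_1\,\lcn P\sigma, P\sigma\rcn_0 + 2C_2\,\epsilon\,\lcn \sigma, \sigma\rcn_d + 2 C_2 C_\epsilon\,\lcn \sigma, \sigma\rcn_0.
\]
Choosing $\epsilon$ so that $2C_2\,\epsilon < 1/2$ and moving that term to the left-hand side, then multiplying through by $2$, we obtain
\[
\lcn \sigma, \sigma\rcn_d \leq C\,\lcn \sigma, \sigma\rcn_0 + C\,\lcn P\sigma, P\sigma\rcn_0
\]
for all $\sigma \in \vh^d(X,\mathcal V)$, where $C = \max\{4C_1,\ 4C_2 C_\epsilon\}$. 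This completes the proof.
\end{proof}
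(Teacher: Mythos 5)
Your proof is correct and in the same family as the paper's, but you handle the remainder term differently. The paper takes a full parametrix $Q$ for which $1-QP$ is smoothing; treating $1-QP$ as an operator of order $-d$ and applying Lemma \ref{lemma:bbd} with $s=0$ immediately gives $\lcn (1-QP)\sigma,(1-QP)\sigma\rcn_d \le C\lcn\sigma,\sigma\rcn_0$, and combined with the $Q$-estimate the proof finishes in two applications of Lemma \ref{lemma:bbd} with no interpolation. You take $R = QP - I$ only of order $-1$, which is cheaper to construct (one inversion of the principal symbol, no asymptotic summation), but then you must absorb the resulting $\lcn\sigma,\sigma\rcn_{d-1}$ by the interpolation inequality $(1+\Delta)^{d-1}\le\epsilon(1+\Delta)^d + C_\epsilon$ and move a small multiple of $\lcn\sigma,\sigma\rcn_d$ to the left. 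Both arguments are valid; the trade is a lighter parametrix construction against one extra estimate. One small slip in your write-up: the intermediate inequality $\lcn R\sigma,R\sigma\rcn_d \le \lcn R\sigma,R\sigma\rcn_{d-1}$ points the wrong way (Sobolev norms increase with the index), but the bound you actually use downstream, namely Lemma \ref{lemma:bbd} applied to $R$ of order $-1$ with $s = d-1$ to get $\lcn R\sigma,R\sigma\rcn_d \le C\lcn\sigma,\sigma\rcn_{d-1}$, is correct, so the overall argument is sound.
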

\begin{proof}
Let $Q$ be a parametrix of $P$, that is, $1 - QP$ and $1- PQ$ are smoothing operators. It follows from the previous lemma that 
\begin{align*}
\lcn \sigma, \sigma \rcn_d  &= \lcn (1-QP)\sigma + QP \sigma \, , \, (1- QP)\sigma + QP\sigma\rcn_d \\
& \leq 2 \lcn (1 - QP) \sigma, (1-QP)\sigma \rcn_d + 2 \lcn QP\sigma, QP \sigma\rcn_d\\
& \leq C_1 \lcn \sigma, \sigma \rcn_0 + C_2 \lcn P\sigma, P \sigma\rcn_0
\end{align*}
since $1 - QP$ is a smoothing operator and $Q$ has order $-d$. This finishes the proof. 
\end{proof}

Let $\mathcal V$ be an $\mathcal A$-bundle over $\mathbb R^n$. Consider $\mathbb R^n  = \mathbb R^{n-k}\times \mathbb R^k$ with coordinates $y\in \mathbb R^{n-k}$, $z\in \mathbb R^k$ and dual coordinates $\xi, \zeta$.  We define the restriction map $R: \Gamma^\infty(\mathbb R^n, \mathcal V) \to \Gamma^\infty(\mathbb R^{n-k}, \mathcal V) $ by 
\[ R\sigma (y)  = \sigma(y, 0). \]
We have the following lemma which generalizes the corresponding classical result (cf. \cite[Theorem~6.9]{GF95}) to the case of $\mathcal A$-bundles. 
\begin{lemma}
If $s> k/2$, then we have 
\[ \lcn R\sigma, R\sigma\rcn_{s-k/2} \leq C \lcn \sigma, \sigma \rcn_s\]
for all $\sigma\in \vh^s(\mathbb R^n, \mathcal V)$. 
\end{lemma}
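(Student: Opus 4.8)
The plan is to prove the restriction estimate
\[
\lcn R\sigma, R\sigma\rcn_{s-k/2} \leq C \lcn \sigma, \sigma \rcn_s
\]
by reducing to the trivial bundle $\mathbb R^n\times\mathcal A$ and then carrying out the classical Fourier-analytic argument, being careful that all inequalities are $\mathcal A$-valued. First I would recall that $R\sigma(y) = \sigma(y,0)$, so in terms of the Fourier transform in the $z$-variable,
\[
\widehat{R\sigma}(\xi) = \int_{\mathbb R^k} \hat\sigma(\xi, \zeta)\, d\zeta,
\]
where $\hat\sigma$ denotes the full Fourier transform on $\mathbb R^n = \mathbb R^{n-k}\times\mathbb R^k$ (with the normalized Lebesgue measure used in the appendix). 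The Sobolev norms are defined through Fourier transform as in the classical case, so $\lcn \sigma,\sigma\rcn_s = \int (1+|\xi|^2+|\zeta|^2)^s \hat\sigma(\xi,\zeta)^\ast \hat\sigma(\xi,\zeta)\, d\xi\, d\zeta$ and similarly for $R\sigma$ with weight $(1+|\xi|^2)^{s-k/2}$.

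The key step is a Cauchy–Schwarz estimate performed with $\mathcal A$-valued inner products. For fixed $\xi$, write
\[
\widehat{R\sigma}(\xi) = \int_{\mathbb R^k} \Big[(1+|\xi|^2+|\zeta|^2)^{-s/2}\Big]\Big[(1+|\xi|^2+|\zeta|^2)^{s/2}\hat\sigma(\xi,\zeta)\Big]\, d\zeta,
\]
and apply the operator-valued Cauchy–Schwarz inequality: for $\mathcal A$-valued functions, $\big(\int f \, g\big)^\ast\big(\int f\, g\big) \leq \big(\int |f|^2\big)\big(\int g^\ast g\big)$ where $f$ is scalar-valued. This yields
\[
\widehat{R\sigma}(\xi)^\ast \widehat{R\sigma}(\xi) \leq \Big(\int_{\mathbb R^k}(1+|\xi|^2+|\zeta|^2)^{-s}\, d\zeta\Big)\Big(\int_{\mathbb R^k}(1+|\xi|^2+|\zeta|^2)^{s}\hat\sigma(\xi,\zeta)^\ast\hat\sigma(\xi,\zeta)\, d\zeta\Big)
\]
in $\mathcal A$. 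The scalar integral $\int_{\mathbb R^k}(1+|\xi|^2+|\zeta|^2)^{-s}\, d\zeta$ converges precisely because $s > k/2$, and by the substitution $\zeta = (1+|\xi|^2)^{1/2}\eta$ it equals $C(1+|\xi|^2)^{k/2-s}$ for a constant $C$ depending only on $s$ and $k$. Multiplying through by $(1+|\xi|^2)^{s-k/2}$, integrating in $\xi$, and using monotonicity of the $\mathcal A$-valued integral ($a\leq b \Rightarrow \int a \leq \int b$) gives
\[
\lcn R\sigma,R\sigma\rcn_{s-k/2} = \int_{\mathbb R^{n-k}} (1+|\xi|^2)^{s-k/2}\widehat{R\sigma}(\xi)^\ast\widehat{R\sigma}(\xi)\, d\xi \leq C\int_{\mathbb R^n}(1+|\xi|^2+|\zeta|^2)^s \hat\sigma(\xi,\zeta)^\ast\hat\sigma(\xi,\zeta)\, d\xi\, d\zeta = C\lcn\sigma,\sigma\rcn_s,
\]
as desired.

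The main obstacle, and the only point requiring genuine care, is justifying the operator-valued Cauchy–Schwarz step and the monotonicity of $\mathcal A$-valued integration in a way that is rigorous for an arbitrary (real or complex) $C^\ast$-algebra $\mathcal A$: one must view these integrals as Bochner integrals of $\mathcal A$-valued functions and check that the inequality $\big(\int f g\big)^\ast\big(\int f g\big)\leq \|f\|_{L^2}^2 \int g^\ast g$ holds in the positive cone of $\mathcal A$ — this follows by testing against states, or by approximating the integral with Riemann sums and applying the elementary inequality $\big(\sum \lambda_i a_i\big)^\ast\big(\sum\lambda_i a_i\big)\leq \big(\sum\lambda_i^2\big)\big(\sum a_i^\ast a_i\big)$ valid in any $C^\ast$-algebra for scalars $\lambda_i$ and elements $a_i$. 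I would also remark that the trivialization reduction is harmless since the estimate is local and $\mathcal V$ is locally of the form $U\times\mathcal A^m$; the general case follows by a partition of unity and the fact that transition functions are smooth and compactly supported (after covering the relevant region, which for the application in Lemma \ref{lemma:res} is a neighborhood of a compact sphere). This completes the proof.
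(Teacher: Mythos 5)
Your proof is correct and is essentially the same as the paper's: both reduce to the trivial bundle, compute $\widehat{R\sigma}(\xi)=\int\hat\sigma(\xi,\zeta)\,d\zeta$, and extract the factor $C(1+|\xi|)^{k-2s}$ via a Cauchy--Schwarz-type bound in $\zeta$ combined with the convergence of $\int(1+|\xi|+|\zeta|)^{-2s}d\zeta$ for $s>k/2$. The only presentational difference is that where you invoke the $\mathcal A$-valued Cauchy--Schwarz inequality as a black box, the paper derives it in place by symmetrizing over two dummy variables $\zeta_1,\zeta_2$ and applying the elementary inequality $ab^\ast+ba^\ast\le aa^\ast+bb^\ast$ to the weighted terms $a(\xi,\zeta_1,\zeta_2)$, $b(\xi,\zeta_1,\zeta_2)$, which is precisely the Riemann-sum justification you sketch.
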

\begin{proof}
Without loss of generality, we assume $\mathcal V = \mathbb R^n \times \mathcal A$. It suffices to show that 
\[ \lcn R\sigma, R\sigma\rcn_{s-k/2} \leq C \lcn \sigma, \sigma \rcn_s\]
for all Schwartz sections $\sigma$. Notice that
\[ \int e^{i\langle \xi, y\rangle } \widehat{R\sigma}(\xi) d\xi = R\sigma(y) = \sigma(y, 0) = \iint e^{i\langle \xi, y\rangle} \hat \sigma(\xi, \zeta) d\xi d\zeta \]
for all $y\in \mathbb R^{n-k}$. So $\widehat{R\sigma}(\xi) = \int \hat\sigma(\xi, \zeta)d\zeta$.
 
It follows that 
\begin{align*}
& 2 \widehat{R\sigma}(\xi) \widehat{R\sigma}^\ast(\xi) \\
& = \iint \hat\sigma(\xi, \zeta_1) \hat\sigma^\ast(\xi, \zeta_2)d\zeta_1 d\zeta_2  +  \iint \hat\sigma(\xi, \zeta_2) \hat\sigma^\ast(\xi, \zeta_1)d\zeta_1 d\zeta_2 \\
& = \iint a(\xi, \zeta_1, \zeta_2) b^\ast(\xi, \zeta_1, \zeta_2) + b(\xi, \zeta_1, \zeta_2) a^\ast(\xi, \zeta_1, \zeta_2) d\zeta_1 d\zeta_2 \displaybreak[1]\\
& \leq \iint a(\xi, \zeta_1, \zeta_2) a^\ast(\xi, \zeta_1, \zeta_2)  + b(\xi, \zeta_1, \zeta_2) b^\ast(\xi, \zeta_1, \zeta_2) d\zeta_1 d\zeta_2\\
& = \iint \sigma(\xi, \zeta_1) \sigma^\ast(\xi, \zeta_1) (1 + |\xi|+ |\zeta_1|)^{2s} (1 + |\xi| + |\zeta_2|)^{-2s} d\zeta_1 d\zeta_2 \\
& \quad +\iint \sigma(\xi, \zeta_2) \sigma^\ast(\xi, \zeta_2) (1 + |\xi|+ |\zeta_2|)^{2s} (1 + |\xi| + |\zeta_1|)^{-2s} d\zeta_1 d\zeta_2, 
\end{align*}
where we denote by
\[a(\xi, \zeta_1, \zeta_2) = \hat \sigma(\xi, \zeta_1) (1 + |\xi|+ |\zeta_1|)^{s} (1 + |\xi| + |\zeta_2|)^{-s},\]
\[b(\xi, \zeta_1, \zeta_2) = \hat \sigma(\xi, \zeta_2) (1 + |\xi|+ |\zeta_2|)^{s} (1 + |\xi| + |\zeta_1|)^{-s}.\]
Notice that 
\[ \int (1 + |\xi| + |\zeta|)^{-2s} d\zeta = C ( 1 + |\xi|)^{k-2s} \]
for some constant $C$. Therefore, we have 
\begin{align*}
& 2 \widehat{R\sigma}(\xi) \widehat{R\sigma}^\ast(\xi) \leq 2 C ( 1 + |\xi|)^{k-2s} \iint \sigma(\xi, \zeta) \sigma^\ast(\xi, \zeta) (1 + |\xi|+ |\zeta|)^{2s} d\zeta.
\end{align*}
Equivalently, 
\[  \widehat{R\sigma}(\xi) \widehat{R\sigma}^\ast(\xi) ( 1 + |\xi|)^{2s-k} \leq  C \iint \sigma(\xi, \zeta) \sigma^\ast(\xi, \zeta) (1 + |\xi|+ |\zeta|)^{2s} d\zeta. \]
Now the lemma follows by integrating both sides with respect to $\xi$.
 \end{proof}

\begin{corollary}
Let $\Omega$ be a bounded domain in $\mathbb R^n$ with $C^\infty$ boundary $\partial \Omega$ and $\ell \geq 1$. Define the restriction map 
\[ R(\sigma) = \sigma|_{\partial \Omega} : C^\ell(\overline\Omega, \mathcal V) \to C^\ell(\partial\Omega, \mathcal V).\]
Then 
\[ \lcn R\sigma, R\sigma\rcn_{\ell-1/2} \leq C \lcn \sigma, \sigma \rcn_\ell\]
for all $\sigma\in \vh^\ell(\Omega, \mathcal V)$.

\end{corollary}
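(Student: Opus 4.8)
The plan is to localize with a partition of unity and reduce to the flat half-space estimate proved in the preceding lemma (applied with $k = 1$). First I would choose a finite open cover $\overline\Omega \subset U_0 \cup U_1 \cup \cdots \cup U_m$ such that $\overline{U_0} \subset \Omega$ and, for each $j \geq 1$, there is a $C^\infty$ diffeomorphism $\varphi_j$ from a neighborhood of $\overline{U_j} \cap \overline\Omega$ onto an open subset of the closed half-space $\overline{\mathbb R^n_+} = \{(y, z) : y \in \mathbb R^{n-1},\ z \geq 0\}$ carrying $U_j \cap \partial\Omega$ into $\{z = 0\} \cong \mathbb R^{n-1}$ and $U_j \cap \Omega$ into $\{z > 0\}$; shrinking the cover if necessary, the bundle $\mathcal V$ may be assumed trivial over each $U_j$. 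Fix a smooth partition of unity $\{\psi_j\}_{j=0}^m$ subordinate to $\{U_j\}$ and write $\sigma = \sum_{j} \psi_j \sigma$. It suffices to prove the estimate for $\sigma$ smooth, the general case following by density of $\Gamma^\infty$ in $\vh^\ell$.

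For $j = 0$ the section $\psi_0\sigma$ vanishes near $\partial\Omega$, so $R(\psi_0\sigma) = 0$ contributes nothing. For $j \geq 1$, transport $\psi_j\sigma$ to $\overline{\mathbb R^n_+}$ via $\varphi_j$ and the bundle trivialization; call the result $\tau_j$, a compactly supported section of the trivial bundle $\mathbb R^n_+ \times \mathcal A$ of class $\vh^\ell$. Using a standard finitely-reflected (Hestenes--Seeley type) bounded extension operator $\vh^\ell(\mathbb R^n_+, \mathcal A) \to \vh^\ell(\mathbb R^n, \mathcal A)$ — whose boundedness in the $\mathcal A$-valued setting follows verbatim from the classical Fourier-side estimates, since no order structure on $\mathcal A$ is needed for a norm bound — extend $\tau_j$ to $\widetilde\tau_j$ on $\mathbb R^n$ with $\lcn \widetilde\tau_j, \widetilde\tau_j\rcn_\ell \leq C \lcn \tau_j, \tau_j\rcn_\ell$. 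Applying the previous lemma with $k = 1$ and $s = \ell > 1/2$ gives $\lcn R\widetilde\tau_j, R\widetilde\tau_j \rcn_{\ell - 1/2} \leq C\lcn \widetilde\tau_j, \widetilde\tau_j\rcn_\ell$, and $R\widetilde\tau_j = R\tau_j$ is exactly the push-forward of $R(\psi_j\sigma)$.

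It then remains to compare the localized Sobolev norms on $\Omega$ and on $\partial\Omega$ with the corresponding flat norms on $\mathbb R^n_+$ and on $\mathbb R^{n-1}$: a $C^\infty$ diffeomorphism together with a change of trivialization induces a bounded operator between the relevant $\vh^s$ modules. For integer order this follows from the pseudodifferential estimates of the appendix (Lemma \ref{lemma:bbd} applied to the pulled-back elliptic operators $(1+\Delta)^\ell$, together with Lemma \ref{lemma:garding}), and the same reasoning handles the half-integer order $\ell - 1/2$ on $\partial\Omega$ once one observes that $(1 + \Delta_{\partial\Omega})^{\ell - 1/2}$ is an elliptic pseudodifferential operator of order $2\ell - 1$. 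Finally, summing over $j$ and using that for finitely many sections $\lcn \sum_j \mu_j, \sum_j \mu_j\rcn_s \leq (m+1)\sum_j \lcn \mu_j, \mu_j \rcn_s$ — a consequence of $a^\ast b + b^\ast a \leq a^\ast a + b^\ast b$ in $\mathcal A$ — yields $\lcn R\sigma, R\sigma\rcn_{\ell - 1/2} \leq C\lcn \sigma,\sigma\rcn_\ell$.

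The main obstacle I anticipate is the bookkeeping around diffeomorphism- and trivialization-invariance of the $\mathcal A$-valued Sobolev norms, and in particular verifying that the half-integer boundary norm transforms with only a bounded distortion; everything else is a routine adaptation of the classical localization argument, with the inequality $a^\ast b + b^\ast a \leq \lambda a^\ast a + \lambda^{-1} b^\ast b$ standing in for the triangle inequality whenever cross terms appear.
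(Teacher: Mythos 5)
Your proposal is correct and is precisely the standard localization argument (partition of unity, boundary flattening, Hestenes--Seeley extension, then the flat trace lemma with $k=1$) that the paper leaves entirely implicit — the corollary is stated there without proof as an immediate consequence of the preceding lemma. The points you flag as needing care (the $\mathcal A$-valued boundedness of the extension operator and the diffeomorphism invariance of the half-integer boundary norm via the appendix's pseudodifferential estimates) are exactly the right ones, and your handling of them is sound.
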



\end{document}